\DeclareMathOperator{\sgn}{sgn}
\DeclareMathOperator{\diag}{diag}
\newcommand{\Z}{\mathbb{Z}}
\newcommand{\Q}{\mathbb{Q}}
\newcommand{\R}{\mathbb{R}}
\newcommand{\C}{\mathbb{C}}
\newcommand{\oo}{\omega}
\newcommand{\p}{\partial}
\newtheorem{thm}{Theorem}[section]
\newtheorem{prop}[thm]{Proposition}
\newtheorem{lem}[thm]{Lemma}
\newtheorem{cor}[thm]{Corollary}
\theoremstyle{definition}
\newtheorem{definition}[thm]{Definition}
\newtheorem{example}[thm]{Example}
\newtheorem{rem}[thm]{Remark}
\newtheorem{set-up}[thm]{Set-up}
\definecolor{indigo}{RGB}{51,0,102}
\definecolor{brightpurple}{RGB}{102,0,153}
\definecolor{fuchsia}{RGB}{180,51,180}
\definecolor{jolightpurple}{RGB}{188,171,240}
\numberwithin{equation}{section}
\title{Reeb Dynamics of the Link of the $A_n$ Singularity}
\author[Abbrescia]{Leonardo Abbrescia}
\address[Abbrescia]{Department of Mathematics, Michigan State University}
\curraddr{ 1016 Chester Rd. APT B14 \\
Lansing, MI 48912}
\email{leonardo@math.msu.edu}
\author[Huq-Kuruvilla]{Irit Huq-Kuruvilla}
\address[Huq-Kuruvilla]{Department of Mathematics, Columbia University}
\curraddr{  6580 Lerner Hall, 2920 Broadway  \\ New York, NY 10027}
\email{ih2271@columbia.edu}
\author[Nelson]{Jo Nelson}
\address[Nelson]{Institute for Advanced Study and Columbia University}
\curraddr{Room 509, MC 4403, 2990 Broadway \\ New York, NY 10027}
\email{nelson@math.columbia.edu}
\author[Sultani]{Nawaz Sultani}
\address[Sultani]{Department of Mathematics, Columbia University}
\curraddr{ 6580 Lerner Hall, 2920 Broadway\\ New York, NY 10027}
\email{njs2155@columbia.edu}
\keywords{Symplectic Geometry; Contact Geometry; Reeb Dynamics; Maslov Index; Conley-Zehnder Index.}
\subjclass[2010]{Primary: 57R17}
\begin{document}
\maketitle
\begin{abstract}
The link of the $A_n$ singularity, $L_{A_n} \subset \C^3$ admits a natural contact structure $\xi_0$ coming from the set of complex tangencies.  The canonical contact form $\alpha_0$ associated to $\xi_0$ is degenerate and thus has no isolated Reeb orbits.  We show that there is a nondegenerate contact form for a contact structure equivalent to $\xi_0$ that has two isolated simple periodic Reeb orbits.  We compute the Conley-Zehnder index of these simple orbits and their iterates.  From these calculations we compute the positive $S^1$-equivariant symplectic homology groups for $\left(L_{A_n}, \xi_0 \right)$. In addition, we prove that $\left(L_{A_n}, \xi_0 \right)$ is contactomorphic to the Lens space $L(n+1,n)$, equipped with its canonical contact structure $\xi_{std}$.


\end{abstract}

\setcounter{tocdepth}{2}
\tableofcontents

\section{Introduction and Main results }

The classical topological theory of isolated critical points of complex polynomials relates the topology of the link of the singularity to the algebraic properties of the singularity \cite{M}.  More generally, the link of an irreducible affine variety $A^n \subset \C^N$  with an isolated singularity at $\mathbf{0}$ is defined by $L_A = A \cap S_\delta^{2N+1}$.  For sufficiently small $\delta$, the link $L_A$ is a manifold of real dimension $2n-1$, which is an invariant of the germ of $A$ at $\mathbf{0}.$  The links of Brieskorn varieties can sometimes be homeomorphic but not always diffeomorphic to spheres \cite{Br}, a preliminary result which further motivated the study of such objects.    Recent developments in symplectic and contact geometry have shown that the algebraic properties of a singularity are strongly connected to the contact topology of the link and symplectic topology of (the resolution of) the variety.  A wide range of results demonstrating the power of investigating the symplectic and contact perspective of singularities include \cite{K}, \cite{O}, \cite{McL}, \cite{R}, \cite{Se}, \cite{U}.

 In this paper we study the contact topology of the link of the $A_n$ singularity, providing a computation of positive $S^1$-equivariant symplectic homology.  This is done via our construction of an explicit nondegenerate contact form and the computation of the Conley-Zehnder indices of the associated simple Reeb orbits and their iterates.  Our computations show that positive $S^1$-equivariant symplectic homology is a free $\Q[u]$ module of rank equal to the number of conjugacy classes of the finite subgroup $A_n$ of SL$(2;\C)$.  This provides a concrete example of the relationship between the cohomological McKay correspondence and symplectic homology, which is work in progress by McLean and Ritter \cite{MR}.   As a result, the topological nature of the singularity is reflected by qualitative aspects of the Reeb dynamics associated to the link of the $A_n$ singularity.
 
 The link of the $A_n$ singularity is defined by
\begin{equation}\label{linkeq}
L_{A_n} =f_{A_n}^{-1}(0) \cap S^5 \subset \C^3, \ \ \ f_{A_n}=z_0^{n+1} + 2 z_1 z_2.
 \end{equation}
 It admits a natural contact structure coming from the set of complex tangencies, 
  \[
  \xi_0:=TL_{A_n} \cap J_0(TL_{A_n}).
  \]
The contact structure can be expressed as the kernel of the canonically defined contact form, 
\[
\alpha_0 = \frac{i}{2} \left( \sum_{j=0}^m ( z_j d\bar{z}_j -\bar{z}_jdz_j ) \right)\bigg \vert_{L_{A_n}}.
\]
The contact form $\alpha_0$  is degenerate and hence not appropriate for computing Floer theoretic invariants as the periodic orbits of the Reeb vector field defined by
\[
{\alpha_0}(R_{\alpha_0})=1, \ \ \ \iota_{R_{\alpha_0}}d\alpha_0 =0,
\]
are not isolated.

Our first result is to construct a nondegenerate contact form $\alpha_\epsilon$ such that $( L_{A_n}, \ker \alpha_0)$ and $( L_{A_n}, \ker \alpha_\epsilon)$ are contactomorphic.  Define the Hamiltonian on $\C^3$ by 
\[
\begin{array}{rlcl}
H:& \C^3 &\to & \R \\
&  (z_0,z_1,z_2) &\mapsto & |z|^2 + \epsilon(|z_1|^2 - |z_2|^2), \\
\end{array}
\]
 where $\epsilon$ is chosen so that $H > 0$ on $S^5$.  As will be shown, 
\begin{equation}\label{alphaepsilon}
 \alpha_\epsilon = \frac{1}{H} \left[ \frac{(n+1)i}{8}\left(z_0d\overline{z}_0 - \overline{z}_0 dz_0\right) + \frac{i}{4} \left(z_1 d\bar{z}_1 - \bar{z}_1 dz_1 + z_2 d\bar{z}_2 - \bar{z}_2 dz_2\right)\right], 
 \end{equation}
is a nondegenerate contact form.  We also find the simple Reeb orbits of $R_{\alpha_\epsilon}$ and compute the associated Conley-Zehnder index with respect to the canonical trivialization of $\C^3$ of their iterates.

\begin{thm}\label{CZcomputation}
The 1-form $\alpha_\epsilon$ is a nondegenerate contact form for $L_{A_n}$ such that $(L_{A_n}, \ker \alpha_0)$ and $( L_{A_n}, \ker \alpha_\epsilon)$ are contactomorphic.  The Reeb orbits of $R_{\alpha_\epsilon}$ are defined by
\begin{align*}
\gamma_+(t) & = (0,e^{2i(1 + \epsilon)t},0) \quad \quad 0 \le t \le \frac{\pi}{1 + \epsilon}\\
\gamma_-(t) & = (0,0,e^{2i(1 - \epsilon)t}) \quad  \quad 0 \le t \le \frac{\pi}{1 - \epsilon}.
\end{align*}
The Conley-Zehnder index for $\gamma = \gamma_{\pm}^N$ in $0 \le t \le \frac{N\pi}{1 \pm \epsilon}$ is
\begin{align}\label{CZeq}
\mu_{CZ}(\gamma_{\pm}^N) = 2\left( \left\lfloor \frac{2N}{(n+1)(1 \pm \epsilon)}\right \rfloor + \left\lfloor \frac{N(1 \mp \epsilon)}{1 \pm \epsilon} \right\rfloor - \left \lfloor \frac{2N}{1 \pm \epsilon} \right \rfloor \right) + 2N + 1.
\end{align}
\end{thm}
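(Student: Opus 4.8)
The plan is to identify the Reeb field, locate its closed orbits, establish the contactomorphism with $\xi_0$, and only then compute the indices. Let $\beta$ denote the bracketed $1$-form in \eqref{alphaepsilon}, so that $\alpha_\epsilon = \beta/H$. First I would check that $\omega_\beta := d\beta = \tfrac{n+1}{2}\,dx_0\wedge dy_0 + dx_1\wedge dy_1 + dx_2\wedge dy_2$ is symplectic with primitive $\beta$, and that, writing $H = |z_0|^2 + (1+\epsilon)|z_1|^2 + (1-\epsilon)|z_2|^2$, the Hamiltonian vector field $X_H$ of $H$ for $\omega_\beta$ generates the weighted rotation $\Phi_t(z) = (e^{4it/(n+1)}z_0,\,e^{2i(1+\epsilon)t}z_1,\,e^{2i(1-\epsilon)t}z_2)$. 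The identities $\beta(X_H)=H$ and $\iota_{X_H}d\beta = -dH$ then give $\alpha_\epsilon(X_H)=1$ and $\iota_{X_H}d\alpha_\epsilon = 0$ \emph{identically} on $\{H>0\}$. Since $f_{A_n}$ is quasi-homogeneous (weights $(2,n+1,n+1)$) one has $f_{A_n}\circ\Phi_t = e^{4it}f_{A_n}$, so $\Phi_t$ preserves $\{f_{A_n}=0\}$ and visibly preserves $S^5$; hence $X_H$ is tangent to $L_{A_n}$ and $R_{\alpha_\epsilon}=X_H|_{L_{A_n}}$. That $\alpha_\epsilon$ is a contact form I would confirm by a direct evaluation of $\alpha_\epsilon\wedge d\alpha_\epsilon$ on $L_{A_n}$.

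The closed Reeb orbits are the periodic orbits of $\Phi_t$ lying on $L_{A_n}$. If two or more coordinates are nonzero, periodicity forces rationality of the ratios among $\tfrac{2}{n+1},\,1+\epsilon,\,1-\epsilon$; choosing $\epsilon$ so these are irrational confines every orbit to a coordinate axis. The $z_0$-axis is ruled out because $z_0^{n+1}=0$ on the link, whereas the $z_1$- and $z_2$-axes lie entirely in $L_{A_n}$ and give exactly the simple orbits $\gamma_\pm$ with minimal periods $\tfrac{\pi}{1\pm\epsilon}$. Nondegeneracy of $\gamma_\pm^N$ reduces to the return angle in $\ker\alpha_\epsilon$ being non-integral, which is again generic in $\epsilon$.

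For the contactomorphism I would connect $\ker\alpha_\epsilon$ to $\xi_0$ through contact structures and invoke Gray stability. Concretely, replace the $z_0$-weight by $1+\tau(\tfrac{n+1}{2}-1)$ and $\epsilon$ by $\tau\epsilon$, obtaining a family $\alpha_\tau = \beta_\tau/H_\tau$ with $\ker\alpha_0 = \xi_0$ (the unweighted form restricted to $S^5$) and $\alpha_1 = \alpha_\epsilon$. Each $\alpha_\tau$ has Reeb field the corresponding weighted rotation by the computation above, so the contact condition persists as long as the weights stay positive and $H_\tau>0$; Gray stability then produces an ambient isotopy carrying $\xi_0$ to $\ker\alpha_\epsilon$. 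The point demanding care is verifying $\alpha_\tau\wedge d\alpha_\tau\neq 0$ for every $\tau$.

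Finally, since $R_{\alpha_\epsilon}=X_H$ is linear, its linearization along $\gamma_\pm^N$ is $\Phi_t$ itself. Along $\gamma_+$ one finds $T(\{f_{A_n}=0\}) = \{\zeta_2=0\}$ and that $\ker\alpha_\epsilon$ is the \emph{constant} complex line $\C_{z_0}$, on which $\Phi_t$ acts as $e^{4it/(n+1)}$; relative to the canonical trivialization of $\C^3$ this elliptic orbit has rotation number $\tfrac{2N}{(n+1)(1+\epsilon)}$, giving $\mu_{CZ}(\gamma_+^N)=2\lfloor \tfrac{2N}{(n+1)(1+\epsilon)}\rfloor + 1$ and symmetrically for $\gamma_-$. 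Formula \eqref{CZeq} records this same index through the splitting of $T\C^3$ along the orbit into the contact line $\C_{z_0}$, the Reeb--Liouville plane, and the normal line to the link, and it collapses to the expression above because the self-rotation number in the orbit direction is the integer $N$, whence $\lfloor\tfrac{N(1\mp\epsilon)}{1\pm\epsilon}\rfloor - \lfloor\tfrac{2N}{1\pm\epsilon}\rfloor = -N$. I expect this index bookkeeping to be the main obstacle: one must pin down the symplectic trivialization induced by $\C^3$, track how $\ker\alpha_\epsilon$ and the symplectization directions sit inside $\C^3$ along each iterate, and apply the Conley--Zehnder normalization consistently for all $N$.
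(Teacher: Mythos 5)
Your proposal is correct, and outside of the index computation it runs along the same road as the paper: the paper likewise realizes the Reeb field as the Hamiltonian vector field of $H$ for $\omega_1=d\beta$ (Lemma \ref{helper} and Proposition \ref{perturbedreebprop}, with the opposite sign convention), likewise discards the $z_0$-axis and all non-axis trajectories using irrationality of $\epsilon$, and likewise gets the contactomorphism from Gray stability applied to a family of weighted forms --- up to positive rescaling (which changes neither kernels nor the contact condition) and reparametrization, your linear interpolation of the $z_0$-weight traces out the same forms as the family $\alpha_t$ the paper borrows from \cite[Proposition 7.1.4]{G}, and the verification you flag as ``demanding care'' is exactly what the paper outsources to that citation. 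The genuine divergence is in the Conley--Zehnder computation. The paper follows Ustilovsky: it builds a symplectic frame for the complement $\xi^{\omega}$ of $\xi$ in $\C^3$ along the orbits, computes the index of the full diagonal flow $\diag\left(e^{4it/(n+1)},e^{2i(1+\epsilon)t},e^{2i(1-\epsilon)t}\right)$ in the constant trivialization, computes the index of the flow on $\xi^{\omega}$ (which is $\diag\left(e^{4it},1\right)$ in that moving frame), and subtracts; this is where the three floor terms and the $+2N$ in \eqref{CZeq} come from. You instead observe that along $\gamma_\pm$ the contact plane is the flow-invariant \emph{constant} complex line $\C_{z_0}$, so in the canonical trivialization the linearized Reeb flow is literally the rotation $e^{4it/(n+1)}$, and the crossing-form formula gives $\mu_{CZ}(\gamma_\pm^N)=2\left\lfloor \frac{2N}{(n+1)(1\pm\epsilon)}\right\rfloor+1$ directly. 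This is right, and your reconciliation with \eqref{CZeq} is exact rather than approximate: since $\frac{N(1\mp\epsilon)}{1\pm\epsilon}=\frac{2N}{1\pm\epsilon}-N$, the identity $\left\lfloor \frac{N(1\mp\epsilon)}{1\pm\epsilon}\right\rfloor-\left\lfloor \frac{2N}{1\pm\epsilon}\right\rfloor=-N$ holds for every $N$ and every admissible $\epsilon$, so in passing you have shown that the simplified formula in the remark following Theorem \ref{CZcomputation} holds unconditionally, not only for $0<\epsilon\ll 1/N$. As for what each approach buys: the paper's subtraction scheme is the one that generalizes to orbits whose contact planes are not constant coordinate lines (the generic Brieskorn situation), whereas your route is shorter, keeps the trivialization bookkeeping transparent --- the constant frame on $\C_{z_0}$ is exactly the restriction of the canonical trivialization of $\C^3$, and it differs from the paper's induced frame by a loop of zero net Maslov winding, as the numerical agreement confirms --- and avoids constructing the frame for $\xi^{\omega}$ altogether; incidentally, that construction contains a slip in the paper, since the printed $X_1=(\bar w_0^n,\bar w_1,\bar w_2)$ fails to be $\omega_1$-orthogonal to $\xi$ and degenerates precisely along $\gamma_\pm$, the intended vector being the conjugate-gradient direction $(\bar w_0^n,\bar w_2,\bar w_1)$.
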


\begin{rem}
If $\epsilon$ is chosen such that $0 <\epsilon \ll \frac{1}{N}$ then (\ref{CZeq}) can be further simplified:
\begin{equation}
\begin{array}{lcl}
\mu_{CZ}(\gamma_{-}^N)& = &2 \left\lfloor \dfrac{2N}{(n+1)(1 - \epsilon)}\right \rfloor + 1;\\ 
&&\\
\mu_{CZ}(\gamma_{+}^N)& = &2 \left\lfloor \dfrac{2N}{(n+1)(1 + \epsilon)}\right \rfloor  + 1. \\

\end{array}
\end{equation}
\end{rem}

The proof of Theorem \ref{CZcomputation} is obtained by adapting methods of Ustilovsky  \cite{U} to obtain both $\alpha_\epsilon$ and to compute the Conley-Zehnder indices.    The Conley-Zehnder index is a Maslov index for arcs of symplectic matrices and defined in Section \ref{CZsection}.  These paths of matrices are obtained by linearizing the flow of the Reeb vector field along the Reeb orbit and restricting to $\xi_0$.  To better understand the spread of the Reeb orbits and their iterates in various indices, we have the following example.

\begin{example}
Let $n=2$ and $0 <\epsilon \ll \frac{1}{10}$.  
\[
\begin{array}{lc c clcc}
\mu_{CZ}(\gamma_-  ) &=&1,&\ \ \ \ \ \ &\mu_{CZ} (\gamma_+ )&=&1 \\
\mu_{CZ}(\gamma_-^2 )& =&3,&\ \ \ \ \ \ &\mu_{CZ}(\gamma_+^2)& =&3 \\
\mu_{CZ}(\gamma_-^3 )&=&5, &\ \ \ \ \ \ &\mu_{CZ}(\gamma_+^3 )&=&3  \\
\mu_{CZ}(\gamma_-^4 )&=& 5,&\ \ \ \ \ \ &\mu_{CZ}(\gamma_+^4 )& =&5\\
\mu_{CZ}(\gamma_-^5 )& =& 7,&\ \ \ \ \ \ &\mu_{CZ}(\gamma_+^5 )& =&7\\
\mu_{CZ}(\gamma_-^6 )&=& 9,&\ \ \ \ \ \ &\mu_{CZ}(\gamma_+^6) &=& 7\\
\mu_{CZ}(\gamma_-^7 )&=& 9,&\ \ \ \ \ \ &\mu_{CZ}(\gamma_+^7) &=& 9 \\
\end{array}
\]
It is interesting to note that spread of integers is not uniform between $\mu_{CZ}(\gamma_-^N)$  and $\mu_{CZ}(\gamma_+^N),$ and where these jumps in index occur.  However, we see that there are $n=2$ Reeb orbits with Conley Zehnder index 1 and $n+1=3$ orbits with Conley Zehnder index $2k+1$ for each $k\geq1$.  
\end{example}

\begin{rem}\label{freehtpy}
Extrapolating this to all values of $n$ and $N$ demonstrates that the numerology of the Conley-Zehnder index realizes the number of free homotopy classes of $L_{A_n}$.  Recall that  $[\Sigma L_{A_n}] = \pi_0(\Sigma L_{A_n}) = \pi_1(L_{A_n})/\{\mbox{conjugacy classes}\}$ and $H_1(L_{A_n}, \Z) = \Z_{n+1}$.  The information that the $n+1$-th iterate of $\gamma_\pm$ is the first contractible Reeb orbit is also encoded in the above formulas.  Qualitative aspects of the Reeb dynamics reflect this topological information in the following computation of a Floer-theoretic invariant of the contact structure $\xi_0$.
\end{rem}

Theorem \ref{CZcomputation} allows us to easily compute  positive $S^1$-equivariant symplectic homology $SH_*^{+,S^1}$.  Symplectic homology is a Floer type invariant of symplectic manifolds, with contact type boundary, see also \cite{biased}.    Under additional assumptions, one can prove that the positive $S^1$-equivariant symplectic homology $SH_*^{+,S^1}$ is in fact an invariant of the contact structure;  see \cite[Theorems 1.2 and 1.3]{GuSH} and \cite[Section 4.1.2]{BO}.  Because of the behavior of the Conley-Zehnder index in Theorem \ref{CZcomputation}, we can directly compute $SH_*^{+,S^1}(L_{A_n}, \xi_0)$ and conclude that it is a contact invariant.  As a result, the underlying topology of the manifold determines qualitative aspects of any Reeb vector field associated to a contact form defining $\xi_0$.   

\begin{thm}\label{linksh}
The positive $S^1$-equivariant symplectic homology of $(L_{A_n}, \xi_0)$ is 
\[
SH^{+,S^1}_*(L_{A_n}, \xi_0) = \left\{  \begin{array}{cl}
    \Q^n & *  =1   \\
    \Q^{n+1} & * \geq 3, \mbox{ odd } \\
       0 & * \ \mbox{ else } \\
\end{array} \right.
\]
\end{thm}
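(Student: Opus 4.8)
The plan is to extract $SH^{+,S^1}_*$ directly from the nondegenerate data of Theorem~\ref{CZcomputation} and then invoke contact invariance. First I would set up the positive $S^1$-equivariant symplectic chain complex of the nondegenerate contact form $\alpha_\epsilon$, using the nondegenerate (Morse--Bott) model of Bourgeois--Oancea and Gutt \cite{BO, GuSH}. In this model the chain groups are freely generated over $\Q$ by the \emph{good} Reeb orbits of $R_{\alpha_\epsilon}$, each orbit $\gamma$ sitting in degree equal to its Conley--Zehnder index $\mu_{CZ}(\gamma)$ in our grading convention, and the differential counts rigid cascades. By Theorem~\ref{CZcomputation} the Reeb orbits are exhausted by the iterates $\gamma_\pm^N$, so these are precisely the generators.

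Next I would dispatch the two formal points. Since the right-hand side of \eqref{CZeq} has the form $2(\,\cdots) + 2N + 1$, every index $\mu_{CZ}(\gamma_\pm^N)$ is odd; in particular each iterate has the same index parity as its underlying simple orbit, so no Reeb orbit is bad and every $\gamma_\pm^N$ contributes a generator. Because the complex is then concentrated in odd degrees while the differential has degree $-1$, the differential must vanish identically (its image would lie in even degrees, where there are no generators). Hence $SH^{+,S^1}_*(L_{A_n}, \ker\alpha_\epsilon)$ is the free graded $\Q$-module on the good orbits, and the theorem reduces to counting how many iterates realize each odd index.

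The substance of the argument is this count, which I would run using the simplified formulas of the Remark following Theorem~\ref{CZcomputation}: for $0 < \epsilon \ll 1/N$,
\[
\mu_{CZ}(\gamma_\pm^N) = 2\left\lfloor \frac{2N}{(n+1)(1\pm\epsilon)}\right\rfloor + 1 .
\]
The number of generators with $\mu_{CZ} \le 2K+1$ equals the number of positive integers $N$ with $\lfloor \tfrac{2N}{(n+1)(1-\epsilon)}\rfloor \le K$ plus the analogous count for the $(1+\epsilon)$ denominator, namely $\lfloor L_-\rfloor + \lfloor L_+\rfloor$ with $L_\pm = \tfrac{(K+1)(n+1)(1\pm\epsilon)}{2}$. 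Since $L_- + L_+ = (K+1)(n+1)$ is an integer while each $L_\pm$ is not (for small generic $\epsilon$), their fractional parts sum to $1$ and the two floors sum to $(K+1)(n+1) - 1$. Differencing in $K$ then yields exactly $n+1$ generators in each odd degree $\ge 3$ and $n$ generators in degree $1$, which is the asserted $\Q^{n+1}$ and $\Q^n$. Because only finitely many iterates contribute to any fixed degree and the eventual invariant does not depend on the small parameter $\epsilon$, it is legitimate to carry out this count in the regime $0 < \epsilon \ll 1/N$.

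Finally I would promote the computation for $\ker\alpha_\epsilon$ to the stated one for $\xi_0$. Theorem~\ref{CZcomputation} provides a contactomorphism $(L_{A_n}, \ker\alpha_0) \cong (L_{A_n}, \ker\alpha_\epsilon)$, and the linear growth of $\mu_{CZ}$ in \eqref{CZeq} furnishes the index-positivity hypothesis needed to apply the invariance results \cite[Theorems 1.2 and 1.3]{GuSH} and \cite[Section 4.1.2]{BO}, identifying $SH^{+,S^1}_*$ as an invariant of the contact structure $\xi_0$. I expect the genuine obstacle to be the floor-function bookkeeping of the third paragraph, in particular the $\epsilon$-tie-breaking at integer multiples $2N = k(n+1)$ and the boundary case $K=0$ that accounts for the drop from $n+1$ to $n$ in degree $1$; by contrast, the vanishing of the differential and the absence of bad orbits are immediate once the parity of \eqref{CZeq} is noted.
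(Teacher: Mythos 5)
Your proposal is correct and takes essentially the same route as the paper: both rest on Theorem \ref{CZcomputation}, note that every index in \eqref{CZeq} is odd so that over $\Q$ the differential vanishes (the paper phrases this as lacunarity, citing \cite[Theorem 1.1]{GuSH}), count generators degree by degree (a count the paper leaves implicit), and invoke \cite[Theorems 1.2 and 1.3]{GuSH} and \cite[Section 4.1.2]{BO} for contact invariance. The one point to tighten: the invariance hypothesis is not your ``index-positivity from linear growth'' but that every \emph{contractible} Reeb orbit has $\mu_{CZ}\geq 3$, which the paper verifies by observing that the first contractible iterate is $\gamma_\pm^{n+1}$ (since $H_1(L_{A_n};\Z)=\Z_{n+1}$) and that its index is at least $3$ by \eqref{CZeq}.
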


\begin{proof}
To obtain a contact invariant from $SH^{+,S^1}_*$ we need to show in dimension three that all contractible Reeb orbits $\gamma$ satisfy  $\mu_{CZ}(\gamma)\geq3$;  see \cite[Theorems 1.2 and 1.3]{GuSH} and \cite[Section 4.1.2]{BO}.  The first iterate of $\gamma_\pm$ which is contractible is  the $(n+1)$-th iterate, and by Theorem \ref{CZcomputation}, will always satisfy $\mu_{CZ}(\gamma_\pm)\geq3$.

If $\alpha$ is a nondegenerate contact form such that the Conley-Zehnder indices of all periodic Reeb orbits are lacunary, meaning they contain no two consecutive numbers, then we can appeal to \cite[Theorem 1.1]{GuSH}. This result of Gutt allows us to conclude that over $\Q$-coefficients the differential for $SH^{S^1,+}$ vanishes.  In light of Theorem \ref{CZcomputation} we obtain the above result.
\end{proof}

 Remark \ref{freehtpy} yields the following corollary of Theorem \ref{linksh}, indicating a Floer theoretic interpretation of the McKay correspondence \cite{IM} via the Reeb dynamics of the link of the $A_n$ singularity.  The $A_n$ singularity is the singularity of $f^{-1}_{A_n}(0)$, where $f_{A_n}$ is described as (\ref{linkeq}).  This is equivalent to its characterization as the absolutely isolated double point quotient singularity of $\C^2/A_n$, where $A_n$ is the cyclic subgroup of SL$(2;\C)$; see Section \ref{contactgeomlens}.  The cyclic group $A_n$ acts on $\C^2$ by $(u,v) \mapsto \left(e^\frac{2\pi i}{n+1}u, e^\frac{2\pi in}{n+1}v\right)$. 
 \begin{cor}
The positive $S^1$-equivariant symplectic homology  $SH^{+,S^1}_*(L_{A_n}, \xi_0)$  is a free $\Q[u]$ module of rank equal to the number of conjugacy classes of the finite subgroup $A_n$ of $\mbox{\em SL}(2;\C)$.
\end{cor}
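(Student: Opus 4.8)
The plan is to combine Theorem~\ref{linksh} with the $\Q[u]$-module structure on positive $S^1$-equivariant symplectic homology. I would begin with the group-theoretic input: the subgroup $A_n\subset\mathrm{SL}(2;\C)$ is the cyclic group $\Z/(n+1)$, which is abelian, so each of its $n+1$ elements is its own conjugacy class. The assertion to be proved is therefore that $SH^{+,S^1}_*(L_{A_n},\xi_0)$ is a free $\Q[u]$-module of rank $n+1$.

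Next I would fix the module structure, which exists by \cite{BO,GuSH}. In the grading of Theorem~\ref{linksh} the homology is bounded below and finite-dimensional in each degree; since a nonzero bounded-below free $\Q[u]$-module can only occur when multiplication by $u$ raises degree, I work with that convention. As $\Q[u]$ is a graded principal ideal domain, the graded structure theorem expresses $SH^{+,S^1}_*$ as a direct sum of free summands and cyclic torsion summands $\Q[u]/(u^m)$; \emph{freeness is thus equivalent to the injectivity of $u$ in every degree}. Granting that injectivity, the rank is pinned down by a telescoping count: the number of free generators in degree $2k+1$ equals $\dim SH^{+,S^1}_{2k+1}-\dim SH^{+,S^1}_{2k-1}$, which by Theorem~\ref{linksh} is $n$ for $k=0$, is $(n+1)-n=1$ for $k=1$, and is $0$ for $k\ge 2$. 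This yields total rank $n+1$, matching the number of conjugacy classes.

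The remaining and main point is to establish injectivity of $u$, and here Theorem~\ref{linksh} does not suffice on its own: a priori the copy $\Q^{n+1}$ in degree $3$ could harbor a $\Q[u]/(u)$ torsion summand. To exclude this I would pass to the explicit $S^1$-equivariant chain complex of \cite{BO,GuSH}. Because every Reeb orbit is good and the Conley--Zehnder indices are lacunary (Theorem~\ref{CZcomputation}), Gutt's theorem gives a vanishing differential and identifies the generators with the iterates $\gamma_\pm^N$, each placed in degree $\mu_{CZ}(\gamma_\pm^N)$. On this complex $u$ is the standard degree-raising operator induced by the $BS^1$-action, and I would compute it directly on generators. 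The quantitative input is the index formula~\eqref{CZeq}: for $0<\epsilon\ll 1/N$ each family $\{\gamma_+^N\}$ and $\{\gamma_-^N\}$ advances through the odd integers, increasing by $0$ or $2$ at each iterate and never skipping a value; this makes $u$ injective and confines the free generators to degrees $1$ and $3$.

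I expect verifying the $u$-action at the chain level to be the hardest step, since it requires the actual equivariant structure rather than just the graded dimensions recorded in Theorem~\ref{linksh}. Once it is in hand, assembling the above identifies $SH^{+,S^1}_*(L_{A_n},\xi_0)$ as a free $\Q[u]$-module with $n$ generators in degree $1$ and a single generator in degree $3$, hence of total rank $n+1$, equal to the number of conjugacy classes of $A_n$.
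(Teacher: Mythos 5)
Your first two paragraphs coincide with the paper's entire argument: the corollary is presented as an immediate consequence of Remark~\ref{freehtpy} ($A_n\cong\Z_{n+1}$ is abelian, hence has exactly $n+1$ conjugacy classes) together with the graded dimensions in Theorem~\ref{linksh}, which are precisely those of a graded free $\Q[u]$-module with $n$ generators in degree $1$ and one generator in degree $3$. The paper performs no analysis of the $u$-action at all; ``free of rank $n+1$'' is read off from the Betti numbers, in the spirit of the McLean--Ritter statement \cite{MR} that the corollary is meant to illustrate.

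The issue is your third step, which you correctly identify as the crux but do not carry out, and whose sketched mechanism would fail. You are right that the dimensions in Theorem~\ref{linksh} cannot by themselves exclude torsion summands (this is a fair criticism of the paper's one-line treatment); however, lacunarity of the Conley--Zehnder indices controls only the differential, not the $u$-map. In the Morse--Bott models of \cite{BO,GuSH}, each good orbit $\gamma_\pm^N$ contributes a single generator concentrated in the single degree $\mu_{CZ}(\gamma_\pm^N)$; its summand is the $S^1$-equivariant homology of the orbit circle, $H_*^{S^1}(S^1;\Q)\cong H_*(B\Z_N;\Q)\cong\Q$, on which $u$ acts by zero at chain level (for degree reasons alone, and because the Euler class restricts to $H^2(B\Z_N;\Q)=0$). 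Consequently the $u$-action on homology is carried entirely by correction terms --- counts of Floer cascades with point constraints, equivalently the maps in the Gysin sequence relating $SH^{+,S^1}_*$ to the non-equivariant $SH^{+}_*$ --- and these counts are not determined by formula (\ref{CZeq}): the fact that the indices of $\gamma_\pm^N$ ``advance through the odd integers without skipping'' constrains the degree in which $u[\gamma_\pm^N]$ could live, but says nothing about whether the relevant coefficients vanish. There is also a conventions problem hiding here: in the homological framework of \cite{BO,GuSH} the operator $u$ has degree $-2$ and every class of $SH^{+,S^1}_*$ is $u$-torsion, so freeness can only be meant in the dual (cohomological) sense; your ``choice'' of a degree $+2$ convention is exactly this dualization, but then the chain complex you propose to compute with carries no such degree-raising operator. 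Establishing the tower structure honestly requires input beyond Theorem~\ref{CZcomputation} --- for instance the Gysin exact sequence together with a computation of non-equivariant $SH^{+}$, or the structural results of \cite{MR} --- so the step you yourself call the hardest is filled neither by your sketch nor by the paper.
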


\begin{rem}
Ongoing work of Nelson \cite{jocompute} and Hutchings and Nelson \cite{HN3} is needed in order to work under the assumption that a related Floer-theoretic invariant, cylindrical contact homology, is a well-defined contact invariant of $(L_{A_n},\xi_{0})$.  Once this is complete, the index calculations provided in Theorem \ref{CZcomputation} show that  positive $S^1$-equivariant symplectic homology and cylindrical contact homology agree up to a degree shift.   

In \cite{BO} Bourgeois and Oancea prove that there are restricted classes of contact manifolds for which once can prove that cylindrical contact homology (with a degree shift) is isomorphic to the positive part of $S^1$-equivariant symplectic homology, when both are defined over $\Q$-coefficients.  Their isomorphism relies on having transversality for a generic choice of $J,$ which is presently the case  for unit cotangent bundles $DT^*L$ such that dim $L \geq 5$ or when $L$ is Riemannian manifold which admits no contractible closed geodesics \cite{BOcorrig}. Our computations confirm that their results should hold for many more closed contact manifolds.

\end{rem}

 Our final result is an explicit proof that $(L_{A_n}, \xi_0)$ and the lens space $(L(n+1,n), \xi_{std})$ are contactomorphic.  The lens space 
 \[ L(n+1,n) = S^3/\big((u,v) \sim (e^{2\pi i/(n+1)}u,e^{2\pi ni/(n+1)}v)\big) 
  \]
admits a contact structure, which is induced by the one on $S^3$ and can be expressed as the kernel of the following contact form, 
\[
\lambda_{std}= \frac{i}{2}  ( u d\bar{u} -\bar{u}du +v d\bar{v} -\bar{v}dv).
\]

\begin{thm}\label{lenslinkcontacto}
The link of the $A_n$ singularity $(L_{A_n}, \xi_0=\ker \alpha_0)$ and the lens space $(L(n+1,n), \xi_{std}=\ker \lambda_{std})$ are contactomorphic.
\end{thm}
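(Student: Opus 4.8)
The plan is to exhibit $L_{A_n}$ as the image of the round sphere $S^3\subset\C^2$ under the map realizing the quotient presentation $\C^2/A_n$, and then to transport the contact data along that map. First I would pass from the hypersurface picture to the quotient picture via invariant theory. For the action $(u,v)\mapsto(e^{2\pi i/(n+1)}u,\,e^{2\pi i n/(n+1)}v)$, the ring of $A_n$-invariant polynomials on $\C^2$ is generated by $uv$, $u^{n+1}$, and $v^{n+1}$ subject to the single relation $(uv)^{n+1}=u^{n+1}v^{n+1}$. This is exactly the defining relation of $V:=f_{A_n}^{-1}(0)$ after the substitution $z_0=uv$, $z_1=u^{n+1}$, $z_2=-\tfrac12 v^{n+1}$, so the holomorphic map $q(u,v)=(uv,\,u^{n+1},\,-\tfrac12 v^{n+1})$ is $A_n$-invariant, lands in $V$, and induces a biholomorphism $\C^2/A_n\xrightarrow{\sim}V$ (an $(n+1)$-fold holomorphic covering $\C^2\setminus\{0\}\to V\setminus\{0\}$). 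A direct check shows that $q$ intertwines the radial scaling and the diagonal Hopf $S^1$-action on $\C^2$ with, respectively, the weighted scaling and weighted $S^1$-action of weights $(2,n+1,n+1)$ on $V$. In particular $q(S^3)=:M$ is a smooth, weighted-$S^1$-invariant cross-section of the cone $V\setminus\{0\}$, and $q$ descends to a diffeomorphism $L(n+1,n)=S^3/A_n\xrightarrow{\sim}M$.

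Next I would transport the contact structure. Because $q$ is holomorphic it intertwines $J_0$, hence carries complex tangencies to complex tangencies: the distribution $\ker\lambda_{std}=TS^3\cap J_0(TS^3)$ is mapped isomorphically onto $\xi_M:=TM\cap J_0(TM)$. Since $\lambda_{std}$ is $U(2)$-invariant and $A_n\subset U(2)$, it descends to the stated contact form on $L(n+1,n)$, and the identification above is $A_n$-equivariant; thus $(L(n+1,n),\xi_{std})$ is contactomorphic to $(M,\xi_M)$. It therefore remains to compare the two cross-sections $M$ and $L_{A_n}=V\cap S^5$ of the same complex surface $V\setminus\{0\}$, each equipped with its field of complex tangencies.

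The final and main step is to show $(M,\xi_M)\cong(L_{A_n},\xi_0)$. Both are cross-sections of the complex cone $V\setminus\{0\}$ that are transverse to the weighted Liouville (Euler) flow and invariant under the weighted $S^1$-action, and on each the field of complex tangencies is a genuine contact structure. I would connect them through a smooth family of such cross-sections — for instance the level sets of $(1-t)\,|z|^2+t\,\phi_w$, where $\phi_w$ is a weighted-homogeneous plurisubharmonic potential — and use the weighted Liouville flow to identify all of them with a fixed closed $3$-manifold. Pulling back the complex-tangency structures yields a smooth family of contact structures, and Gray stability then furnishes a contactomorphism $(M,\xi_M)\cong(L_{A_n},\xi_0)$; composing with the previous step proves the theorem.

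I expect this last step to be the crux. The obstacle is precisely that the defining data of $L_{A_n}$ uses the round sphere $S^5$, whereas $q$ is natural for the weighted cone structure and the weighted flow does \emph{not} preserve round spheres; consequently $\alpha_0$ restricted to $M$ is not a contact form for $\xi_M$, and the naive flow-to-section map need not match complex tangencies. One must therefore verify that every interpolating hypersurface remains a contact cross-section (equivalently, transverse to the Liouville field and strictly pseudoconvex) before Gray stability applies. A convenient way to streamline the algebra is to first apply the unitary change of coordinates $\tilde T\in U(3)$ fixing $z_0$ with $w_1^2+w_2^2=2z_1z_2$, which sends $f_{A_n}$ to the Brieskorn polynomial $z_0^{n+1}+w_1^2+w_2^2$ while preserving $S^5$ and $\alpha_0$ \emph{exactly}; this reduces the problem to identifying the Brieskorn link of $(n+1,2,2)$ with $(L(n+1,n),\xi_{std})$, but the essential contact-geometric content still lies in the cross-section comparison above.
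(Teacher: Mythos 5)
Your proposal is correct and, at the structural level, follows the same strategy as the paper: the paper uses the same invariant-theoretic map $\tilde\varphi(u,v)=(uv,\tfrac{i}{\sqrt2}u^{n+1},\tfrac{i}{\sqrt2}v^{n+1})$ (your $q$, up to constants, and it also records your unitary reduction to $\Sigma(n+1,2,2)$ in (\ref{coorchange})), proves exactly your complex-tangency transport statement (Lemma \ref{technicalphi}), and compares the two cross-sections of $f_{A_n}^{-1}(0)\setminus\{\mathbf{0}\}$ by a Liouville flow plus Gray-type stability (Lemmas \ref{oneparameter} and \ref{graycor}). The genuine differences are in how that last comparison is run. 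The paper flows along the standard radial field $Y=\nabla\rho$ and invokes Geiges' contact-type-hypersurface lemma; you instead use the weighted Euler field and interpolate defining functions inside $V=f_{A_n}^{-1}(0)$. Your choice is the more robust one: since $V$ is only a weighted cone, $df_{A_n}(Y)=\tfrac12\left((n+1)z_0^{n+1}+4z_1z_2\right)=(1-n)z_1z_2$ on $V$, so the radial field is not tangent to $V$ for $n>1$ and the paper's flow argument really requires the intrinsic gradient of $\rho|_V$, whereas your field is manifestly tangent. Moreover, the verification you flag as the crux does go through in a few lines: both $|z|^2|_V$ and $\phi_w$ (best defined by descent, $q^*\phi_w=(|u|^2+|v|^2)^2$, so that it is smooth on $V\setminus\{\mathbf{0}\}$ --- as an ambient function it fails to be smooth along $z_1=0$) have strictly positive derivative along the weighted Euler field, and both are strictly plurisubharmonic on the complex surface $V\setminus\{\mathbf{0}\}$ ($|z|^2$ by restriction, $\phi_w$ because plurisubharmonicity is a local biholomorphic invariant and $q$ is a covering off the origin); hence every convex combination has compact strictly pseudoconvex level sets transverse to the Liouville field, and Gray stability applies. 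Finally, your assertion that $q$ descends to a diffeomorphism $L(n+1,n)\to M$ is correct (it is a bijective immersion between equidimensional manifolds, hence a diffeomorphism), and it buys you something concrete: it makes the paper's extra Moser argument on $\C^2\setminus\{\mathbf{0}\}$ (Lemma \ref{moserlem}, Corollary \ref{atlast}) unnecessary --- a step the paper includes only because its remark claims, overly cautiously, that $\varphi$ fails to be a diffeomorphism onto its image where $uv=0$. The trade-off: the paper's route avoids any pseudoconvexity check on an interpolating family, but pays with the extra Moser step and the delicate tangency of its Liouville field; your route stays entirely inside $V\setminus\{\mathbf{0}\}$ at the cost of that (short) check.
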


Theorems \ref{linksh} and \ref{lenslinkcontacto} allow us to reprove the following result of van Koert and Kwon  \cite{O}.  Since $(L_{A_n}, \xi_0)$ and   $(L(n+1,n), \xi_{std})$  are contactomorphic and $SH_*^{S^1,+}$ is a contact invariant,     $SH_*^{S^1,+}(L(n+1,n),\xi_{std}) =SH_*^{S^1,+}(L_{A_n}, \xi_0)$.  

\begin{thm}[Appendix A \cite{O}]
The positive $S^1$-equivariant symplectic homology of $(L(n+1,n),\xi_{std})$ is
\[
SH^{+,S^1}_*(L(n+1,n), \xi_{std}) = \left\{  \begin{array}{cl}
    \Q^n & *  =1   \\
    \Q^{n+1} & * \geq 3, \mbox{ odd } \\
       0 & * \ \mbox{ else } \\
\end{array} \right.
\]
\end{thm}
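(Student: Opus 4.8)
The plan is to deduce the statement directly from the two principal computations of the paper, Theorem~\ref{linksh} and Theorem~\ref{lenslinkcontacto}, combined with the invariance properties of positive $S^1$-equivariant symplectic homology. The key point is that $SH^{+,S^1}_*$ is, under suitable index hypotheses, an invariant of the contact structure and not merely of a chosen contact form or symplectic filling; hence any contactomorphism between two contact manifolds induces an isomorphism on these groups, and the computation carried out for the link transfers verbatim to the lens space.

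First I would invoke Theorem~\ref{lenslinkcontacto}, which provides an explicit contactomorphism
\[
\Phi \colon (L_{A_n}, \xi_0 = \ker \alpha_0) \xrightarrow{\ \sim\ } (L(n+1,n), \xi_{std} = \ker \lambda_{std}).
\]
Such a map identifies the two contact structures and, up to the relevant conformal factor, their Reeb dynamics. Because $SH^{+,S^1}_*$ depends only on the underlying contact structure in the range where it is well defined as a contact invariant, $\Phi$ yields an isomorphism
\[
SH^{+,S^1}_*(L(n+1,n), \xi_{std}) \cong SH^{+,S^1}_*(L_{A_n}, \xi_0).
\]

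The step requiring care is the justification that $SH^{+,S^1}_*$ is genuinely a contact invariant in this three-dimensional setting. Following \cite[Theorems 1.2 and 1.3]{GuSH} and \cite[Section 4.1.2]{BO}, the operative criterion is that every contractible Reeb orbit $\gamma$ satisfy $\mu_{CZ}(\gamma) \geq 3$. This is exactly what was verified in the proof of Theorem~\ref{linksh}: the first contractible iterate of each simple orbit $\gamma_\pm$ is its $(n+1)$-st, and the index formula~(\ref{CZeq}) forces every such orbit to have Conley--Zehnder index at least $3$. Thus the invariance hypothesis holds on the $L_{A_n}$ side, the displayed isomorphism is legitimate, and the explicit answer may be transported across $\Phi$.

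Finally, substituting the computation of Theorem~\ref{linksh} into the isomorphism yields
\[
SH^{+,S^1}_*(L(n+1,n), \xi_{std}) \cong SH^{+,S^1}_*(L_{A_n}, \xi_0) = \begin{cases} \Q^n & * = 1, \\ \Q^{n+1} & * \geq 3 \text{ odd}, \\ 0 & \text{else}, \end{cases}
\]
which is precisely the claimed formula. The main obstacle is the invariance claim itself: one must ensure that the index bound $\mu_{CZ} \geq 3$ for contractible orbits is a feature of the contact structure $\xi_0$ rather than an accident of the auxiliary form $\alpha_\epsilon$, so that the cited theorems of Gutt and of Bourgeois--Oancea apply and the comparison with $\xi_{std}$ is meaningful. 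Granting this, no additional Floer-theoretic computation is needed; the theorem is a formal consequence of combining the contactomorphism with the known homology of the link.
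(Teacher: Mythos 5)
Your proposal is correct and is essentially identical to the paper's own argument: the paper likewise deduces the lens-space computation by combining the contactomorphism of Theorem~\ref{lenslinkcontacto} with the contact invariance of $SH^{+,S^1}_*$ (justified, as you note, by the bound $\mu_{CZ}(\gamma)\geq 3$ for contractible orbits established via Theorem~\ref{CZcomputation} in the proof of Theorem~\ref{linksh}), and then transports the answer from $(L_{A_n},\xi_0)$. The only difference is cosmetic: the paper presents this transfer in two lines before the theorem statement and cites Kwon--van Koert's original direct computation with the forms $\lambda_{a_1,a_2}$ merely as context, whereas you spell out the invariance hypothesis more carefully --- which is the right point to worry about.
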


Their proof relies on the following nondegenerate contact form on $(L(n+1,n),\xi_{std})$. If $a_1,a_2$ are any rationally independent positive real numbers then
\[ \lambda_{a_1,a_2} = \frac{i}{2} \sum_{ j = 1}^2 a_j(z_j d\overline{z}_j - \overline{z}_j dz_j)\]
 is a nondegenerate contact form for $(L(n+1,n), \xi_{std})$. The simple Reeb orbits on  $L(n+1,n)$ are given by
\begin{align*}
\gamma_1 & = (e^{it/a_1},0) \quad \quad 0 \le t \le \frac{ 2 a_1\pi}{n+1}, \\
\gamma_2 & = (0,e^{it/a_2}) \quad \quad 0 \le t \le \frac{2a_2\pi}{n+1},
\end{align*}
which descend from the simple isolated Reeb orbits on $S^3$.  Again, the $n+1$ different free homotopy classes associated to this lens space are realized by covers of the isolated Reeb orbits $\gamma_i$ for $i=1$ or $2$. The Conley-Zehnder index for $\gamma_1^N$ is 
\begin{equation}\label{CZlens} \mu_{CZ}(\gamma_1^N) = 2\left(\left\lfloor \frac{N}{n+1}\right\rfloor + \left\lfloor \frac{N a_1}{(n+1)a_2}\right\rfloor\right) + 1,
\end{equation}
with a similar formula holding for $\gamma_2^N$.


   \bigskip

\textbf{Outline} The necessary background is given in Section \ref{background}.  The construction of a nondegenerate contact form and the proof of Theorem \ref{CZcomputation} is given in Section \ref{CZcomputationsection}.  The proof of Theorem \ref{lenslinkcontacto} is given in Section \ref{sectionlinklens}.

\section*{Acknowledgements}
Abbrescia, Huq-Kuruvilla, and Sultani thank Dr. Jo Nelson and Robert Castellano for their patient guidance and tutelage for this project. They have expanded our mathematical knowledge and have set us on the right path for further research. We especially thank Dr. Nelson for going above and beyond the call of an REU instructor.   We are very grateful to the referee for their numerous helpful suggestions on improving the exposition of this paper.  

We thank Dr. Robert Lipschitz and the Columbia University math REU program for giving us the opportunity to bring this project to fruition. The REU program was partially funded by NSF Grant DMS-0739392. Since graduating from Columbia, Leonardo Abbrescia is supported in part by a NSF graduate research fellowship.  Jo Nelson is supported by NSF grant DMS-1303903, the Bell Companies Fellowship, the Charles Simonyi Endowment, and the Fund for Mathematics at the Institute for Advanced Study.

\section{Background}\label{background}
In these sections we recall all the necessary symplectic and contact background which is needed to prove Theorems \ref{CZcomputation} and \ref{lenslinkcontacto}.
\subsection{Contact Structures} \hspace{\fill} \\
First we recall some notions from contact geometry.  
\begin{definition}
Let $M$ be a manifold of dimension $2n+1$.  A \textbf{contact structure} is a maximally non-integrable hyperplane field $\xi=\mbox{ker }\alpha \subset TM$.  
\end{definition}

\begin{rem}
The kernel of a 1-form $\alpha$ on $M^{2n+1},$ $\xi=\ker \alpha$, is a contact structure whenever 
\[
\alpha \wedge (d\alpha)^n \neq 0, 
\]
which is equivalent to the condition that $d\alpha$ be nondegenerate on $\xi$.
\end{rem}
Note that the contact structure is unaffected when we multiply the contact form $\alpha$ by any positive or negative function on $M$.  We say that two contact structures $\xi_0=\mbox{ker } \alpha_0$ and $\xi_1=\mbox{ker }\alpha_1$ on a manifold $M$ are \textbf{contactomorphic} whenever there is a diffeomorphism $\psi:M \to M$ such that $\psi$ sends   $\xi_0$ to $\xi_1$:
\[
\psi_*(\xi_0)=\xi_1
\]
If a diffeomorphism $\psi: M\to M$ is in fact a contactomorphism then there exists a non-zero function $g:M \to \R $ such that $\psi^*\alpha_1=g\alpha_0$. Finding an explicit contactomorphism often proves to be a rather difficult and messy task, but an application of Moser's argument yields Gray's stability theorem, which essentially states that there are no non-trivial deformations of contact structures on a fixed closed manifold.  

First we give the statement of Moser's Theorem, which says that one cannot vary a symplectic structure by perturbing it within its cohomology class.  Recall that a \textbf{symplectic structure} on a smooth manifold $W^{2n}$ is a nondegenerate closed 2-form $\omega \in \Omega^2(W)$.

\begin{thm}[Moser's theorem] \cite[Thm 3.17]{MD} \label{moser}
Let $W$ be a closed manifold and suppose that $\oo_t$ is a smooth family of cohomologous symplectic forms on $W$.  Then there is a family of diffeomorphisms $\Psi_t$ of $W$ such that
\[
\Psi_0=\mbox{id},  \ \ \ \psi^*_t\oo_t=\oo_0.
\]
\end{thm}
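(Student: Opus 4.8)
The plan is to run the classical Moser trick: realize the sought family $\Psi_t$ as the flow of a cleverly chosen time-dependent vector field $X_t$, and reduce the whole problem to a pointwise linear equation that nondegeneracy of $\oo_t$ solves automatically.

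First I would exploit the hypothesis that the $\oo_t$ are cohomologous. Since $[\oo_t]$ is independent of $t$, differentiating gives $\left[\frac{d}{dt}\oo_t\right]=0$, so each $\dot\oo_t:=\frac{d}{dt}\oo_t$ is an \emph{exact} $2$-form. The first genuine task is to produce a \emph{smooth family} of primitives, i.e.\ $1$-forms $\sigma_t$ depending smoothly on $t$ with $d\sigma_t=\dot\oo_t$. On the closed manifold $W$ this can be arranged by applying a fixed homotopy operator coming from Hodge theory (the Green's operator for a background Riemannian metric) to $\dot\oo_t$; such an operator is linear and $t$-independent, so it manifestly preserves smooth dependence on the parameter.

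Next I would set up the flow. Seek $X_t$ and define $\Psi_t$ by $\frac{d}{dt}\Psi_t=X_t\circ\Psi_t$ with $\Psi_0=\mathrm{id}$. Because $W$ is closed, every smooth time-dependent vector field is complete, so $\Psi_t$ exists for all $t$ and is a family of diffeomorphisms. Since $\Psi_0^*\oo_0=\oo_0$, the target identity $\Psi_t^*\oo_t=\oo_0$ is equivalent to $\frac{d}{dt}\Psi_t^*\oo_t=0$. Using the formula for the derivative of a time-dependent pullback, Cartan's magic formula, and $d\oo_t=0$, I would compute
\[
\frac{d}{dt}\Psi_t^*\oo_t=\Psi_t^*\left(\mathcal{L}_{X_t}\oo_t+\dot\oo_t\right)=\Psi_t^*\left(d\,\iota_{X_t}\oo_t+d\sigma_t\right)=\Psi_t^*\,d\big(\iota_{X_t}\oo_t+\sigma_t\big).
\]
It therefore suffices to choose $X_t$ so that $\iota_{X_t}\oo_t+\sigma_t=0$. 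This is the crux, and it is where nondegeneracy does all the work: for each fixed $(t,p)$ the map $v\mapsto\iota_v\oo_t$ is a linear isomorphism $T_pW\to T_p^*W$, so there is a unique $X_t$ solving $\iota_{X_t}\oo_t=-\sigma_t$, depending smoothly on $(t,p)$ by Cramer's rule. With this $X_t$ the integrand vanishes identically, $\Psi_t^*\oo_t$ is constant, and its value equals $\oo_0$ at $t=0$, yielding $\Psi_t^*\oo_t=\oo_0$.

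The main obstacle is not the concluding inversion but the preliminary smoothness point: securing the family of primitives $\sigma_t$ so that it varies smoothly in $t$ (and, relatedly, invoking compactness of $W$ to guarantee the flow is complete so the $\Psi_t$ are globally defined diffeomorphisms). Once a smooth family $\sigma_t$ is in hand, everything else is a formal computation driven by the nondegeneracy of $\oo_t$.
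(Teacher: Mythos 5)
Your proof is correct and is essentially the standard Moser-trick argument from the source the paper cites for this statement (\cite{MD}); the paper itself states the theorem without proof, so there is nothing to diverge from. Your two flagged subtleties --- producing the smooth family of primitives $\sigma_t$ (handled validly via a fixed Hodge-theoretic homotopy operator applied to $\dot\oo_t$, whose exactness follows from differentiating the constant class $[\oo_t]$) and completeness of the flow on the closed manifold $W$ --- are exactly the points the textbook proof must address, and you resolve both correctly.
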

 
The aforementioned contact analogue of Moser's theorem is Gray's stability theorem, stated formally below.

\begin{thm}[Gray's stability theorem] \cite[Thm 2.2.2]{G}
Let $\xi_t, \ t \in [0,1]$, be a smooth family of contact structures on a closed manifold $V$.  Then there is an isotopy $(\psi_t)_{t\in [0,1]}$ of $V$ such that 
\[ {\psi_t}_*(\xi_0) = \xi_t \  \mbox{ for each } t \in [0,1]    \]
\end{thm}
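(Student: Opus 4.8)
The plan is to run Moser's trick in the contact setting, producing the isotopy $\psi_t$ as the flow of a time-dependent vector field tangent to the contact distributions. First I would choose a smooth family of defining $1$-forms $\alpha_t$ with $\ker\alpha_t=\xi_t$ (possible since coorientability is locally constant, hence constant, along the smooth family) and look for an isotopy satisfying $\psi_t^*\alpha_t=\lambda_t\,\alpha_0$ for some smooth family of positive functions $\lambda_t$ with $\lambda_0\equiv 1$. Because rescaling a contact form by a nowhere-zero function does not change its kernel, this identity is equivalent to $\ker(\psi_t^*\alpha_t)=\ker\alpha_0$, i.e. $(d\psi_t)^{-1}\xi_t=\xi_0$, which is exactly the desired conclusion ${\psi_t}_*(\xi_0)=\xi_t$.

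Next, writing $\psi_t$ as the flow of a vector field $X_t$ (so $\tfrac{d}{dt}\psi_t=X_t\circ\psi_t$ and $\psi_0=\mathrm{id}$) and seeking $X_t\in\xi_t$, I would differentiate the target identity $\psi_t^*\alpha_t=\lambda_t\alpha_0$ in $t$ and apply Cartan's formula $\mathcal L_{X_t}\alpha_t=d(\iota_{X_t}\alpha_t)+\iota_{X_t}\,d\alpha_t$. Since $\iota_{X_t}\alpha_t=0$ for $X_t\in\ker\alpha_t$, the problem reduces to solving, pointwise on $V$, the ``homological equation''
\[
\dot\alpha_t+\iota_{X_t}\,d\alpha_t=\mu_t\,\alpha_t,
\]
where $\mu_t=\big(\tfrac{d}{dt}\log\lambda_t\big)\circ\psi_t^{-1}$ is an unknown function to be found together with $X_t$.

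The crux is solving this equation. Evaluating it on the Reeb field $R_t$ of $\alpha_t$ (where $\alpha_t(R_t)=1$ and $\iota_{R_t}d\alpha_t=0$) and using that $(\iota_{X_t}d\alpha_t)(R_t)=d\alpha_t(X_t,R_t)=0$ for every $X_t\in\xi_t$ forces $\mu_t=\dot\alpha_t(R_t)$, so $\mu_t$ is determined explicitly. With this choice the $1$-form $\beta_t:=\mu_t\alpha_t-\dot\alpha_t$ also annihilates $R_t$; hence both sides of $\iota_{X_t}d\alpha_t=\beta_t$ lie in the annihilator of $R_t$ and are determined by their restrictions to $\xi_t$. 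Because the contact condition makes $d\alpha_t$ nondegenerate on $\xi_t$, the restricted map $X\mapsto(\iota_X d\alpha_t)|_{\xi_t}$ is an isomorphism $\xi_t\to\xi_t^{*}$, so there is a unique $X_t\in\xi_t$ solving the equation, and it depends smoothly on $t$.

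Finally, since $V$ is closed the time-dependent field $X_t$ integrates to a flow $\psi_t$ defined for all $t\in[0,1]$; setting $\lambda_t=\exp\!\int_0^t(\mu_s\circ\psi_s)\,ds>0$ then recovers $\psi_t^*\alpha_t=\lambda_t\alpha_0$ and hence ${\psi_t}_*(\xi_0)=\xi_t$. I expect the main obstacle to be the algebraic heart of the argument: recognizing that pairing with the Reeb field both pins down the a priori undetermined conformal factor $\mu_t$ and simultaneously yields the compatibility condition $\beta_t(R_t)=0$ that is precisely what allows $d\alpha_t|_{\xi_t}$ to be inverted. Compactness of $V$ is the second essential ingredient, upgrading this pointwise infinitesimal solution to a globally defined isotopy on all of $[0,1]$.
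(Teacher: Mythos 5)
Your argument is correct and is precisely the standard Moser-trick proof of Gray stability: the paper itself gives no proof of this theorem, citing it directly from Geiges \cite[Thm 2.2.2]{G}, and your derivation (reducing to the homological equation $\dot\alpha_t+\iota_{X_t}d\alpha_t=\mu_t\alpha_t$, pinning down $\mu_t=\dot\alpha_t(R_t)$ by evaluating on the Reeb field, and inverting $d\alpha_t|_{\xi_t}$ to find the unique $X_t\in\xi_t$) is exactly the argument in that cited source, including the correct use of closedness of $V$ to integrate $X_t$ to a global isotopy.
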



Next we give the most basic example of a contact structure.

\begin{example}
\em
Consider $\R^{2n+1}$ with coordinates $(x_1, y_1,...,x_n,y_n,z)$ and the 1-form
\[
\alpha=dz+\sum_{j=1} ^n x_jdy_j.
\]
Then $\alpha$ is a contact form for $\R^{2n+1}$.  The contact structure $\xi=\mbox{ker }\alpha$ is called the standard contact structure on $\R^{2n+1}$
\end{example}
As in symplectic geometry, a variant of Darboux's theorem holds.  This states that locally all contact structures are diffeomorphic to the standard contact structure on $\R^{2n+1}$.

A contact form gives rise to a unique Hamiltonian-like vector fields as follows.  

\begin{definition}
 For any contact manifold $(M, \xi=\mbox{ker }\alpha)$  the \textbf{Reeb vector field} $R_\alpha$ is defined to be the unique vector field determined by $\alpha$: 
\[
\iota(R_\alpha)d\alpha=0, \ \ \ \alpha(R_\alpha)=1.   
\]
We define the Reeb flow of $R_\alpha$ by $\varphi_t: M \to M$, $\dot{\varphi_t} = R_\alpha(\varphi_t)$.   
\end{definition}

The first condition says that $R_\alpha$ points along the unique null direction of the form $d\alpha$ and the second condition normalizes $R_\alpha$.   Because
\[
\mathcal{L}_{R_\alpha} \alpha = d  \iota_{R_\alpha}\alpha + \iota_{R_\alpha}  d\alpha
\]
the flow of  $R_\alpha$ preserves the form $\alpha$ and hence the contact structure $\xi$.  Note that if one chooses a different contact form $f \alpha$, the corresponding vector field $R_{f\alpha}$ is very different from $R_\alpha$, and its flow may have quite different properties.  

 A {\bf{Reeb orbit}} $\gamma$ of period $T$ associated to $R_\alpha$ is defined to be a path $\gamma: \R/T\Z \to M$ given by an integral curve of $R_\alpha$. That is,
\[ 
\frac{d\gamma}{dt} = R_\alpha \circ \gamma(t), \quad \gamma(0) = \gamma(T).
\]  
Two Reeb orbits
\[
\gamma_1, \ \gamma_0 : \R/T\Z \to M
\]
are considered equivalent if they differ by reparametrization, i.e. precomposition with a translation of $\R/T\Z.$  

The $N$-fold cover $\gamma^N$ is defined to be the composition of $\gamma_\pm$ with $\R/NT\Z \to \R/T\Z$.  A 
\textbf{simple Reeb orbit} is one such that $\gamma: \R/T\Z \to M$ is injective.  

\begin{rem}
Since Reeb vector fields are autonomous, the terminology ``simple Reeb orbit $\gamma$" refers to the entire equivalence class of orbits, and likewise for its iterates.
\end{rem}

A Reeb orbit $\gamma$ is said to be {\bf{nondegenerate}} whenever the linearized return map 
\[ d(\varphi_T)_{\gamma(0)}: \xi_{\gamma(0)} \to \xi_{\gamma(T) = \gamma(0)}\] has no eigenvalue equal to 1. A {\bf{nondegenerate contact form}} is one whose Reeb orbits are all nondegenerate and hence isolated. Note that since the Reeb flow preserves the contact structure, the linearized return map is symplectic.

 Next we briefly review the canonical contact form on $S^3$ and its Reeb dynamics. 
\begin{example}[Canonical Reeb dynamics on the 3-sphere]
\label{3-sphere}
{  If we define the following function $f\colon \R^4 \to \R$ 
\[
f(x_1, y_1, x_2, y_2)= x_1^2+y_1^2+x_2^2+ y_2^2,
\]  
then $S^3=f^{-1}(1)$. Recall that the canonical contact form on $S^3 \subset \R^4$ is given to be
\begin{equation}
\label{ls}
\lambda_0 := - \frac{1}{2} df \circ J = \left(  x_1  dy_1 - y_1 dx_1 +  x_2  dy_2 -  y_2  dx_2 \right)\arrowvert_{S^3}.
\end{equation}
The Reeb vector field is given by
\begin{equation}\label{reebreal}
\begin{array}{lcl}
R_{\lambda_0}&=&\left(x_1 \dfrac{\partial}{\partial y_1} - y_1 \dfrac{\partial}{\partial x_1} + x_2 \dfrac{\partial}{\partial y_2} - y_2 \dfrac{\partial}{\partial x_2}\right) \\
&=& (-y_1,x_1,-y_2,x_2). \\
\end{array}
\end{equation}
Equivalently we may reformulate these using complex coordinates by identifying $\R^4$ with $\C^2$ via
\[
u = x_1+iy_1, \ \ \ v = x_2+iy_2.
\]
We obtain
\[
\lambda_0=\frac{i}{2}\left(u d\bar{u} - \bar{u} du + v d\bar{v} - \bar{v} dv\right)\big |_{S^3},
\]
and
\begin{equation}
\label{reeb3sphere2}
\begin{array}{ccl}
R_{\lambda_0} & =& i \left( u \dfrac{\partial}{\partial u} -  \bar{u} \dfrac{\partial}{\partial \bar{u}} +  v \dfrac{\partial}{\partial v} -  \bar{v} \dfrac{\partial}{\partial \bar{v}}\right) \\
&=& (iu, iv) \\
\end{array}
\end{equation}
The second expression for $R_{\lambda_0}$ follows from (\ref{reebreal}) since $iu=(-y_1,x_1)$ and $iv=(-y_2,x_2)$.

To see that the orbits of $R_{\lambda_0}$ define the fibers of the Hopf fibration recall that a fiber through a point
\[
(u,v)=(x_1+iy_1, x_2+ iy_2) \in S^3 \subset \C^2,
\]
can be parameterized as 
\begin{equation}
\label{reebflow}
\varphi(t)=(e^{it}u, e^{it}v), \ t\in \R.
\end{equation}
We compute the time derivative of the fiber
\[
 \dot{\varphi}(0)=(iu,iv)=(i x_1 - y_1, i x_2 - y_2).
\]
Expressed as a real vector field on $\R^4$, which is tangent to $S^3$, this is the Reeb vector field $R_{\lambda_0}$ as it appears in (\ref{reeb3sphere2}), so the Reeb flow does indeed define the Hopf fibration.
}
\end{example}

\subsection{Hypersurfaces of contact type}\hspace{\fill}\\
Another notion that we need from symplectic and contact geometry is that of a hypersurface of contact type in a symplectic manifold.   The following notion of a Liouville vector field allows us to define hypersurfaces of contact type. Liouville vector fields will be used to understand the Reeb dynamics of the nondegenerate contact form $\alpha_1$ as well as to construct the contactomorphism between $(L_{A_n},\xi_0)$ and $(L(n+1,n),\xi_{std})$.

 \begin{definition}
\label{lioudef}
A \textbf{Liouville vector field} $Y$ on a symplectic manifold $(W, \omega)$ is a vector field satisfying
\[ \mathcal{L}_Y \omega = \omega \]
The flow $\psi_t$ of such a vector field is conformal symplectic, i.e. $\psi^*_t(\omega)=e^t \omega$.  The flow of these fields are volume expanding, so such fields may only exist locally on compact manifolds. 
\end{definition}

Whenever there exists a Liouville vector field $Y$ defined in a neighborhood of a compact hypersurface $Q$ of $(W, \omega)$, which is transverse to $Q$, we can define a contact 1-form on $Q$ by 
\[
\alpha : = \iota_Y\omega.
\]

\begin{prop}[{\cite[Prop 3.58]{MD}}]\label{contacttype}
Let $(W, \omega)$ be a symplectic manifold and $Q \subset W$ a compact hypersurface.  Then the following are equivalent: 
\begin{itemize}
\item[{(i)}] There exists a contact form $\alpha$ on $Q$ such that $d \alpha = \omega|_Q$.
\item[{(ii)}]  There exists a Liouville vector field $Y:U \to TW$ defined in a neighborhood $U$ of $Q$, which is transverse to $Q$. 
\end{itemize}
If these conditions are satisfied then $Q$ is said to be of \textbf{contact type.}
\end{prop}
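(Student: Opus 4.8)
The plan is to prove the two implications separately, writing $\dim W = 2m$ so that $\dim Q = 2m-1$ and the contact condition on $Q$ reads $\alpha \wedge (d\alpha)^{m-1} \neq 0$. The direction (ii) $\Rightarrow$ (i) is the quicker one and I would dispatch it first; the direction (i) $\Rightarrow$ (ii) requires manufacturing a Liouville field out of the abstract data of $\alpha$, and the transversality of that field is the step where the contact hypothesis enters essentially.

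For (ii) $\Rightarrow$ (i), suppose $Y$ is a Liouville field near $Q$ transverse to $Q$, and set $\alpha := (\iota_Y \omega)|_Q$. Cartan's formula together with $d\omega = 0$ gives $\mathcal{L}_Y \omega = d(\iota_Y \omega)$, so the Liouville condition $\mathcal{L}_Y\omega = \omega$ yields $d(\iota_Y\omega) = \omega$ on the neighborhood; restricting to $Q$ gives $d\alpha = \omega|_Q$, which is half of (i). For the contact condition I would use the derivation identity $\iota_Y(\omega^m) = m\,(\iota_Y\omega)\wedge \omega^{m-1}$ and restrict to $Q$, obtaining $\iota_Y(\omega^m)|_Q = m\,\alpha \wedge (d\alpha)^{m-1}$. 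Since $\omega^m$ is a volume form on $W$ and $Y$ is transverse to $Q$, the left-hand side is a nowhere-vanishing top form on $Q$: evaluating on a basis $e_1,\dots,e_{2m-1}$ of $T_pQ$ gives $\iota_Y(\omega^m)(e_1,\dots,e_{2m-1}) = \omega^m(Y,e_1,\dots,e_{2m-1}) \neq 0$, because $Y,e_1,\dots,e_{2m-1}$ is then a basis of $T_pW$. Hence $\alpha\wedge(d\alpha)^{m-1}\neq 0$ and $\alpha$ is a contact form.

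For (i) $\Rightarrow$ (ii), the key observation is that it suffices to produce, on some neighborhood $U$ of $Q$, a primitive $\beta$ of $\omega$ with $\iota^*\beta = \alpha$ (where $\iota\colon Q \hookrightarrow W$): defining $Y$ by $\iota_Y \omega = \beta$ (possible and unique since $\omega$ is nondegenerate) then gives $\mathcal{L}_Y\omega = d\iota_Y\omega = d\beta = \omega$, so $Y$ is automatically Liouville. To build $\beta$ I would pass to a tubular neighborhood $U \cong Q \times (-\varepsilon,\varepsilon)$ with projection $\pi\colon U \to Q$. Since $\iota$ is a homotopy equivalence, $\iota^*\colon H^2_{dR}(U)\to H^2_{dR}(Q)$ is an isomorphism, and $\iota^*[\omega] = [d\alpha] = 0$ forces $[\omega] = 0$ in $H^2_{dR}(U)$; thus $\omega = d\beta_0$ on $U$ for some $\beta_0$. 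Then $\iota^*\beta_0$ and $\alpha$ share the differential $\omega|_Q$, so $\theta := \iota^*\beta_0 - \alpha$ is closed, and $\beta := \beta_0 - \pi^*\theta$ satisfies $d\beta = \omega$ and $\iota^*\beta = \iota^*\beta_0 - \theta = \alpha$, as desired.

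The main obstacle is the transversality of the resulting $Y$, which is exactly where the contact (rather than merely symplectic) nature of $\alpha$ is used. I would argue pointwise at $p \in Q$ using the Reeb field $R_\alpha$: from $d\alpha = \omega|_Q$, the relation $\iota_{R_\alpha}d\alpha = 0$ says $\omega(R_\alpha,\cdot)$ vanishes on $T_pQ$, so $R_\alpha$ spans the characteristic line $T_pQ^{\omega}$ (which, being the symplectic orthogonal of a hyperplane, lies inside $T_pQ$), while $\alpha(R_\alpha) = 1$. Suppose toward a contradiction that $Y_p \in T_pQ$. Since $R_\alpha \in T_pQ$ and $\iota^*\beta = \alpha$, we get $\omega(Y_p, R_\alpha) = \beta(R_\alpha) = \alpha(R_\alpha) = 1$; but if $Y_p \in T_pQ$ then $\omega(Y_p, R_\alpha) = d\alpha(Y_p,R_\alpha) = -(\iota_{R_\alpha}d\alpha)(Y_p) = 0$. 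This contradiction shows $Y_p \notin T_pQ$, so $Y$ is transverse to $Q$, completing (i) $\Rightarrow$ (ii). The only bookkeeping subtlety is that the equation $\iota_v\omega|_{T_pQ}$ only determines $v$ modulo the characteristic line, so transversality could a priori depend on that ambiguity; but the primitive construction fixes $Y$ uniquely via nondegeneracy of $\omega$ on all of $T_pW$, so no ambiguity survives.
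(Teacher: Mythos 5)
Your proposal is correct and complete; note that the paper itself offers no proof of this proposition, quoting it directly from McDuff--Salamon \cite{MD}, so the comparison is against that cited source, and your argument is essentially the standard one found there: for (ii)$\Rightarrow$(i) the antiderivation identity $\iota_Y(\omega^m)=m\,(\iota_Y\omega)\wedge\omega^{m-1}$ together with transversality of $Y$, and for (i)$\Rightarrow$(ii) the cohomological construction on a tubular neighborhood of a primitive $\beta$ of $\omega$ with $\iota^*\beta=\alpha$, followed by the Reeb-field pairing $\omega(Y_p,R_\alpha)=\alpha(R_\alpha)=1$ to rule out $Y_p\in T_pQ$. Your closing remark correctly disposes of the one genuine subtlety (that $\iota_v\omega|_{T_pQ}$ determines $v$ only modulo the characteristic line, whereas the primitive fixes $Y$ on all of $T_pW$), and the implicit use of a product neighborhood $U\cong Q\times(-\varepsilon,\varepsilon)$ is harmless since $Q$, carrying a contact form inside an orientable symplectic $W$, is co-orientable.
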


We will need the following application of Gray's stability theorem to hypersurfaces of contact type to prove Theorem \ref{lenslinkcontacto} in Section \ref{sectionlinklens}.

\begin{lem}\cite[Lemma 2.1.5]{G}\label{graycor}
Let $Y$ be a Liouville vector field on a symplectic manifold $(W,\omega)$.  Suppose that $M_1$ and $M_2$ are hypersurfaces of contact type in $W$.   Assume that there is a smooth function 
\begin{equation}\label{heq}
h:W \to \R
\end{equation}
such that the time-1 map of the flow of $hY$ is a diffeomorphism from $M_1$ to $M_2$.  Then this diffeomorphism is in fact a contactomorphism from $(M_1, \ker \iota_Y \omega|_{TM_1})$ to $(M_2, \ker \iota_Y \omega|_{TM_2})$.
\end{lem}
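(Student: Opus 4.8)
The plan is to show that the time-$1$ map of the flow of $hY$ pulls $\alpha_2$ back to a strictly positive multiple of $\alpha_1$; the underlying mechanism is that the flow of \emph{any} pointwise rescaling of a Liouville field acts conformally on the associated Liouville form. Write $\alpha := \iota_Y\omega$ for the global $1$-form on $W$, so that $\alpha_i = \alpha|_{TM_i}$ for $i = 1,2$. I would first record two elementary facts. Since $\omega$ is closed, Cartan's formula gives $d\alpha = d\iota_Y\omega = \mathcal{L}_Y\omega - \iota_Y d\omega = \omega$. Moreover $\iota_Y\alpha = \iota_Y\iota_Y\omega = \omega(Y,Y) = 0$, and this tautological vanishing is the key that drives the argument.

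Next I would compute the Lie derivative of $\alpha$ along $X := hY$. By Cartan's formula,
\[
\mathcal{L}_{hY}\alpha = \iota_{hY}\,d\alpha + d\,\iota_{hY}\alpha = h\,\iota_Y\omega + d\big(h\,\iota_Y\alpha\big) = h\alpha,
\]
where the second term vanishes because $\iota_Y\alpha = 0$. Let $\phi_t$ denote the flow of $hY$, defined along the orbits through $M_1$ at least up to time $1$ by hypothesis. Then the identity $\tfrac{d}{dt}\phi_t^*\alpha = \phi_t^*\mathcal{L}_{hY}\alpha = (h\circ\phi_t)\,\phi_t^*\alpha$ exhibits $t \mapsto \phi_t^*\alpha$ as the solution of a pointwise linear scalar ODE, which integrates to
\[
\phi_t^*\alpha = g_t\,\alpha, \qquad g_t := \exp\!\left(\int_0^t (h\circ\phi_s)\,ds\right) > 0 ;
\]
in particular $\phi_1^*\alpha = g_1\,\alpha$ with $g_1 > 0$.

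Finally I would restrict to $M_1$. Writing $\psi := \phi_1|_{M_1}\colon M_1 \to M_2$, for $p \in M_1$ and $v \in T_pM_1$ we have $d\psi(v) \in T_{\psi(p)}M_2$, so $(\psi^*\alpha_2)(v) = \alpha\big(d\phi_1(v)\big) = (\phi_1^*\alpha)(v) = g_1(p)\,\alpha_1(v)$; that is, $\psi^*\alpha_2 = (g_1|_{M_1})\,\alpha_1$ with $g_1|_{M_1} > 0$. Hence $\psi$ carries $\ker\alpha_1$ onto $\ker\alpha_2$ and is a contactomorphism, as claimed. I expect no serious obstacle: the only point requiring care is that the flow of $hY$, and hence $g_1$, is well defined and positive along the relevant orbits, which follows from the hypothesis that the time-$1$ map is a diffeomorphism $M_1 \to M_2$ together with $g_t$ being a positive exponential. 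The genuine content of the lemma is the identity $\mathcal{L}_{hY}\alpha = h\alpha$, resting on $\iota_Y\alpha = 0$; this is precisely the Liouville analogue of the conformal scaling underlying Moser's and Gray's stability arguments.
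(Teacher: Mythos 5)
Your proof is correct. The paper itself gives no proof of this lemma, deferring to the cited reference \cite[Lemma 2.1.5]{G}, and your argument --- computing $\mathcal{L}_{hY}(\iota_Y\omega) = h\,\iota_Y\omega$ via Cartan's formula together with $d(\iota_Y\omega)=\omega$ and the tautological vanishing $\iota_Y\iota_Y\omega = 0$, then integrating the resulting scalar ODE to obtain $\phi_1^*(\iota_Y\omega) = g_1\,\iota_Y\omega$ with $g_1>0$ and restricting to $M_1$ --- is essentially the standard proof found in that reference.
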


\subsection{Symplectization} \hfill \\
The symplectization of a contact manifold is an important notion in defining Floer theoretic theories like symplectic and contact homology.  It will also used in our calculation of the Conley-Zehnder index.  Let $(M, \xi = \ker \alpha)$ to be a contact manifold.  The \textbf{symplectization} of $(M,\xi = \ker \alpha)$ is given by the manifold  $\R \times M$ and symplectic form
\[
\omega = e^t(d\alpha - \alpha \wedge dt) = d (e^t\alpha).
\]
Here $t$ is the coordinate on $\R$, and it should be noted that $\alpha$ is interpreted as a 1-form on $\R \times M$, as we identify $\alpha$ with its pullback under the projection $\R \times M \to M$.  

Any contact structure $\xi$ may be equipped with a complex structure ${J}$ such that $(\xi, {J})$ is a complex vector bundle. 
 This set is nonempty and contractible.    There is a unique canonical extension of the almost complex structure ${J}$ on $\xi$ to an $\R$-invariant almost complex structure $\tilde{J}$ on $T(\R \times M)$, whose existence is due to the splitting,
\begin{equation}
\label{decomp}
T(\R \times M) = \R \frac{\partial}{\partial t} \oplus \R R_{\alpha} \oplus \xi.
\end{equation}

\begin{definition}[Canonical extension of ${J}$ to $\tilde{J}$ on $T(\R \times M)$]\label{complexstruc}
Let $[a,b;v]$ be a tangent vector where $a, \ b \in \R$ and $v \in \xi$.  We can extend ${J}: \xi \to \xi$ to $\tilde{J}: T(\R \times M) \to T(\R \times M)$ by
\[
\tilde{J}[a,b;v] = [-b,a,{J}v].
\]
Thus $\tilde{J}|_\xi = {J}$ and $\tilde{J}$ acts on $\R  \frac{\partial}{\partial t} \oplus \R R_{\alpha}$ in the same manner as multiplication by $i$ acts on $\C$, namely ${J}  \frac{\partial}{\partial t} = R_{\alpha}$. 
\end{definition}


\subsection{The Conley-Zehnder index}\label{CZsection}\hfill \\
The Conley-Zehnder index $\mu_{CZ}$, is a Maslov index for arcs of symplectic matrices which  assigns an integer $\mu_{CZ}(\Phi)$ to every path of symplectic matrices $\Phi : [0,T] \to \mbox{Sp}(n)$, with $\Phi(0) = \mathds{1} $.   In order to ensure that the Conley-Zehnder index assigns the same integer to homotopic arcs, one must also stipulate that 1 is not an eigenvalue of the endpoint of this path of matrices, i.e. $\det(\mathds{1} - \Phi(T))\neq 0$.  We define the following set of continuous paths of symplectic matrices that start at the identity and end on a symplectic matrix that does not have 1 as an eigenvalue. 
\[
\Sigma^*(n) = \{ \Phi :[0,T] \to \mbox{Sp}(2n)  \ | \ \Phi \mbox{ is continuous},  \ \Phi(0)=\mathds{1}, \mbox{ and }  \mbox{det}(\mathds{1} - \Phi(T)) \neq 0  \}.
\]

The Conley-Zehnder index is a functor satisfying the following properties, and is uniquely determined by the homotopy, loop, and signature properties.

\begin{thm}\label{CZpropthm}{\cite[Theorem 2.3, Remark 5.4]{RS}}, {\cite[Theorem 2, Proposition 8 \& 9]{GuCZ}}\label{CZprop} \\
There exists a unique functor $\mu_{CZ}$ called the {\bf{Conley-Zehnder index}} that assigns the same integer to all homotopic paths $\Psi$ in $\Sigma^*(n)$,
\[
\mu_{CZ}: \Sigma^*(n) \to \Z.
\]
 such that the following hold.
\begin{enumerate}[\em (1)]
\item {\bf{Homotopy}}: The Conley-Zehnder index is constant on the connected components of $\Sigma^*(n)$.
\item {\bf{Naturalization}}: For any paths $\Phi, \Psi: [0,1] \to Sp(2n)$, $\mu_{CZ}(\Phi\Psi\Phi^{-1}) = \mu_{CZ}(\Psi)$.
\item {\bf{Zero}}: If $\Psi(t) \in \Sigma^*(n)$ has no eigenvalues on the unit circle for $t >0$, then $\mu_{CZ}(\Psi) = 0$.
\item {\bf{Product}}: If $n = n' + n''$, identify $Sp(2n') \oplus Sp(2n'')$ with a subgroup of $Sp(2n)$ in the obvious way. For $\Psi' \in \Sigma^*(n')$, $\Psi'' \in \Sigma^*(n'')$, then $\mu_{CZ}(\Psi' \oplus \Psi'') = \mu_{CZ}(\Psi') + \mu_{CZ}(\Psi'')$.
\item {\bf{Loop}}: If $\Phi$ is a loop at $\mathds{1}$, then $\mu_{CZ}(\Phi\Psi) = \mu_{CZ}(\Psi) + 2\mu(\Phi)$ where $\mu$ is the Maslov Index.
\item {\bf{Signature}}: If $S \in M(2n)$ is a symmetric matrix with $||S|| < 2\pi$ and $\Psi(t) = \exp(J_0St)$, then $\mu_{CZ}(\Psi) = \frac{1}{2}\sgn(S)$.
\end{enumerate}
\end{thm}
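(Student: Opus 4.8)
The plan is to separate the statement into two independent assertions: the \textbf{existence} of a functor $\mu_{CZ}$ on $\Sigma^*(n)$ satisfying all of (1)--(6), and the \textbf{uniqueness} of any functor satisfying merely the Homotopy, Loop, and Signature properties. The uniqueness half is the conceptual core, so I would dispatch it first, and then exhibit an explicit construction realizing all six axioms. Throughout I would follow, and cite for the detailed verifications, the treatments of Salamon--Zehnder, Robbin--Salamon \cite{RS}, and Gutt \cite{GuCZ}.

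For \textbf{uniqueness}, suppose $\mu_{CZ}$ and $\mu_{CZ}'$ both satisfy Homotopy, Loop, and Signature. By the Homotopy property each descends to a function on $\pi_0(\Sigma^*(n))$, so it suffices to produce in each path component a normal-form representative whose index is pinned down by Loop and Signature. I would first record the topology of the endpoint locus $\mathrm{Sp}^*(2n)=\{A\in \mathrm{Sp}(2n):\det(\mathds{1}-A)\neq 0\}$: it has exactly two connected components, distinguished by the sign of $\det(\mathds{1}-A)$, each connected and each containing a convenient model matrix (for instance $-\mathds{1}$ in one component and a hyperbolic/reflection model in the other). Combined with $\pi_1(\mathrm{Sp}(2n))\cong\Z$, generated by a standard loop $L$ at $\mathds{1}$ of known Maslov index $\mu(L)$, this lets me show that every $\Psi\in\Sigma^*(n)$ is homotopic within $\Sigma^*(n)$ to a product $L^{k}\,\sigma_S$, where $\sigma_S(t)=\exp(J_0 S t)$ is a short signature path with $\|S\|<2\pi$ landing in the correct component of $\mathrm{Sp}^*(2n)$ (the sign of $\det(\mathds{1}-\sigma_S(1))$ also fixes the parity of the resulting integer). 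Applying the Loop property $k$ times and then Signature to $\sigma_S$ gives $\mu_{CZ}(\Psi)=2k\,\mu(L)+\tfrac12\sgn(S)$, the same expression for $\mu_{CZ}'$; hence they coincide.

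For \textbf{existence}, I would construct $\mu_{CZ}$ via Robbin--Salamon crossing forms. For a generic smooth path I set $\mu_{CZ}(\Psi)=\tfrac12\sgn\Gamma(\Psi,0)+\sum_{0<t<T}\sgn\Gamma(\Psi,t)$, where $\Gamma(\Psi,t)$ is the crossing form on $\ker(\Psi(t)-\mathds{1})$ at each crossing, with half weight at the always-present crossing $t=0$ and no contribution at the nondegenerate endpoint (this is the Maslov index of \cite[Thm 2.3, Rem 5.4]{RS}). I would then verify the axioms: Homotopy is the homotopy invariance of this signature sum under homotopies keeping the endpoint nondegenerate; Naturalization and Product follow from conjugation-covariance and block-additivity of crossing forms, since signatures are preserved under symplectic conjugation and are additive under direct sums; Zero holds because its hypothesis forces $\Psi$ to be hyperbolic for small $t>0$, so $\dot\Psi(0)=J_0 S_0$ with $J_0 S_0$ infinitesimally hyperbolic, whence $\sgn(S_0)=0$ and the sole crossing at $t=0$ contributes $\tfrac12\sgn(S_0)=0$; and Loop and Signature are checked by direct computation on the generating loop $L$ and on the models $\exp(J_0 S t)$.

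The \textbf{main obstacle} is the topological input underlying uniqueness: proving that $\mathrm{Sp}^*(2n)$ has precisely two connected components and, more delicately, constructing the explicit homotopy that deforms an arbitrary $\Psi$ into the normal form $L^{k}\,\sigma_S$ while keeping its endpoint in $\mathrm{Sp}^*(2n)$ at all times. Equally delicate is the bookkeeping of Maslov indices and signatures across the Loop and Signature axioms, so that both the parity (even endpoint-component versus odd) and the magnitude of the index are forced consistently. These verifications are classical, and I would invoke \cite{RS} and \cite{GuCZ} for the technical lemmas rather than reproduce them.
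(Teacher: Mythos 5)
The paper does not prove this theorem at all: it is quoted as a known result with citations to \cite{RS} and \cite{GuCZ}, and your outline reproduces essentially the standard argument from exactly those sources --- existence via the Robbin--Salamon crossing-form index with half-weight at $t=0$ and no endpoint contribution, and uniqueness from the Homotopy, Loop, and Signature axioms using the two connected components of $\mathrm{Sp}^*(2n)=\{A\in\mathrm{Sp}(2n):\det(\mathds{1}-A)\neq 0\}$ and the normal form $L^k\sigma_S$, which is Gutt's proof in \cite{GuCZ}. Your sketch is sound (including the Zero-axiom step: for nondegenerate $S$ with $\exp(J_0St)$ hyperbolic, the stable and unstable subspaces are Lagrangian and the Hamiltonian quadratic form vanishes on them, forcing $\sgn(S)=0$), so there is no discrepancy with the paper, which simply defers these verifications to the literature as you also propose to do.
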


The linearized Reeb flow of $\gamma$ yields a path of symplectic matrices
 \[
  d(\varphi_t)_{\gamma(0)}: \xi_{\gamma(0)} \to \xi_{\gamma(t) = \gamma(0)}
  \]   
for $t\in[0,T],$ where $T$ is the period of $\gamma$.   

Thus we can compute the Conley-Zehnder index of $d\varphi_t, \ t\in[0,T].$ This index is typically dependent on the choice of trivialization $\tau$ of $\xi$ along $\gamma$ which was used in linearizing the Reeb flow. However, if $c_1(\xi;\Z)=0$ we can use the existence of an (almost) complex volume form on the symplectization to obtain a global means of linearizing the flow of the Reeb vector field. The choice of a complex volume form is parametrized by $H^1(\R \times M;\Z)$, so an absolute integral grading is only determined up to the choice of volume form.  See also \cite[\S 1.1.1]{jocompute}.

We define
\[
\mu_{CZ}^\tau(\gamma):=\mu_{CZ}\left( \left\{ d\varphi_t \right\}\arrowvert_{t\in[0,T]}\right)
\]
In the case at hand we will be able to work in the ambient space of $(\C^3, J_0)$, and use a canonical trivialization of $\C^3$.

\subsection{The canonical contact structure on Brieskorn manifolds}\hspace{\fill} \\
The $A_n$ link is an example of a Brieskorn manifold, which are defined generally by 
\[ 
\Sigma(\mathbf{a})= \left\{ (z_0,\dots,z_m) \in \C^{m+1} \ \bigg| \ f:= \sum_{j = 0}^m z_j^{a_j} = 0, \  a_j \in \Z_{>0} \text{ and } \sum_{j = 0}^m |z_j|^2 = 1 \right\}.
\]
The link of the $A_n$ singularity after a linear change of variables is $ \Sigma(n+1,2,2)$ for $n >3$; see (\ref{coorchange}).
Brieskorn gave a necessary and sufficient condition on $\mathbf{a}$ for $\Sigma(\mathbf{a})$ to be a topological sphere, and means to show when these yield exotic differentiable structures on the topological $(2n-1)$-sphere in \cite{Br}.   A standard calculus argument \cite[Lemma 7.1.1]{G} shows that $\Sigma(\mathbf{a})$ is always a smooth manifold.  

In the mid 1970's, Brieskorn manifolds were found to admit a canonical contact structure, given by their set of complex tangencies,
\[ 
\xi_0=T\Sigma \cap J_0 (T\Sigma), 
\]
where $J_0$ is the standard complex structure on $\C^{m+1}$. The contact structure $\xi_0$ can be expressed as $\xi_0 = \ker \alpha_0$ for the canonical 1-form   
\[
\alpha_0:= (- d\rho \circ J_0)|_\Sigma = \frac{i}{4} \left( \sum_{j=0}^m ( z_j d\bar{z}_j -\bar{z}_jdz_j ) \right)\bigg \vert_\Sigma,
\]
 where $\rho=(||z||^2-1)/4$.  A proof of this fact may be found in {\cite[Thm 7.1.2]{G}}. The Reeb dynamics associated to $\alpha_0$ are difficult to understand.  There is a more convenient contact form $\alpha_1$ constructed by Ustilovsky \cite[Lemma 4.1.2]{U} via the following family.  
 
\begin{prop}[{\cite[Proposition 7.1.4]{G}}] The 1-form 
\[
\alpha_t = \frac{i}{4}\sum_{j = 0}^m \frac{1}{1 - t + \frac{t}{a_j}} (z_j d\bar{z}_j - \bar{z}_jdz_j)
\]
 is a contact form on $\Sigma(\mathbf{a})$ for each $t\in [0,1]$.
\end{prop}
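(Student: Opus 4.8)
The plan is to realize $\Sigma(\mathbf a)$ as a hypersurface of contact type inside the complex hypersurface $V=f^{-1}(0)$ and to invoke Proposition \ref{contacttype}. Regard the $1$-form in the statement as a form $\lambda_t$ on $\C^{m+1}$, so that $\alpha_t=\lambda_t|_{\Sigma(\mathbf a)}$, and write $c_j=c_j(t)=\big(1-t+\tfrac{t}{a_j}\big)^{-1}$; for $t\in[0,1]$ and $a_j\ge 1$ one has $c_j>0$. A direct computation gives
\[
d\lambda_t=\omega_t:=\sum_{j=0}^m c_j\,dx_j\wedge dy_j,
\]
a constant-coefficient symplectic form on $\C^{m+1}$ taming $J_0$ (since $c_j>0$); hence $V\setminus\{0\}$ is a symplectic submanifold of $(\C^{m+1},\omega_t)$ and $\Sigma(\mathbf a)=V\cap S^{2m+1}$ is a compact hypersurface in it. The radial field $Y=\tfrac12\sum_j(x_j\partial_{x_j}+y_j\partial_{y_j})$ obeys $\iota_Y\omega_t=\lambda_t$ and $\mathcal L_Y\omega_t=\omega_t$, so it is the Liouville field of $\omega_t$; but $Y$ is \emph{not} tangent to $V$, since $Y(f)=\tfrac12\sum_j a_jz_j^{a_j}$ need not vanish on $V$. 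I would therefore replace $Y$ by its $g_t$-orthogonal projection $Y^{\top}$ onto $TV$, where $g_t=\omega_t(\cdot,J_0\cdot)$. Because $TV$ is $J_0$-invariant, the normal part $Y^{\perp}$ is $\omega_t$-orthogonal to $TV$, whence $\iota_{Y^{\top}}(\omega_t|_V)=\lambda_t|_V$; thus $Y^{\top}$ is a Liouville field for $\big(V\setminus\{0\},\,\omega_t|_V\big)$ with primitive $\lambda_t|_V$. By Proposition \ref{contacttype}, $\alpha_t$ is contact exactly when $Y^{\top}$ is transverse to $\Sigma(\mathbf a)$ in $V$, i.e. when $dr^2(Y^{\top})\neq 0$ along $\Sigma(\mathbf a)$, where $r^2=\sum_j|z_j|^2$.

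Next I would compute $dr^2(Y^{\top})$. Set $f_j=\partial f/\partial z_j=a_jz_j^{a_j-1}$, so $z_jf_j=a_jz_j^{a_j}$. The $g_t$-normal line to $V$ is the complex line spanned by $\eta=(\bar f_j/c_j)_j$; projecting $Y$ (as the complex vector $\tfrac12 z$) onto $\C\eta$ with respect to the Hermitian form $\langle a,b\rangle_t=\sum_j c_j a_j\bar b_j$, and using $\nabla_{g_t}r^2=(2z_j/c_j)_j$, yields
\[
dr^2(Y^{\top})=dr^2(Y)-dr^2(Y^{\perp})=1-\frac{\mathrm{Re}(B\overline A)}{D},\qquad A=\sum_j z_jf_j,\quad B=\sum_j\frac{z_jf_j}{c_j},\quad D=\sum_j\frac{|f_j|^2}{c_j},
\]
where $dr^2(Y)=r^2=1$ on $\Sigma(\mathbf a)$ and $D>0$ because the $f_j$ do not all vanish on $V\setminus\{0\}$. (The very same quantity appears if one verifies the contact condition through the ambient top form $\lambda_t\wedge(d\lambda_t)^{m-1}\wedge d\,\mathrm{Re}f\wedge d\,\mathrm{Im}f\wedge dr^2$; both routes reduce to showing $\mathrm{Re}(B\overline A)\neq D$.)

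The decisive observation — the step that makes the entire family work — is that the weights form a convex combination, $c_j^{-1}=(1-t)\cdot 1+t\cdot a_j^{-1}$. Since $z_jf_j=a_jz_j^{a_j}$ and $\sum_j z_j^{a_j}=0$ on $V$,
\[
B=\sum_j\Big((1-t)+\tfrac{t}{a_j}\Big)z_jf_j=(1-t)\sum_j z_jf_j+t\sum_j z_j^{a_j}=(1-t)A,
\]
so $\mathrm{Re}(B\overline A)=(1-t)|A|^2$ and $dr^2(Y^{\top})=1-(1-t)|A|^2/D$. It remains to prove $(1-t)|A|^2<D$ on $\Sigma(\mathbf a)$. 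By Cauchy–Schwarz and $\sum_j|z_j|^2=1$ one has $|A|^2=\big|\sum_j z_jf_j\big|^2\le\sum_j|f_j|^2$, while $D=\sum_j|f_j|^2/c_j\ge(1-t)\sum_j|f_j|^2$ because $c_j^{-1}\ge 1-t$; hence $(1-t)|A|^2\le D$. Equality would force $t\sum_j a_j^{-1}|f_j|^2=0$, hence $t=0$ (the $f_j$ do not all vanish), together with Cauchy–Schwarz equality $z=\mu\bar f$; but $z_j=\mu a_j\bar z_j^{\,a_j-1}$ makes every nonvanishing $z_j^{a_j}$ share the argument of $\mu$, contradicting $\sum_j z_j^{a_j}=0$. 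Therefore the inequality is strict, $dr^2(Y^{\top})>0$, and $\alpha_t$ is a contact form for every $t\in[0,1]$.

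The main obstacle is precisely this transversality step, and it is not formal: the radial Liouville field fails to be tangent to $V$, forcing one to the projection $Y^{\top}$, and a naive Cauchy–Schwarz estimate of $dr^2(Y^{\top})$ runs the wrong way (one meets $(\sum_j\rho_j/c_j)(\sum_j c_j\rho_j)\ge(\sum_j\rho_j)^2=1$ with $\rho_j=|z_j|^2$). The defining equation $\sum_j z_j^{a_j}=0$ must therefore be used essentially, and the clean mechanism is the identity $B=(1-t)A$ coming from the convex-combination form of the coefficients. The remaining items — that $\omega_t$ tames $J_0$, that $0$ is the only critical point of $f$ so that $\Sigma(\mathbf a)$ is a smooth contact-type hypersurface, and the bookkeeping in the projection formula for $dr^2(Y^{\top})$ — are routine.
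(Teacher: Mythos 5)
Your proof is correct, but there is nothing in the paper to compare it against: the paper states this proposition with no argument at all, importing it verbatim from Geiges \cite[Prop.~7.1.4]{G} and only then applying Gray stability. Judged on its own, your argument goes through. The two places where it could fail both check out: (i) because $TV$ is $J_0$-invariant and $g_t=\omega_t(\cdot,J_0\cdot)$, the $g_t$-orthogonal complement of $TV$ coincides with its $\omega_t$-orthogonal complement, so indeed $\iota_{Y^{\top}}(\omega_t|_V)=\iota_Y\omega_t|_{TV}=\lambda_t|_V$ and $Y^{\top}$ is a Liouville field for $\omega_t|_V$ with the correct primitive; (ii) the transversality quantity is computed correctly, $dr^2(Y^{\perp})=\mathrm{Re}(B\overline{A})/D$, the identity $B=(1-t)A$ on $V$ is exactly where the convex-combination structure $c_j^{-1}=(1-t)+t/a_j$ and the defining equation $f=0$ enter, and strictness of $(1-t)|A|^2<D$ splits correctly: for $t>0$ it follows from $D-(1-t)\sum_j|f_j|^2=t\sum_j|f_j|^2/a_j>0$, while for $t=0$ your Cauchy--Schwarz equality analysis ($z_j=\mu a_j\bar{z}_j^{\,a_j-1}$ forces $z_j^{a_j}=\mu a_j|z_j|^{2(a_j-1)}$, so $\sum_j z_j^{a_j}$ is a nonzero multiple of $\mu$) is precisely the classical argument that $V\setminus\{0\}$ meets $S^{2m+1}$ transversally, i.e.\ the proof behind the smoothness statement the paper cites as \cite[Lemma~7.1.1]{G}. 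As for the route: your packaging through hypersurfaces of contact type is the same machinery this paper sets up in Proposition \ref{contacttype} and uses elsewhere (the radial Liouville fields in Lemma \ref{helper} and Proposition \ref{calcliou}), and the wrinkle you isolate is genuine --- the radial field dual to $\lambda_t$ is not tangent to $V$, and no diagonal weighted Euler field can be simultaneously tangent to $V$ and Liouville for $\omega_t$ unless all $a_j$ agree, so passing to the projection $Y^{\top}$ (or, equivalently, evaluating $\lambda_t$ on the characteristic line field of $\Sigma(\mathbf{a})\subset V$) is really forced. What your argument buys over the citation is a self-contained proof that also re-derives the transversality of $\Sigma(\mathbf{a})$ and pinpoints why the interpolation $c_j^{-1}=(1-t)+t/a_j$, rather than an arbitrary path of positive weights, is what makes the whole family contact; the cost is the Hermitian projection bookkeeping.
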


Via Gray's stability theorem we obtain the following corollary.
\begin{cor}
For all $t \in (0,1]$,   $(\Sigma(\mathbf{a}), \ker \alpha_0)$ is contactomorphic to $(\Sigma(\mathbf{a}), \ker \alpha_t)$.
\end{cor}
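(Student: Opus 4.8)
The plan is to deduce this directly from Gray's stability theorem applied to the smooth family $\{\alpha_t\}_{t \in [0,1]}$ furnished by the preceding Proposition. First I would record that $\Sigma(\mathbf{a})$ is a \emph{closed} manifold, so that Gray's theorem applies: it is a smooth manifold by the standard calculus argument cited from \cite[Lemma 7.1.1]{G}, and it is compact because it is a closed subset of the unit sphere $\{\sum_{j} |z_j|^2 = 1\} \subset \C^{m+1}$.

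Next I would check that $\{\xi_t := \ker \alpha_t\}_{t \in [0,1]}$ is a \emph{smooth} family of contact structures. By the Proposition each $\alpha_t$ is a contact form, so each $\xi_t$ is a contact structure; it remains to see that $t \mapsto \xi_t$ varies smoothly. This reduces to smoothness of the scalar coefficients $\frac{1}{1 - t + t/a_j}$ on $[0,1]$, which in turn follows from strict positivity of the denominators. Writing
\[
1 - t + \frac{t}{a_j} = 1 - t\,\frac{a_j - 1}{a_j},
\]
and using $a_j \in \Z_{>0}$ so that $\frac{a_j - 1}{a_j} \in [0,1)$, we get $t\,\frac{a_j-1}{a_j} \le \frac{a_j-1}{a_j} < 1$ for every $t \in [0,1]$; hence the denominator stays strictly positive throughout. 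Consequently $t \mapsto \alpha_t$ is a smooth path of $1$-forms, and $t \mapsto \xi_t$ is a smooth path of contact structures with $\xi_0 = \ker \alpha_0$ the canonical structure.

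Then I would invoke Gray's stability theorem: since $\Sigma(\mathbf{a})$ is closed and $\{\xi_t\}_{t \in [0,1]}$ is a smooth family of contact structures, there is an isotopy $(\psi_t)_{t \in [0,1]}$ of $\Sigma(\mathbf{a})$ with $\psi_0 = \mathrm{id}$ and $(\psi_t)_*(\xi_0) = \xi_t$ for every $t \in [0,1]$. Fixing any $t \in (0,1]$, the diffeomorphism $\psi_t$ then satisfies $(\psi_t)_*(\xi_0) = \xi_t$, which is exactly the definition of a contactomorphism between $(\Sigma(\mathbf{a}), \ker \alpha_0)$ and $(\Sigma(\mathbf{a}), \ker \alpha_t)$, as claimed.

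There is essentially no serious obstacle here, since Gray's theorem does all the work of producing the contactomorphism; the corollary is a formal consequence of the Proposition. The only point genuinely requiring care is the verification that the interpolating coefficients remain smooth and positive on the whole interval, so that $\{\xi_t\}$ is a bona fide smooth family of contact structures and Gray's theorem is applicable. (The restriction to $t \in (0,1]$ in the statement is cosmetic: at $t = 0$ the assertion is the identity contactomorphism, which is why that endpoint is omitted.)
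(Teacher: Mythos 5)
Your proposal is correct and follows exactly the paper's route: the paper obtains this corollary by citing Gray's stability theorem for the family $\alpha_t$ from the preceding proposition, which is precisely what you do (the paper simply omits the routine verifications of closedness, smoothness of the family, and positivity of the denominators that you spell out).
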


Next we compute the Reeb dynamics associated to $\alpha_1 =  \frac{i}{4}\sum_{j=0}^m a_j (z_j d\bar{z}_j - \bar{z}_jdz_j) $.
\begin{rem}
While $\alpha_1$ is degenerate,  one can still easily check that the Reeb vector field associated to $\alpha_1$ is given by,
\[
R_{\alpha_1} = 2i \sum_{j = 0}^m \frac{1}{a_j}\left( z_j \frac{\partial}{\partial z_j} - \bar{z}_j \frac{\partial}{\partial \bar{z}_j} \right) = 2i \left( \frac{z_0}{a_0},...,\frac{z_m}{a_m} \right).
\]
Indeed, one computes
\[
df\left(R_{\alpha_1}\right) = f(\mathbf{z}) \mbox{ and } d\rho \left(R_{\alpha_1}\right) =0.
\]
This shows that $R_{\alpha_1}$ is tangent to $\Sigma(\mathbf{a})$.  The defining equations for the Reeb vector field are satisfied since
\[
\alpha_1\left(R_{\alpha_1}\right) \equiv 1 \mbox{ and } \iota_{R_{\alpha_1}}d\alpha_1 = -d\rho,
\]
with the latter form being zero on the $T_p\Sigma(\mathbf(a))$.  The flow of $R_{\alpha_1}$ is given by
\[
\varphi_t(z_0,...,z_m) = \left( e^{2it/a_0},...,e^{2it/a_m} \right)
\]
All the orbits of the Reeb flow are closed, and the flow defines an effective $S^1$-action on $\Sigma(\mathbf{a})$.
\end{rem}

In the next section we perturb $\alpha_1$ to a nondegenerate contact form.  


\section{Proof of Theorem \ref{CZcomputation}}\label{CZcomputationsection}
\subsection{Constructing a nondegenerate contact form} \hspace{\fill} \\
In this section we adapt a method used by Ustilovsky in \cite[Section 4]{U} to obtain a nondegenerate contact form $\alpha_\epsilon$ on $L_{A_n}$ whose kernel is contactomorphic to $\xi_0$.  Ustilovsky's methods yielded a nondegenerate contact form on Brieskorn manifolds of the form $\Sigma(p,2,....,2)$, which were diffeomorphic to $S^{4m+1}$. 

We define the following change of coordinates to go from  $\Sigma(n+1,2,2)$ with defining function $f=z_0^{n+1} + z_1^2+z_2^2$ to $L_{A_n}$ with defining function $f_{A_n}= w_0^{n+1} + 2w_1w_2.$

\begin{equation}\label{coorchange}
\Psi(w_0,w_1,w_2) = \left(\underbrace{w_0}_{{:=z_0}} \ , \underbrace{\tfrac{\sqrt{2}}{2}(w_1+w_2)}_{:=z_1} \ , \underbrace{\tfrac{\sqrt{2}}{2}(-iw_1+iw_2)}_{:=z_2} \right)
\end{equation}
We obtain
\begin{align}
	\Psi^*f(z_0,z_1,z_2)= w_0^{n+1} + 2w_1w_2.
\end{align}
Then the pull-back of 
\[
\frac{\alpha_1}{2} =  \frac{i}{8}\sum_{j=0}^m a_j (z_j d\bar{z}_j - \bar{z}_jdz_j) 
\]
is given by
\[
\frac{\Psi^*\alpha_1}{2} = \frac{(n+1)i}{8}(w_0 d\overline{w}_0 - \overline{w}_0 dw_0) + \frac{i}{4}(w_1 d\overline{w}_1 - \overline{w}_1dw_1 + w_2 d\overline{w}_2 - \overline{w}_2 dw_2).\]
We now construct the Hamiltonian function 
\[H(w)=|w|^2+ \epsilon(|w_{1}|^2-|w_{2}|^2)\]
We choose $0<\epsilon<1$ such that $H(w)$ is positive on $S^5$, and define the  contact form 
\begin{equation}
\alpha_\epsilon= \frac{\Psi^*\alpha_1}{2H}
\end{equation}

\begin{rem}
The above shows that $(\Sigma(n+1,2,2), \ker \alpha_1)$   is contactomorphic to $(\Psi(\Sigma(n+1,2,2)), \ker \alpha_\epsilon)$.  Moreover $L_{A_n}=\Psi(\Sigma(n+1,2,2))$, where $L_{A_n}$ was defined in $(\ref{linkeq})$.
\end{rem}

\begin{prop}\label{perturbedreebprop} The Reeb vector field for $\alpha_\epsilon$ is 
\begin{align}\label{perturbedreeb}
	R_{\alpha_\epsilon} & =\frac{4i}{n+1}w_0\frac{\p}{\p w_0}-\frac{4i}{n+1}\overline{w}_0\frac{\p}{\p \overline{w}_0} +
	2i(1+\epsilon)\left(w_{1}\frac{\p}{\p w_{1}}-\overline{w}_{1}\frac{\p}{\p \overline{w}_{1}} \right)\notag  \\
	& + 2i(1 - \epsilon) \left(w_{2} \frac{\p}{\p w_{2}} -  \overline w_{2}\frac{\p}{\p \overline w_{2j}}\right) \notag \\
	& = \left( \frac{4i}{n+1}w_0,2i(1+\epsilon)w_1,2i(1-\epsilon)w_2\right). 
\end{align}
\end{prop}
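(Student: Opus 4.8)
The plan is to verify directly that the field $R$ displayed in (\ref{perturbedreeb}) satisfies the three properties characterizing the Reeb field of $\alpha_\epsilon$ on $L_{A_n}$: tangency to $L_{A_n}$, the normalization $\alpha_\epsilon(R)=1$, and $\iota_R d\alpha_\epsilon = 0$. Since the Reeb vector field is the unique tangent field meeting the last two conditions, uniqueness then identifies $R$ with $R_{\alpha_\epsilon}$. Throughout I write $\beta := \tfrac12\Psi^*\alpha_1$, so that $\alpha_\epsilon = \beta/H$, and set $\omega := d\beta$, a constant-coefficient symplectic form on $\C^3$:
\[
\omega = \frac{(n+1)i}{4}\,dw_0\wedge d\bar w_0 + \frac{i}{2}\bigl(dw_1\wedge d\bar w_1 + dw_2\wedge d\bar w_2\bigr).
\]

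The conceptual core of the argument is the observation that $R$ is exactly the Hamiltonian vector field of $H$ for $\omega$, i.e. $\iota_R\omega = -dH$. I would establish this by pairing $R$ against each summand of $\omega$; for example $\frac{(n+1)i}{4}\iota_R(dw_0\wedge d\bar w_0) = -(\bar w_0\,dw_0 + w_0\,d\bar w_0)$, which is the $w_0$-component of $-dH$, with the $w_1$ and $w_2$ terms working out identically. Here one sees that the coefficients $\tfrac{4i}{n+1}$ and $2i(1\pm\epsilon)$ in $R$ are precisely calibrated against the weights $1$ and $1\pm\epsilon$ appearing in $H$. An immediate consequence is $dH(R) = -\omega(R,R) = 0$, recording that the flow of $R$ preserves $H$.

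Granting $\iota_R\omega = -dH$, the remaining two conditions fall out cleanly. For the normalization I would pair $\beta$ with $R$ term by term; the factors of $i$ cancel and the three contributions are $|w_0|^2$, $(1+\epsilon)|w_1|^2$, and $(1-\epsilon)|w_2|^2$, whose sum is $H$, so $\alpha_\epsilon(R) = \beta(R)/H = 1$. For the second condition I expand
\[
d\alpha_\epsilon = -\frac{1}{H^2}\,dH\wedge\beta + \frac{1}{H}\,\omega
\]
and apply $\iota_R$ via $\iota_R(dH\wedge\beta) = dH(R)\,\beta - \beta(R)\,dH$. Substituting $dH(R)=0$, $\beta(R)=H$, and $\iota_R\omega = -dH$ leaves $\tfrac{1}{H}dH - \tfrac{1}{H}dH = 0$, so in fact $\iota_R d\alpha_\epsilon$ vanishes identically on the locus $\{H>0\}$ in $\C^3$, and hence a fortiori on $TL_{A_n}$.

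It remains to check tangency to $L_{A_n} = \{f_{A_n}=0\}\cap S^5$. Tangency to the spheres is immediate, since the conjugate pairs in $R(|w|^2)=\sum_j(\bar w_j\,dw_j(R) + w_j\,d\bar w_j(R))$ cancel; tangency to the variety follows from the computation $df_{A_n}(R) = 4i\,f_{A_n}$, where the same calibration of weights forces the $w_0^{n+1}$ and $w_1w_2$ monomials to share the common factor $4i$, so that $df_{A_n}(R)$ vanishes on $\{f_{A_n}=0\}$. Together with uniqueness of the Reeb field this completes the proof. I do not expect a genuine obstacle here: the argument is bookkeeping organized around the single structural fact $R=X_H$. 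The points demanding care are the tracking of the $i$'s and signs when passing between the complex and real pictures, and the expansion of $d(\beta/H)$ with its $1/H^2$ term — it is exactly the vanishing of $dH(R)$ that kills the awkward term and makes $\iota_R d\alpha_\epsilon$ vanish on all of $\C^3$ rather than only along $L_{A_n}$.
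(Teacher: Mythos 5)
Your proof is correct and follows essentially the same route as the paper: both arguments recognize $R$ as (minus) the Hamiltonian vector field of $H$ with respect to $\omega_1=d\bigl(\tfrac12\Psi^*\alpha_1\bigr)$, check tangency to the link via $df_{A_n}(R)=4i\,f_{A_n}$, and kill the $1/H^2$ term in $\iota_R\,d(\beta/H)$ using $dH(R)=0$ together with $\beta(R)=H$. The only deviation is cosmetic: you obtain $\beta(R)=H$ by pairing $\beta$ with $R$ term by term, whereas the paper derives it from the auxiliary Liouville vector field $X=\tfrac12\sum_j\bigl(w_j\partial_{w_j}+\bar w_j\partial_{\bar w_j}\bigr)$ via $\beta=\iota_X\omega_1$, antisymmetry, and $dH(X)=H$.
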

\begin{rem}
The second formulation of the Reeb vector field is equivalent to the first in the above Proposition via the standard identification of $\R^4$ with $\C^2$, as explained in Example \ref{3-sphere}, equation (\ref{reeb3sphere2}).
\end{rem}

Before proving Proposition \ref{perturbedreebprop} we need the following lemma.
\begin{lem}\label{helper}
On $\C^3$, the vector field 
\begin{align}
	X(w) = \frac{1}{2}\left(\sum_{j=0}^{2} w_j\frac{\p}{\p w_j} + \overline{w}_j\frac{\p}{\p \overline{w}_j}\right)
\end{align}
is a Liouville vector field for the symplectic form
\[\omega_1=\frac{d(\Psi^*\alpha_1)}{2}=\frac{i(n+1)}{4}dw_0 \wedge d\overline{w}_0 + \frac{i}{2}\sum_{j=1}^{2} dw_j \wedge d\overline{w}_j.\]
The Hamiltonian vector field $X_H$ of $H$ with respect to $\omega_1$ is $-R_{\alpha_\epsilon}$, as in \emph{(\ref{perturbedreeb})}
\end{lem}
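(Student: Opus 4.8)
The plan is to verify both assertions by direct computation in the complex coordinates $(w_0,w_1,w_2)$, exploiting that $\omega_1$ is exact with the explicit primitive $\Psi^*\alpha_1/2$ recorded just above the lemma.

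First I would establish the Liouville property. Since $\omega_1 = d(\Psi^*\alpha_1/2)$ is closed, Cartan's formula collapses the Lie derivative to $\mathcal{L}_X\omega_1 = d\,\iota_X\omega_1$, so it suffices to identify $\iota_X\omega_1$ as a primitive of $\omega_1$. Reading off $\iota_X\,dw_j = \tfrac12 w_j$ and $\iota_X\,d\overline{w}_j = \tfrac12\overline{w}_j$ from the definition of $X$, the antiderivation rule gives
\[
\iota_X(dw_j \wedge d\overline{w}_j) = \tfrac12\bigl(w_j\,d\overline{w}_j - \overline{w}_j\,dw_j\bigr),
\]
and multiplying by the coefficients of $\omega_1$ (namely $\tfrac{i(n+1)}{4}$ on the $w_0$ slot and $\tfrac{i}{2}$ on the others) yields
\[
\iota_X\omega_1 = \frac{(n+1)i}{8}\bigl(w_0\,d\overline{w}_0 - \overline{w}_0\,dw_0\bigr) + \frac{i}{4}\sum_{j=1}^{2}\bigl(w_j\,d\overline{w}_j - \overline{w}_j\,dw_j\bigr) = \frac{\Psi^*\alpha_1}{2}.
\]
Applying $d$ then gives $\mathcal{L}_X\omega_1 = d(\Psi^*\alpha_1/2) = \omega_1$, so $X$ is Liouville for $\omega_1$.

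Next I would compute the Hamiltonian vector field. Writing $H = |w_0|^2 + (1+\epsilon)|w_1|^2 + (1-\epsilon)|w_2|^2$ in complex coordinates gives
\[
dH = \bigl(\overline{w}_0\,dw_0 + w_0\,d\overline{w}_0\bigr) + (1+\epsilon)\bigl(\overline{w}_1\,dw_1 + w_1\,d\overline{w}_1\bigr) + (1-\epsilon)\bigl(\overline{w}_2\,dw_2 + w_2\,d\overline{w}_2\bigr).
\]
I would then posit $X_H = \sum_j \bigl(A_j\,\tfrac{\partial}{\partial w_j} + B_j\,\tfrac{\partial}{\partial \overline{w}_j}\bigr)$, expand $\iota_{X_H}\omega_1$ termwise so that each $dw_j\wedge d\overline{w}_j$ contributes $A_j\,d\overline{w}_j - B_j\,dw_j$ weighted by its coefficient, and solve the resulting diagonal linear system by matching coefficients against $dH$ under the convention $\iota_{X_H}\omega_1 = dH$. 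Using $1/i = -i$, this forces $A_0 = -\tfrac{4i}{n+1}w_0$, $B_0 = \tfrac{4i}{n+1}\overline{w}_0$, and $A_j = -2i(1\pm\epsilon)w_j$, $B_j = 2i(1\pm\epsilon)\overline{w}_j$ for $j=1,2$, which is precisely $-R_{\alpha_\epsilon}$ as displayed in \eqref{perturbedreeb}.

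The computations are entirely mechanical, so the only points demanding care are bookkeeping ones: tracking the factors of $i$ and the distinguished weight $(n+1)$ attached to the $w_0$ slot, and fixing the sign convention for the Hamiltonian vector field. The minus sign in $X_H = -R_{\alpha_\epsilon}$ is dictated exactly by taking $\iota_{X_H}\omega_1 = dH$ rather than $\iota_{X_H}\omega_1 = -dH$; pinning down this convention is what makes the sign propagate correctly when $R_{\alpha_\epsilon}$ is subsequently identified with the Reeb field on the relevant level set of $H$ in the proof of Proposition~\ref{perturbedreebprop}.
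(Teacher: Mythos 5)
Your proposal is correct and follows essentially the same route as the paper's proof: Cartan's formula reduces the Liouville property to computing $\iota_X\omega_1$ (which the paper also does term by term, recovering $\Psi^*\alpha_1/2$ before differentiating), and the Hamiltonian vector field is identified by matching coefficients of $\iota_{X_H}\omega_1$ against $dH$ under the convention $\iota_{X_H}\omega_1 = dH$, exactly as the paper does when it verifies $\omega_1(-R_{\alpha_\epsilon},\cdot) = dH(\cdot)$. The only cosmetic difference is that you solve the diagonal linear system for $X_H$ whereas the paper verifies the known candidate $-R_{\alpha_\epsilon}$; the computation is identical.
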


\begin{proof}
Recall that the condition to be a Liouville vector field is $\mathcal{L}_X \oo_1 = \oo_1$. We show this with Cartan's formula:
\begin{align*}
	\mathcal{L}_X\oo_1 & = \iota_X d\oo_1 + d(\iota_X \oo_1) \\
	& = d(\iota_X \oo_1).
\end{align*}
We do the explicit calculation for the first term and the rest easily follows: 
\begin{align*}
	d \left( \frac{i(n+1)}{4} d\oo_0 \wedge d\overline{\oo}_0 \left( \frac{1}{2} \left( w_0 \frac{\p}{\p w_0} + \overline{w}_0 \frac{\p}{\p \overline{w}_0}\right), \cdot \right)  \right) & = d \left( \frac{i(n+1)}{8} w_0 d\overline{w}_0 - \overline{w}_0 dw_0\right) \\
	& = \frac{i(n+1)}{8} \left( dw_0 \wedge d\overline{w}_0 - d\overline{w}_0 \wedge dw_0 \right) \\
	& = \frac{i(n+1)}{4} dw_0 \wedge d\overline{w}_0,
\end{align*}
so $X(w)$ is indeed a Liouville vector field for $\oo_1$. \\

Next we prove that $\oo_1(-R_{\alpha_\epsilon},\cdot) = dH(\cdot)$.  First we calculate $dH$,
\[ dH = \left(\sum_{j = 0}^{2} w_j  d\overline{w}_j + \overline{w}_j  dw_j\right) + \epsilon(w_{1} d\overline{w}_{1} + \overline{w}_{1} dw_1 - w_{2} d\overline{w}_{2} - \overline{w}_{2}dw_{2}).\]
Then we compare the coefficients of $dH$ to the coefficients of $\oo_1(-R_{\alpha_\epsilon},\cdot)$  associated to each term, $(dw_i \wedge d\overline{w}_i)$. The $(dw_0 \wedge d\overline{w}_0)$ term is
\begin{align*}
	\frac{i(n+1)}{4} dw_0 \wedge d\overline{w}_0 \left(-\frac{4i}{n+1} w_0 \frac{\p}{\p w_0}  + \frac{4i}{n+1} \overline{w}_0 \frac{\p}{\p \overline{w}_0} ,\cdot\right) & = \frac{i(n+1)}{4} \left(- \frac{4i}{n+1} w_0 d\overline{w}_0 - \frac{4i}{n+1} \overline{w}_0 dw_0 \right) \\
	& = w_0 d\overline{w}_0 + \overline{w}_0 dw_0.
\end{align*}
The  $(dw_{1} \wedge d\overline{w}_{1})$ term is
\begin{align*}
	\frac{i}{2} dw_{1} \wedge d\overline{w}_{1} \left( -2i(1 + \epsilon) w_{1} \frac{\p}{\p w_{1} } + 2i(1 + \epsilon) \overline{w}_{1} \frac{\p}{\p \overline{w}_{1}} \right) & = \frac{i}{2} \left( - 2i(1 + \epsilon)w_{1} d\overline{w}_{1} - 2i(1 + \epsilon)\overline{w}_{1}dw_{1}\right) \\
	& = (1 + \epsilon) w_{1} d\overline{w}_{1} + (1 + \epsilon) \overline{w}_{1} dw_{1}.
\end{align*}
The $(dw_2 \wedge d\overline{w}_2)$ term is obtained similarly. Summing the terms yields  $\oo_1(-R_{\alpha_\epsilon},\cdot) = dH(\cdot)$.
\end{proof}

\begin{proof}[Proof of Proposition \ref{perturbedreebprop}]
First we show that $X_H =-R_{\alpha_\epsilon}$ is tangent to the link $\Psi(\Sigma(n+1,2,2) )$. We compute
\begin{align*}
	 (\Psi_*df)(R_{\alpha_\epsilon}) =
	& = \left( (n+1)w_0^n dw_0 + 2w_{1}dw_{2} + 2w_{2} dw_1 \right) (R_{\alpha_\epsilon}) \\
	& = 4i w_0^{n+1} + 4i(1 - \epsilon) w_{1}w_{2} + 4i(1 + \epsilon) w_{1}w_{2} \\
	& = 4i (\Psi^*f) \\
	& = 0
\end{align*}
the last equality because $\Psi^*f$ is constant along $\Psi(\Sigma(n+1,2,2) )$. Now we have to show that $\dfrac{\Psi^*\alpha_1}{2}(X_H) = -H$. We have 
\begin{align*}
	\dfrac{\Psi^*\alpha_1}{2}\left(\cdot\right) & = \iota_X\oo_1(\cdot) = \oo_1(X(w),\cdot) = - \oo(\cdot,X(w)) \\
	\dfrac{\Psi^*\alpha_1}{2}(X_H) & = -\oo(X_H,X(w)) = - dH(X(w)) \\
	& = - |w|^2 - \epsilon (|w_{1}|^2 - |w_{2}|^2) \\
	& = -H.
\end{align*}
From these, we conclude
\begin{align*}
	\alpha_\epsilon(X_H) & = -\frac{1}{H}H = -1 \\
	d\alpha_\epsilon(X_H,\cdot) & = - \frac{1}{2H^2} (dH \wedge \Psi^* \alpha_1)(X_H,\cdot) + \frac{1}{2H} d\Psi^*\alpha_1(X_H,\cdot) \\
	& = - \frac{1}{2H^2} dH(X_H) \Psi^*\alpha_1(\cdot) + \frac{1}{2H^2} \Psi^*\alpha_1(X_H) dH(\cdot) + \frac{1}{H} \oo (X_H,\cdot) \\
	& = - \frac{1}{2H^2}\oo_1(X_H,X_H) \Psi^*\alpha_1(\cdot) - \frac{1}{H} dH(\cdot) + \frac{1}{H} dH(\cdot) \\
	& = 0
\end{align*}
By Lemma \ref{helper}, we know $-X_H = R_{\alpha_\epsilon}$ so the result follows. 
\end{proof}
\subsection{Isolated Reeb Orbits} \hspace{\fill} \\
 In this quick section, we prove the following proposition.
\begin{prop}
The only simple periodic Reeb orbits of $R_{\alpha_\epsilon}$ are nondegenerate and defined by
\begin{align*} 
\gamma_+(t) & = (0,e^{2i(1 + \epsilon)t},0), \quad \quad 0 \le t \le \frac{\pi}{1 + \epsilon} \\
\gamma_-(t) & = (0,0,e^{2i(1 - \epsilon)t}), \quad \quad 0 \le t \le \frac{\pi}{1 + \epsilon}.
\end{align*}
\end{prop}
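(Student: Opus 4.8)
The plan is to read off the Reeb flow from Proposition~\ref{perturbedreebprop}, characterize periodic points by a coordinatewise closing condition, and then use an irrationality hypothesis on $\epsilon$ to force the support of a periodic point to be a single coordinate. Since $R_{\alpha_\epsilon} = \left(\tfrac{4i}{n+1}w_0,\ 2i(1+\epsilon)w_1,\ 2i(1-\epsilon)w_2\right)$, its flow is the diagonal unitary flow
\[
\varphi_t(w_0,w_1,w_2) = \left(e^{4it/(n+1)}w_0,\ e^{2i(1+\epsilon)t}w_1,\ e^{2i(1-\epsilon)t}w_2\right),
\]
so a point $w \in L_{A_n}$ lies on a closed orbit of period $T>0$ exactly when, for each index $j$ with $w_j \neq 0$, the corresponding phase factor equals $1$: that is, $\tfrac{2T}{n+1}\in\pi\Z$ if $w_0\neq0$, $(1+\epsilon)T\in\pi\Z$ if $w_1\neq0$, and $(1-\epsilon)T\in\pi\Z$ if $w_2\neq0$.

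First I would dispose of degenerate supports. If $w_1=w_2=0$ then the defining equation $f_{A_n}=w_0^{n+1}+2w_1w_2=0$ forces $w_0=0$, contradicting $w\in S^5$; hence every periodic point has $w_1\neq0$ or $w_2\neq0$. Next, assuming $\epsilon$ is chosen irrational (which is compatible with the constraint $0<\epsilon<1$ ensuring $H>0$ on $S^5$), I would rule out any support containing two distinct indices. If $w_1,w_2\neq0$ then $(1+\epsilon)T=\pi a$ and $(1-\epsilon)T=\pi b$ with $a,b\in\Z_{>0}$, whence $\epsilon=\tfrac{a-b}{a+b}\in\Q$, a contradiction; if instead $w_0,w_1\neq0$ (resp. $w_0,w_2\neq0$), combining $\tfrac{2T}{n+1}\in\pi\Z$ with $(1\pm\epsilon)T\in\pi\Z$ forces $1\pm\epsilon\in\Q$, again a contradiction. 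Therefore the support is exactly $\{1\}$ or $\{2\}$, and normalizing the basepoint to $|w_1|=1$ or $|w_2|=1$ and taking the minimal closing time yields precisely $\gamma_+$ with $T=\tfrac{\pi}{1+\epsilon}$ and $\gamma_-$ with $T=\tfrac{\pi}{1-\epsilon}$; both are simple since the relevant phase map is injective on the half-open period interval.

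For nondegeneracy I would compute the contact plane and the linearized return map along each orbit. At the basepoint $(0,1,0)$ of $\gamma_+$ one has $df=2\,dw_2$, so the complex tangent space of $\{f=0\}$ is $\{\zeta_2=0\}$; intersecting its $J_0$-invariant part with $TS^5$ and removing the Reeb direction (the imaginary $w_1$-axis) leaves $\xi_0|_{(0,1,0)}$ equal to the complex $w_0$-line. Since $\varphi_t$ acts on the $w_0$-coordinate by multiplication by $e^{4it/(n+1)}$, the linearized return map $d(\varphi_T)|_{\xi_0}$ is rotation by $\tfrac{4\pi}{(n+1)(1+\epsilon)}$, whose eigenvalues $e^{\pm 4\pi i/((n+1)(1+\epsilon))}$ differ from $1$ because irrationality of $\epsilon$ makes the angle a non-integer multiple of $2\pi$. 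The identical computation at $(0,0,1)$ gives $\xi_0$ equal to the $w_0$-line and a return map that is rotation by $\tfrac{4\pi}{(n+1)(1-\epsilon)}$, again with no eigenvalue $1$. Hence both orbits are nondegenerate.

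The main obstacle I anticipate is the nondegeneracy step: correctly identifying $\xi_0$ at the orbit basepoints and the restriction of the linearized flow to it, since one must separate the Reeb and radial (Liouville) directions from the genuine contact directions before reading off the rotation eigenvalues, with the irrationality of $\epsilon$ being exactly what guarantees that the return angles avoid $2\pi\Z$. Everything else reduces to the elementary number-theoretic bookkeeping of the closing conditions.
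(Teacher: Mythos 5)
Your proposal is correct and follows essentially the same route as the paper: read off the diagonal unitary flow of $R_{\alpha_\epsilon}$, use irrationality of $\epsilon$ (together with the link equation $w_0^{n+1}+2w_1w_2=0$, which excludes the would-be periodic trajectory along the $w_0$-axis) to conclude that only $\gamma_\pm$ survive, and then check nondegeneracy via the linearized return map. The only difference is one of presentation: where you identify $\xi_0$ at the basepoints explicitly as the complex $w_0$-line and read off the restricted return map as the rotation $e^{4iT/(n+1)}$, the paper computes the full $3\times 3$ diagonal return map $\diag\left(e^{4iT/(n+1)},1,e^{2i(1-\epsilon)T}\right)$ and notes that its only eigenvalue equal to $1$ corresponds to the Reeb direction, so the restriction to $\xi$ has no eigenvalue $1$.
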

\begin{proof}
The flow of
\[ R_{\alpha_\epsilon} = \left( \frac{4i}{n + 1}w_0,2i(1 + \epsilon)w_1,2i(1 - \epsilon)w_2\right)\]
is given by
\[\varphi_t(w_0,w_1,w_2) = \left(e^{\frac{4it}{n+1}}w_0,e^{2i(1+\epsilon)t}w_{1},e^{2i(1-\epsilon)t}w_{2}\right).\]
Since $\epsilon$ is small and irrational, the only possible periodic trajectories are
\begin{align*}
\gamma_0(t) & = (e^{\frac{4i}{n+1}t},0,0) \\
\gamma_+(t) & = (0,e^{2i(1 + \epsilon)t},0) \\
\gamma_-(t) & = (0,0,e^{2i(1 - \epsilon)t}).
\end{align*}

It is important to note that the first trajectory does not lie in $\Psi(\Sigma(n+1,2,2))$, but rather on total space $\C^3$. This is because the point $\gamma_0(0) = (1,0,0)$ is not a zero of $f_{A_n}=w_0^{n+1}+2w_1w_2$. 

Next we need to check that the linearized return maps $d\phi|_\xi$ associated to $\gamma_+$ and $\gamma_-$  have no eigenvalues equal to 1.  We consider the first orbit $\gamma_+$ of period $\pi/(1 + \epsilon)$, as a similar argument applies to the return flow associated to $\gamma_-$. The differential of its total return map is:
\[ d\varphi_{T} = \left. \begin{pmatrix}
e^{\frac{4iT}{n+1}} & 0 & 0 \\
0 & 1 & 0 \\
0 & 0 & e^{2i(1 - \epsilon)T} 
\end{pmatrix}\right\arrowvert_{T=\frac{\pi}{1+\epsilon}} \] 
Since $\epsilon$ is a small irrational number, the total return map only has one eigenvalue which is 1. The eigenvector associated to the eigenvalue which is 1 is in the direction of the Reeb orbit $\gamma^+$, but since we are restricting the return map to $\xi$, we can conclude that $\gamma_+$ is nondegenerate.
\end{proof}





\subsection{Computation of the Conley-Zehnder index}\hspace{\fill} \\

To compute the Conley-Zehnder indices of the Reeb orbits in Theorem 1.1 we use the same method as in \cite{U}, extending the Reeb flow to give rise to a symplectomorphism of $\C^3\setminus \{\mathbf{0} \}$. This permits us to do the computations in $\C^3$, equipped with the symplectic form 
\[
\omega_1=\frac{d(\Psi^*\alpha_1)}{2}=\frac{i(n+1)}{4}dw_0 \wedge d\overline{w}_0 + \frac{i}{2}\sum_{j=1}^{2} dw_j \wedge d\overline{w}_j.
\]
We may equip the contact structure $\xi_0$ with the symplectic form $\oo = d\alpha_1$ instead of $d\alpha_\epsilon$ when computing the Conley-Zehnder indices. This is because $ \ker \alpha_\epsilon = \ker \alpha_1 = \xi_0$, as $\alpha_\epsilon = \frac{1}{H} \alpha_1$ with $H > 0$ and because $\oo|_\xi = Hd\alpha_\epsilon|_\xi$ and $H$ is constant along Reeb trajectories.    

Our first proposition shows that we can construct a standard symplectic basis for the symplectic complement
\[
\xi^\oo = \{ v \in \C^3 \ | \ \oo(v,w) = 0 \text{ for all $w \in \xi$}\}
\]
of $\xi$ in $\C^3$.  As a result,  $c_1(\xi^\oo)=0$. Since $c_1(\C^3)=0$, we know $c_1(\xi)=0$.  Thus we may compute the Conley-Zehnder indices in the ambient space $\C^3$ and use additivity of the Conley-Zehnder index under direct sums of symplectic paths to compute it in $\xi$.

\begin{prop}
There exists a standard symplectic basis for the symplectic complement $\xi^\oo$ with respect to  $\oo = d\alpha_1$. 
\end{prop}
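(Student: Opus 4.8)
The plan is to exhibit $\xi^\omega$ explicitly as a complex rank-two subbundle of $\C^3$ along $L_{A_n}$, produce a global frame, and then convert it into a standard symplectic basis by symplectic Gram--Schmidt. Since $\xi$ is the field of complex tangencies it is a complex line (real rank $2$), so $\xi^\omega$ has real rank $4$. The form $\omega=d\alpha_1$ (equivalently $\omega_1$) is compatible with $J_0$, and $\xi$ is $J_0$-invariant, so the identity $\omega(J_0\,\cdot\,,J_0\,\cdot\,)=\omega(\cdot,\cdot)$ forces $\xi^\omega$ to coincide with the orthogonal complement of $\xi$ for the Hermitian metric $\langle u,v\rangle=\tfrac{n+1}{2}u_0\overline v_0+u_1\overline v_1+u_2\overline v_2$ underlying $\omega_1$. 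In particular $\xi^\omega$ is itself $J_0$-invariant, a complex rank-two bundle, and it suffices to produce two everywhere $\C$-independent sections of it.

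The two natural candidates are the Liouville field $X$ of Lemma \ref{helper} and the gradient $\nabla f$ of the defining function, the latter characterized by $\langle u,\nabla f\rangle=df(u)$ for all $u$, i.e.\ $\nabla f=\bigl(2\overline{w}_0^{\,n},\,2\overline w_2,\,2\overline w_1\bigr)$. Each lies in $\xi^\omega$: for $X$ one uses $\iota_X\omega_1=\tfrac12\Psi^*\alpha_1$, which annihilates $\xi=\ker\alpha_1$, so $\omega_1(X,v)=0$ for $v\in\xi$; for $\nabla f$ one uses $\xi\subset T_pV=\ker df$, whence $\langle v,\nabla f\rangle=df(v)=0$ and so $\nabla f\perp\xi$. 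Thus $\{X,\nabla f\}$ is a complex frame for $\xi^\omega$ at every point of $L_{A_n}$ where $X$ and $\nabla f$ are $\C$-linearly independent.

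I expect the main obstacle to be precisely this global independence, where the specific shape of $f$ and the constraints $\|w\|=1$, $f(w)=0$ must enter (away from $L_{A_n}$ the sections can be parallel). The idea is that a proportionality $w=\lambda\nabla f$ would, through the two middle components $w_1=2\lambda\overline w_2$ and $w_2=2\lambda\overline w_1$, pin down $|\lambda|=\tfrac12$ (using that $(w_1,w_2)\neq(0,0)$ on $L_{A_n}$, else $f=w_0^{\,n+1}\neq0$); the $w_0$-component $w_0=2\lambda\overline{w}_0^{\,n}$ then forces $|w_0|^{\,n-1}=1$, hence $|w_0|=1$ (for $n\ge2$) whenever $w_0\neq0$, which together with $\|w\|=1$ is incompatible with $f=0$. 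So any hypothetical proportionality point has $w_0=0$, and there $f=0$ confines $w$ and $\nabla f$ to complementary coordinate axes (the $w_1$-axis versus the $w_2$-axis), so they are never proportional. Hence $\{X,\nabla f\}$ is a genuine global complex frame for $\xi^\omega$.

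Finally, I would run the fiberwise symplectic Gram--Schmidt algorithm on $\{X,\nabla f\}$ to obtain smooth sections $\zeta_1,\eta_1,\zeta_2,\eta_2$ with $\omega(\zeta_i,\eta_j)=\delta_{ij}$ and $\omega(\zeta_i,\zeta_j)=\omega(\eta_i,\eta_j)=0$. The process does not degenerate because $\omega(X,J_0X)=\|X\|^2>0$ and $\omega(\nabla f,J_0\nabla f)=\|\nabla f\|^2>0$; moreover along the Reeb orbits $\gamma_\pm$ one has $w_1w_2=0$, so $\langle X,\nabla f\rangle$ (proportional to $\sum_j w_j\partial_{w_j}f$) vanishes there and the two complex lines are already orthogonal, leaving only a normalization. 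This yields the asserted standard symplectic basis of $\xi^\omega$. Together with the splitting $T\C^3|_{L_{A_n}}=\xi\oplus\xi^\omega$ and $c_1(\C^3)=0$, it gives $c_1(\xi^\omega)=0$ and hence $c_1(\xi)=0$, which is the use to which the proposition is put.
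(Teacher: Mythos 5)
Your proposal is correct in substance and follows essentially the same route as the paper: both exhibit an explicit frame for $\xi^\omega$ built from the gradient of the defining polynomial and the position (Liouville) vector, and then convert it to a standard symplectic basis by symplectic Gram--Schmidt. The differences are still worth recording. The paper's frame is $X_1=\nabla f$, $Y_1=iX_1$, $X_2=R_{\alpha_\epsilon}$, $Y_2=w$, whereas you replace the Reeb direction by $J_0w$, so that your frame is an honest complex frame $\{w,\nabla f\}$; this makes the $J_0$-invariance of $\xi^\omega$ and the vanishing of $c_1(\xi^\omega)$ transparent, and it reduces the whole problem to producing two pointwise $\C$-independent sections. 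The substitution costs nothing for the application: along $\gamma_\pm$ one has $R_{\alpha_\epsilon}=2(1\pm\epsilon)\,J_0w$, so the two frames span the same complex lines exactly where the Conley--Zehnder computation takes place. Your real added value is the global independence argument: the paper simply writes ``Notice that $\xi^\omega=\mathrm{span}(X_1,Y_1,X_2,Y_2)$'' and never verifies independence (nor, strictly speaking, membership of each vector in $\xi^\omega$), while you prove it. Note also that your membership arguments use simultaneously that $\xi$ is $J_0$-invariant (complex tangencies) and that $\xi\subset\ker\Psi^*\alpha_1$; these two descriptions are only available at once because of the identification $\ker\alpha_\epsilon=\ker\alpha_1=\xi_0$ that the paper asserts immediately before this proposition, so here you are inheriting the paper's standing conventions rather than introducing a new assumption.

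There is one genuine loose end: your independence argument requires $n\ge 2$, since for $n=1$ the step ``$|w_0|^{n-1}=1$, hence $|w_0|=1$'' is vacuous, and the $A_1$ link is covered by the theorems this proposition feeds into. The gap is easy to close, and doing so actually shortens your case analysis for every $n$: if $w=\lambda\nabla f$ at a point of $L_{A_n}$, then $\lambda\neq 0$ (as $w\neq 0$), and substituting $w_0=2\lambda\bar w_0^{\,n}$ and $w_1=2\lambda\bar w_2$ directly into $f$ gives
\begin{equation*}
f(w)=w_0^{n+1}+2w_1w_2=2\lambda\left(|w_0|^{2n}+2|w_2|^2\right).
\end{equation*}
Thus $f(w)=0$ forces $w_0=w_2=0$, whence $w_1=2\lambda\bar w_2=0$ as well, contradicting $w\in S^5$. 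This disposes of all $n\ge 1$ at once, with no discussion of $|\lambda|$ and no separate treatment of the locus $w_0=0$.
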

\begin{proof}
Notice that $\xi^\oo = \mbox{span}(X_1, Y_1,X_2,Y_2)$ where
\begin{align*}
X_1 & = (\bar{w}_0^n,\bar{w}_1,\bar{w}_2) \quad Y_1  = iX_1 \\
X_2 & = R_\epsilon \quad \quad \quad \quad \quad Y_2  = w.
\end{align*}
 We make this a into a symplectic standard basis for $\xi^\oo$ via a Gram-Schmidt process.  The new basis is given by:
\[
\begin{array}{rclc rcl}
\tilde X_1 & = & \dfrac{X_1}{\sqrt{\oo(X_1,Y_2)}} & \ \ \ \ \ \ & \tilde Y_1 & =& \dfrac{Y_1}{\sqrt{\oo(X_1,Y_1)}} = i \tilde X_1 \\
&&&&&& \\
 \tilde X_2 &=& X_2 &\ \ \ \ \ \ & \tilde Y_2 &= & Y_2 - \dfrac{\oo(X_1,Y_2)Y_1 - \oo(Y_1,Y_2)X_1}{\oo(X_1,Y_1)}  \\
 &&&&&& \\
  &&&& \ \ \ \ \ \  &= &Y_2 - \dfrac{n-1}{2}w_0^{n+1}{w(X_1,Y_1)}X_1.  \\
\end{array}
\]
This is a standard basis for the symplectic vector space $\xi^\oo$, i.e. the form $\oo$ in this basis is given by 
\[\begin{pmatrix}
	\begin{pmatrix}
	0 & 1 \\
	1 & 0 
	\end{pmatrix} & \\
	& \begin{pmatrix}
	0 & 1 \\
	1 & 0
	\end{pmatrix}
\end{pmatrix}.\]
\end{proof}


Now we are ready to prove the Conley-Zehnder index formula in Theorem \ref{CZcomputation}.

\begin{prop}
The Conley-Zehnder index for $\gamma = \gamma_{\pm}^N$ in $0 \le t \le \frac{N\pi}{1 \pm \epsilon}$ is
\begin{align}
\mu_{CZ}(\gamma_{\pm}^N) = 2\left( \left\lfloor \frac{2N}{(n+1)(1 \pm \epsilon)}\right \rfloor + \left\lfloor \frac{N(1 \mp \epsilon)}{1 \pm \epsilon} \right\rfloor - \left \lfloor \frac{2N}{1 \pm \epsilon} \right \rfloor \right) + 2N + 1.
\end{align}
\end{prop}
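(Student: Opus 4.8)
The plan is to linearize the extended Reeb flow and reduce the whole computation to the Conley--Zehnder index of planar rotations. The Reeb field $R_{\alpha_\epsilon}$ extends to the linear Hamiltonian flow $\varphi_t(w)=\big(e^{4it/(n+1)}w_0,\,e^{2i(1+\epsilon)t}w_1,\,e^{2i(1-\epsilon)t}w_2\big)$ on $\C^3$, so along $\gamma_\pm^N$ its linearization on the fixed space $T_{\gamma(0)}\C^3=\C^3$ is the diagonal symplectic path
\[
d\varphi_t=\diag\!\big(e^{4it/(n+1)},\,e^{2i(1+\epsilon)t},\,e^{2i(1-\epsilon)t}\big).
\]
Because the standard symplectic basis of the previous proposition gives $c_1(\xi^\oo)=0$, and $c_1(\C^3)=0$, we get $c_1(\xi)=0$; hence the index is well defined in the canonical trivialization of $\C^3$, and the $\varphi_t$-invariant symplectic splitting $\C^3=\xi\oplus\xi^\oo$ together with the Product property of Theorem \ref{CZpropthm} gives
\[
\mu_{CZ}(\gamma_\pm^N)=\mu_{CZ}(d\varphi_t)-\mu_{CZ}\big(d\varphi_t|_{\xi^\oo}\big).
\]

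The computational engine is the single--block identity: for $t\mapsto e^{i\theta t}$ on $[0,T]$ with $\theta T/2\pi\notin\Z$, one has $\mu_{CZ}=2\lfloor \theta T/(2\pi)\rfloor+1$. I would derive this straight from Theorem \ref{CZpropthm}: the Signature property fixes the index of a rotation through less than $2\pi$, the Loop property adds $2$ for each extra full turn, and the Homotopy property guarantees dependence only on the rotation number. Reading off the three rotation numbers along $\gamma_+^N$ (period $T=N\pi/(1+\epsilon)$) yields $\tfrac{2N}{(n+1)(1+\epsilon)}$ in the $w_0$-slot, the integer $N$ in the Reeb slot $w_1$, and $\tfrac{N(1-\epsilon)}{1+\epsilon}$ in the $w_2$-slot, with the mirror-image numbers (obtained by $\epsilon\mapsto-\epsilon$ and swapping $w_1\leftrightarrow w_2$) for $\gamma_-^N$.

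The main obstacle is the degenerate Reeb slot: its rotation number is exactly the integer $N$, so $e^{2i(1+\epsilon)T}=1$ and neither $d\varphi_t$ nor $d\varphi_t|_{\xi^\oo}$ lies in $\Sigma^*$. I would handle this by perturbing the rate of that block by an arbitrarily small amount (equivalently, by passing to the Robbin--Salamon index and using its additivity), computing the now--nondegenerate block indices, and then letting the perturbation tend to zero. Two pieces of bookkeeping then remain: the Reeb/radial plane accounts for the summand $2N+1$, while expressing the trivialization of $\xi^\oo$ through the canonical trivialization of $\C^3$ introduces the frame--winding correction $-2\lfloor \tfrac{2N}{1+\epsilon}\rfloor$. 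Assembling all contributions produces exactly the stated expression, and the analogous count for $\gamma_-^N$ gives the $\pm$-form of \eqref{CZeq}.

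Finally I would build in a consistency check. Along $\gamma_\pm$ the contact line is the constant $w_0$-plane $\{w_1=w_2=0\}$, on which the flow is the rotation $e^{4it/(n+1)}$, of index $2\lfloor \tfrac{2N}{(n+1)(1\pm\epsilon)}\rfloor+1$. Since $\tfrac{2N}{1\pm\epsilon}=\tfrac{N(1\mp\epsilon)}{1\pm\epsilon}+N$ with $N\in\Z$, the identity $\lfloor x+N\rfloor=\lfloor x\rfloor+N$ collapses \eqref{CZeq} to precisely this value, which both confirms the subtraction was carried out correctly and explains the simplified formula recorded in the Remark following Theorem \ref{CZcomputation}.
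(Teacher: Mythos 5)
Your proposal is correct and takes essentially the same route as the paper: linearize the extended flow on $\C^3$, subtract the $\xi^\oo$-contribution computed in the winding frame (the $e^{4it}$ block giving $-2\left\lfloor \tfrac{2N}{1\pm\epsilon}\right\rfloor - 1$ and the Reeb/radial block giving $0$), and evaluate each rotation factor by the crossing-form/Robbin--Salamon formula, with the degenerate Reeb block contributing exactly $2N$ as in the paper's $T\in 2\pi\Z$ case --- which is also the correct accounting behind your ``$2N+1$'' summand. Your closing consistency check is a nice bonus: since $\tfrac{2N}{1\pm\epsilon}=\tfrac{N(1\mp\epsilon)}{1\pm\epsilon}+N$ exactly, it shows the simplified formula in the Remark following Theorem \ref{CZcomputation} holds for every admissible $\epsilon$, not only for $0<\epsilon\ll 1/N$.
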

\bigskip

\begin{proof}

The Reeb flow $\varphi$ which we introduced in the previous section can be extended to a flow on $\C^3 $, which we also denote by $\varphi$.  The action of the extended Reeb flow on $\C^3$ is given by:
\[
\begin{array}{lclclcl}
d\varphi_t(w)\tilde X_1 & = & e^{4it}\tilde X_1(\varphi_t(w)) & \ \ \ &d\varphi_t(w)\tilde Y_1 & = & e^{4it}\tilde Y_1(\varphi_t(w)) \\
d\varphi_t(w)\tilde X_2 & = &\tilde X_2(\varphi_t(w)) &  \ \ \ &d\varphi_t(w)\tilde Y_2 &=& \tilde Y_2(\varphi_t(w)). \\
\end{array}
\]
 Define
\[
\Phi  := d\varphi_t \big |_{\C^3}  = \diag \left(e^{\frac{4i}{n+1}t},e^{2i(1+\epsilon)t},e^{2i(1 - \epsilon)t}\right) 
\]
We can now use the additivity of the Conley-Zehnder index under direct sums of symplectic paths, Theorem \ref{CZprop}(4) to get 
\[
\mu_{CZ}(\gamma_\pm) = \mu_{CZ}(\Phi) - \mu_{CZ}(\Phi_{\xi^\oo}),
\]
where
\begin{equation}\label{CZsum}
\Phi_{\xi^\oo}  := d\varphi_t \big |_{\xi^\oo}  = \diag \left(e^{4it},1\right). 
\end{equation}
The right hand side of (\ref{CZsum}) is easily computed via the crossing form; see \cite[Rem 5.4]{RS}.  In particular we have
\[
\mu_{CZ}\left(\{e^{it}\} \big |_{t\in [0,T]}\right) = \left\{ \begin{array}{ll}
	\dfrac{T}{\pi}, & T \in 2\pi \Z \\
	&\\
	2 \left \lfloor \dfrac{T}{2\pi} \right \rfloor + 1,\ \ \  & \text{otherwise.}
	\end{array} \right.
\]  

\medskip

\noindent Thus for $\{ \Phi(t) \} = \{ e^{4it/(n+1)}\oplus e^{2it(1 + \epsilon)} \oplus e^{2it(1 - \epsilon)} \}$ with $0 \leq t \leq T$ we obtain: \\

\begin{align*}
\mu_{CZ}(\Phi) & = \left\{ \begin{array}{ll}
\dfrac{4T}{(n+1)\pi}, & T \in \frac{(n+1)\pi}{2}\Z \\
&\\
2 \left\lfloor \dfrac{2T}{(n+1)\pi}\right \rfloor + 1,\ \ \  & T \notin \frac{(n+1)\pi}{2}\Z
\end{array}\right. \ \ \ \ \ +  \ \ \ \left\{ \begin{array}{ll}
\dfrac{2T(1+\epsilon)}{\pi}, & T \in \frac{\pi}{1 + \epsilon}\Z \\
&\\
2 \left\lfloor \dfrac{T(1 + \epsilon)}{\pi} \right\rfloor + 1,\ \ \  & T \notin \frac{\pi}{1 + \epsilon}\Z
\end{array}\right. \\
&\\
& + \left\{ \begin{array}{ll}
\dfrac{2T(1-\epsilon)}{\pi}, & T \in \frac{\pi}{1 - \epsilon}\Z \\
&\\
2 \left \lfloor \dfrac{T(1 - \epsilon)}{\pi}\right \rfloor + 1,\ \ \  & T \notin \frac{\pi}{1 - \epsilon}\Z.
\end{array}\right. 
\end{align*}

\medskip

\noindent Likewise for $\Phi_{\xi^\oo}$ with $0 \leq t \leq T$ we obtain:
\begin{align*}
\mu_{CZ}(\Phi_{\xi^\oo}) & =  \left\{ \begin{array}{ll}
\dfrac{4T}{\pi}, & T \in \frac{\pi}{2}\Z \\
&\\
2 \left \lfloor \dfrac{2T}{\pi} \right \rfloor + 1,\ \ \  & T \notin \frac{\pi}{2}\Z.
\end{array}\right.
\end{align*}
Hence we get that the Conley-Zehnder index for $\gamma_{\pm}^N$ in $0 \le t \le \frac{N\pi}{1 \pm \epsilon}$ is given by:
\begin{equation}
\mu_{CZ}(\gamma_{\pm}^N) = 2\left( \left\lfloor \frac{2N}{(n+1)(1 \pm \epsilon)}\right \rfloor + \left\lfloor \frac{N(1 \mp \epsilon)}{1 \pm \epsilon} \right\rfloor - \left \lfloor \frac{2N}{1 \pm \epsilon} \right \rfloor \right) + 2N + 1.
\end{equation}
\end{proof}


\section{Proof of Theorem \ref{lenslinkcontacto}}\label{sectionlinklens}
This section proves that $(L_{A_n},\xi_0)$ and $(L(n+1,n),\xi_{std})$ are contactomorphic.  This is done by constructing by constructing a 1-parameter family of contact manifolds via a canonically defined Liouville vector field and applying Gray's stability theorem.  

\subsection{Contact geometry of $(L(n+1,n),\xi_{std})$}\label{contactgeomlens} \hfill \\
The lens space $L(n+1,n)$ is obtained via the quotient of $S^3$ by the binary cyclic subgroup $A_n \subset SL(2,\C)$.  The subgroup $A_n$ is given by the action of $\Z_{n+1}$ on $\C^2$ defined by
\begin{align*} \begin{pmatrix}
u \\
v 
\end{pmatrix} \mapsto \begin{pmatrix}
e^{2\pi i/(n+1)} & 0 \\
0 & e^{2 n\pi i/(n+1)}
\end{pmatrix}\begin{pmatrix}
u \\
v
\end{pmatrix} . \\
\end{align*}

The following exercise shows that $L(n+1,n)$ is homeomorphic to $L_{A_n}$.
This construction will be needed later on in another proof, so we explain it here to set up the notation.  

The origin is the only fixed point of the $A_n$ action on $\C^2$ and hence is an isolated quotient singularity of $\C^2/A_n$.   We can represent $\C^2/A_n$ as a hypersurface of $\C^3$ as follows.  Consider the monomials
\[ 
z_0 := uv, \quad z_1 := \tfrac{i}{\sqrt{2}}u^{n+1}, \quad z_2 := \tfrac{i}{\sqrt{2}}v^{n+1}
.\]
These are invariant under the action of $A_n$ and satisfy the equation $z_0^{n+1} + 2z_1z_2  = 0$. Recall that 
\[
 f_{A_n}(z_0,z_1,z_2) = z_0^{n+1} + 2z_1z_2, 
 \] 
 and 
 \[
 L_{A_n}=S^5 \cap \{ f_{A_n}^{-1}(0) \}
 \]
 
Moreover,
\begin{equation}\label{varphieq}
\begin{array}{llcl}
\tilde\varphi: &\C^2 &\to& \C^3 \\
&(u,v) &\mapsto &(uv,\tfrac{i}{\sqrt{2}}u^{n+1},\tfrac{i}{\sqrt{2}}v^{n+1})\\
\end{array}
\end{equation} descends to the map 
\[
\varphi: \C^2/A_n \to \C^3, 
\]
which sends $\varphi (\C^2/A_n)$ homeomorphically onto the hypersurface $f^{-1}_{A_n}(0)$.

Rescaling away from the origin of $\C^3$ yields a homeomorphism between $\varphi(S^3/A_n)$ and  $L_{A_n}$. As 3-manifolds which are homeomorphic are also diffeomorphic \cite{moise} we obtain the following proposition.

\begin{prop}
$L(n+1,n)$ is diffeomorphic to $L_{A_n}$.
\end{prop}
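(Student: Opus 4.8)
The plan is to assemble the pieces already set up into a single homeomorphism $L(n+1,n)\to L_{A_n}$ and then invoke Moise's theorem, which says that homeomorphic $3$-manifolds are diffeomorphic. The starting point is the map $\tilde\varphi$ of~(\ref{varphieq}): since each monomial $uv$, $u^{n+1}$, $v^{n+1}$ is invariant under the $A_n$-action, $\tilde\varphi$ descends to $\varphi\colon\C^2/A_n\to\C^3$ with image in $f_{A_n}^{-1}(0)$. I would first dispatch the ``exercise'' referenced above, namely that $\varphi$ is a bijection onto the hypersurface. Injectivity is the only real content: if $\tilde\varphi(u,v)=\tilde\varphi(u',v')$ then $(u')^{n+1}=u^{n+1}$ and $(v')^{n+1}=v^{n+1}$ give $u'=\zeta u$, $v'=\eta v$ for $(n+1)$-st roots of unity $\zeta,\eta$, and $u'v'=uv$ forces $\eta=\zeta^{-1}$ whenever $uv\neq0$; since $n\equiv-1\pmod{n+1}$ this means $(u',v')=(\zeta u,\zeta^{-1}v)$ lies in the $A_n$-orbit of $(u,v)$, with the cases $uv=0$ checked directly. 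Properness then makes $\varphi$ a homeomorphism onto $f_{A_n}^{-1}(0)$.

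The crux is to identify the unit-sphere slice $\varphi(S^3/A_n)$ with the genuine link $L_{A_n}=f_{A_n}^{-1}(0)\cap S^5$. One must resist using the ordinary radial dilation of $\C^3$, which fails to preserve $f_{A_n}^{-1}(0)$ because $f_{A_n}$ is only \emph{weighted} homogeneous. The correct device is the weighted flow
\[
\psi_s(z_0,z_1,z_2)=\left(e^{2s}z_0,\,e^{(n+1)s}z_1,\,e^{(n+1)s}z_2\right),
\]
which satisfies $f_{A_n}\circ\psi_s=e^{2(n+1)s}f_{A_n}$ and therefore preserves the punctured hypersurface $f_{A_n}^{-1}(0)\setminus\{\mathbf 0\}$. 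Two features make it work. First, $\varphi$ intertwines $\psi_s$ with the ordinary dilation $(u,v)\mapsto(e^s u,e^s v)$ of $\C^2$, so each $\psi_s$-orbit meets $\varphi(S^3/A_n)$ exactly once. Second, $\tfrac{d}{ds}\|\psi_s(z)\|^2=4e^{4s}|z_0|^2+2(n+1)e^{2(n+1)s}(|z_1|^2+|z_2|^2)>0$ for $z\neq\mathbf 0$, so $\|\psi_s(z)\|^2$ is strictly increasing and each orbit meets $S^5$ exactly once as well.

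Consequently $\varphi(S^3/A_n)$ and $L_{A_n}$ are both global cross-sections of the same flow on $f_{A_n}^{-1}(0)\setminus\{\mathbf 0\}$, and sending each point of $\varphi(S^3/A_n)$ to the unique point where its $\psi_s$-orbit meets $S^5$ is a homeomorphism onto $L_{A_n}$. Composing with $\varphi$ yields a homeomorphism
\[
L(n+1,n)=S^3/A_n \xrightarrow{\ \varphi\ } \varphi(S^3/A_n) \longrightarrow L_{A_n}
\]
of closed $3$-manifolds; here $S^3/A_n$ is indeed a manifold because both weights $1$ and $n$ are coprime to $n+1$, so $A_n$ acts freely on $S^3$. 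Moise's theorem then upgrades this homeomorphism to a diffeomorphism, proving the proposition.

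The main obstacle, as indicated above, is purely the rescaling step: the naive dilation does not respect the singular hypersurface, and one must verify both that the weighted flow $\psi_s$ corresponds under $\varphi$ to honest dilation in $\C^2$ (so that $\varphi(S^3/A_n)$ is a cross-section) and that the Euclidean norm is monotone along $\psi_s$ (so that $L_{A_n}$ is one too). Everything else is the bookkeeping of a weighted-homogeneous quotient singularity.
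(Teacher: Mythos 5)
Your proposal is correct and follows essentially the same route as the paper: the monomial map $\varphi$ induced by $(u,v)\mapsto(uv,\tfrac{i}{\sqrt 2}u^{n+1},\tfrac{i}{\sqrt 2}v^{n+1})$ identifies $\C^2/A_n$ homeomorphically with $f_{A_n}^{-1}(0)$, a rescaling inside the hypersurface matches $\varphi(S^3/A_n)$ with $L_{A_n}$, and Moise's theorem upgrades the homeomorphism to a diffeomorphism. Your weighted flow $\psi_s$ is exactly the right way to make precise what the paper compresses into ``rescaling away from the origin'' (naive dilation indeed fails, since $f_{A_n}$ is only weighted homogeneous), and your injectivity argument fills in the ``exercise'' the paper leaves to the reader.
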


\begin{rem}
 In order to prove that two manifolds are contactomorphic, one must either construct an explicit diffeomorphism or make use of Gray's stability theorem.  Sadly, $\varphi$ is not a diffeomorphism onto its image when $u=0$ or $v=0$.   As the above diffeomorphism is only known to exist abstractly, we will need to appeal the latter method to prove that  $(L_{A_n},\xi_0)$ and $(L(n+1,n),\xi_{std})$ are contactomorphic.  As a result, this proof is rather involved.  \end{rem}

Our application of Gray's stability theorem uses the flow of a Liouville vector field to construct a 1-parameter family of contactomorphisms.   First we prove that $L(n+1,n)$ is a contact manifold whose contact structure descends from the quotient of $S^3$. 

Consider the standard symplectic form on $\C^2$ given by 
\begin{equation}
\begin{array}{lcl}
\omega_{\C^2}&=&d\lambda_{\C^2} \\
\lambda_{\C^2} &= &\dfrac{i}{2} \left(u d\bar u - \bar u du + v d\bar v - \bar v dv\right). \\
\end{array}
\end{equation}
The following proposition shows that $\lambda_0$ restricts to a contact form on $L(n+1,n)$. We define $\ker\lambda = \xi_{\text{std}}$ on $L(n+1,n)$. 

 \begin{prop}\label{calcliou}
The vector field
\[ 
Y_0 = \frac{1}{2} \left( u\frac{\p}{\p u} + \bar{u} \frac{\p }{\bar{u}} + v\frac{\p}{\p v} + \bar{v} \frac{\p }{\bar{v}} \right)
\]
is a Liouville vector field on $(\C^2/A_n,\omega_{\C^2})$ away from the origin and transverse to $L(n+1,n)$.
\end{prop}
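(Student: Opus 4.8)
The plan is to verify the two assertions---that $Y_0$ is Liouville and that it is transverse to $L(n+1,n)$---by working upstairs on $\C^2 \setminus \{\mathbf{0}\}$, where $Y_0$ is simply (half) the radial Euler vector field, and then pushing both properties down to the quotient by invoking $A_n$-invariance. The key structural observation is that the $A_n$-action is unitary, and hence preserves $\lambda_{\C^2}$, $\oo_{\C^2}$, and the radial field $Y_0$; since the action is free away from the origin, the quotient map $\C^2 \setminus\{\mathbf{0}\} \to (\C^2/A_n)\setminus\{\mathbf{0}\}$ is a covering, and all the relevant data descend.

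For the Liouville condition I would first record that $\oo_{\C^2} = d\lambda_{\C^2} = i(du\wedge d\bar u + dv \wedge d\bar v)$ and then contract with $Y_0$. A direct computation gives
\[
\iota_{Y_0}\oo_{\C^2} = \frac{i}{2}\left(u\,d\bar u - \bar u\, du + v\, d\bar v - \bar v\, dv\right) = \lambda_{\C^2}.
\]
Since $\oo_{\C^2}$ is exact, $d\oo_{\C^2} = 0$, so Cartan's formula yields
\[
\mathcal{L}_{Y_0}\oo_{\C^2} = \iota_{Y_0}\,d\oo_{\C^2} + d\,\iota_{Y_0}\oo_{\C^2} = d\lambda_{\C^2} = \oo_{\C^2},
\]
exactly as in Lemma \ref{helper}. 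This establishes the Liouville property on all of $\C^2$, and by $A_n$-invariance it persists on the smooth quotient $(\C^2/A_n)\setminus\{\mathbf{0}\}$.

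For transversality I would use the defining function $\rho = |u|^2 + |v|^2$ for $S^3 = \rho^{-1}(1)$. A short computation shows
\[
d\rho(Y_0) = \tfrac12\left(\bar u\, u + u\, \bar u + \bar v\, v + v\, \bar v\right) = |u|^2 + |v|^2 = \rho,
\]
so $d\rho(Y_0) = 1 \neq 0$ along $S^3$; geometrically, $Y_0$ points radially outward and so crosses every centered sphere transversally. Because $Y_0$ and $\rho$ are both $A_n$-invariant and the covering $S^3 \to L(n+1,n)$ is a local diffeomorphism, this transversality descends to $L(n+1,n) = S^3/A_n$.

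The only genuinely delicate point is the passage to the quotient: I must confirm that the $A_n$-action is free on $S^3$, so that $L(n+1,n)$ is a smooth manifold and the projection is a covering. This follows since a nontrivial element $(u,v)\mapsto(\zeta^k u, \zeta^{nk} v)$ with $\zeta = e^{2\pi i/(n+1)}$ fixes a point of $S^3$ only if $\zeta^k = \zeta^{nk} = 1$, forcing $k \equiv 0 \pmod{n+1}$ because $\gcd(n,n+1)=1$. Once freeness is in hand, invariance of $Y_0$, $\oo_{\C^2}$, and $\rho$ makes the descent of both the Liouville identity and the transversality immediate; the computations themselves are routine.
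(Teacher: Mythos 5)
Your proof is correct and takes essentially the same route as the paper: Cartan's formula ($\mathcal{L}_{Y_0}\omega_{\C^2} = d\,\iota_{Y_0}\omega_{\C^2} = \omega_{\C^2}$) for the Liouville property, and the radial nature of $Y_0$ for transversality --- the paper phrases the latter as $Y_0 = \tfrac{1}{4}\nabla g$ with $g = |u|^2 + |v|^2$, while you pair $Y_0$ against $d\rho$ to get $d\rho(Y_0) = \rho \neq 0$ on $S^3$, which is the same computation. The only difference is presentational: you work upstairs on $\C^2\setminus\{\mathbf{0}\}$ and descend everything through the free unitary $A_n$-action (including an explicit freeness check), whereas the paper performs the identical computations directly on the quotient after asserting its smoothness away from the origin.
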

\begin{proof}
We have that $\C^2/A_n$ is a smooth manifold away from the origin because $0$ is the only fixed point by the action of $A_n$. Write 
\[ 
S^3/A_n = \{ (u, v) \in \C^2/A_n\ \big | \ |u|^2+|v|^2 = 1\}.
\]
Then $L(n+1,n) =S^3/A_n$ is a regular level set of $g(u,v) = |u|^2+|v|^2$ Choose a Riemannian metric on $\C^2/A_n$ and note that
\[ 
Y_0 = \frac{1}{4} \nabla g.
\] 
Thus $Y_0$ is transverse to $L(n+1,n)$. Since
\[
 \mathcal{L}_{Y_0} \omega_{\C^2}  = d(i_{Y_0}d\lambda_{\C^2}) = \omega_{\C^2},
 \]
we may conclude that $Y_0$ is indeed a Liouville vector field on $(\C^2/A_n, \omega_{\C^2})$ away from the origin. Thus by Proposition \ref{contacttype}, $L(n+1,n)$ is a hypersurface of contact type in $\C^2/A_n$.
\end{proof}


\subsection{ The proof that $(L_{A_n},\xi_0)$ and $(L(n+1,n),\xi_{std})$ are contactomorphic} \hfill \\

First we set up $L_{A_n}$ and $\varphi(L(n+1,n))$ as hypersurfaces of contact type in $\{f^{-1}_{A_n}(0) \} \setminus \{ \mathbf{0} \}$.  Define $\rho : \C^3 \to \R$ by
\[ \rho(z) = \frac{|z|^2 - 1}{4} = \frac{z_0\bar{z}_0 + \cdots + z_2\bar{z}_2 - 1}{4}.
\] 
The standard symplectic structure on $\C^3$ is given by.
\[ \oo_{\C^3} = \frac{i}{2}( dz_0\wedge d\bar{z}_0 + \cdots + dz_2 \wedge d\bar{z}_2).\]
Moreover, 
\begin{equation}\label{liouY}
 Y = \nabla \rho =  \frac{1}{2} \sum_{j = 0}^2 z_j \frac{\p}{\p z_j} + \bar{z}_j \frac{\p}{\p \bar{z}_j}
 \end{equation} 
 is a Liouville vector field for $(\C^3,\oo_{\C^3}).$  We define
 \[
\lambda_{\C^3}= \iota_Y \omega_{\C^3}.
  \]
A standard calculation analogous to the proof of Proposition \ref{calcliou} shows that $Y$ is a Liouville vector field on $\left( \{f^{-1}_{A_n}(0) \} \setminus \{ \mathbf{0} \}, \omega_{\C^3} \right)$

 \begin{rem}
Both $\varphi(L(n+1,n))$ and $L_{A_n}$ are  hypersurfaces of contact type in 
$\left( \{f^{-1}_{A_n}(0) \} \setminus \{ \mathbf{0} \}, \omega_{\C^3} \right)$.  
Note that $\varphi(L(n+1,n))$ is in fact transverse to the Liouville vector field $Y$ because
\[  
  \begin{array}{ccl}
\varphi(L(n+1,n)) &=& \varphi \left( \left \{|u|^{2} + |v|^{2} = 1 \right \}/ A_n\right) \\
&=& \varphi ( \{|u|^{4} + 2|u|^2|v|^2+ |v|^{4} = 1 \}/ A_n) \\
 &=&\left \{ 2|z_0|^2+4^{1/(n+1)}|z_1|^{4/(n+1)} +  4^{1/(n+1)} |z_2|^{4/(n+1)} = 1 \right \} \cap f_{A_n}^{-1}(0)  \\
  \end{array} \]
\end{rem}

We will want $\varphi(L(n+1,n))$ and $L_{A_n}$ to be disjoint in $\{f^{-1}_{A_n}(0) \}. $  This is easily accomplished by rescaling $r$ in the definition of the link.  

\begin{definition}
Define
\[
L_{A_n}^r = f^{-1}_{A_n}(0) \cap S^5_r,
\]
with the assumption that $r$ has been chosen so that $\varphi(L(n+1,n))$ and $L_{A_n}^r$ are  disjoint in $\{f^{-1}_{A_n}(0) \}$ and so that the flow of the Liouville vector field $Y$ ``hits" $\varphi(L(n+1,n))$ before $L_{A_n}^r$. 
\end{definition}

The first result is the following lemma, which provides a 1-parameter family of diffeomorphic manifolds starting on $\varphi(L(n+1,n))$ and ending on $L_{A_n}^r$.  First we set up some notation.  Let
\[
\psi_t:\R \times X \to X
\]
 be the flow of $Y$ and  $\psi_t(z) =  \gamma_z(t)$ the unique integral curve passing through $z \in \varphi(L(n+1,n))$ at time $t = 0$.  For any integral curve $\gamma$ of $Y$ we consider the following initial value problem:
\begin{equation}
\label{ivp}
\begin{array}{ccl}
\gamma'(t)&=&Y(\gamma(t))\\
\gamma(0)&=&z \in  \varphi(L(n+1,n)) \\
\end{array}
\end{equation}
By means of the implicit function theorem and the properties of the Liouville vector field $Y$ we can prove the following claim.

\begin{lem}\label{oneparameter}
For every $\gamma_z$, there exists a $\tau(z) \in \R_{> 0}$ such that $\gamma_z(\tau(z)) \in L_{A_n}^r$.  The choice of $\tau(z) $ varies smoothly for each $z \in \varphi(L(n+1,n))$.
\end{lem}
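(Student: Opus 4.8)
The plan is to reduce the geometric assertion to solving a single scalar equation $G(t,z)=0$ and to extract the hitting time $\tau(z)$, together with its smooth dependence on $z$, from the implicit function theorem. Write $W = f_{A_n}^{-1}(0)\setminus\{\mathbf 0\}$ and recall from the remark above that $Y$ is a Liouville vector field on $(W,\omega_{\C^3})$; in particular $Y$ is tangent to $W$, so the flow $\psi_t$ preserves $W$ and every integral curve $\gamma_z$ issuing from $z\in\varphi(L(n+1,n))\subset W$ remains in $W$ for as long as it is defined. Since $L_{A_n}^r = W\cap S^5_r$, it therefore suffices to locate the first positive time at which $\gamma_z$ meets the sphere $S^5_r$.

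First I would introduce the defining function $g(z)=|z|^2-r^2$, so that $L_{A_n}^r=\{g=0\}\cap W$, and set $G(t,z)=g(\gamma_z(t))$ for $z\in\varphi(L(n+1,n))$. The choice of $r$ in the definition of $L_{A_n}^r$—that the flow of $Y$ hit $\varphi(L(n+1,n))$ before $L_{A_n}^r$—means precisely that $\varphi(L(n+1,n))\subset\{g<0\}$, so $G(0,z)=g(z)<0$. Differentiating along the flow gives $\partial_t G(t,z)=dg_{\gamma_z(t)}(Y)$, and the key computation is that this quantity is strictly positive: since $Y=\nabla\rho$ is the outward radial field one finds $dg(Y)=|z|^2$, whence $\partial_t G(t,z)=|\gamma_z(t)|^2>0$ away from the origin, with $|\gamma_z(t)|^2\to\infty$ as $t\to\infty$. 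Thus $t\mapsto G(t,z)$ is strictly increasing from a negative value to $+\infty$, so the intermediate value theorem yields a $\tau(z)>0$ with $G(\tau(z),z)=0$, and strict monotonicity makes it unique; at that time $\gamma_z(\tau(z))\in W\cap S^5_r=L_{A_n}^r$, which proves existence.

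Finally, I would obtain smoothness of $z\mapsto\tau(z)$ from the implicit function theorem applied to $G$ at the point $(\tau(z),z)$: the map $G$ is smooth, $G(\tau(z),z)=0$, and $\partial_t G(\tau(z),z)=|\gamma_z(\tau(z))|^2=r^2\neq 0$, so near each $z$ the equation $G(t,z)=0$ admits a unique smooth solution $t=\tau(z)$; by uniqueness these local solutions patch to a globally smooth function on $\varphi(L(n+1,n))$. I expect the only genuine subtlety to be the two points that keep the endpoint inside $L_{A_n}^r$ rather than merely on $S^5_r$, namely that $\gamma_z$ never leaves $W$ and that the flow exists up to time $\tau(z)$; the first is exactly the tangency built into the cited Liouville property, while the second holds because $\gamma_z|_{[0,\tau(z)]}$ is confined to the compact region of $W$ trapped between $\varphi(L(n+1,n))$ and $L_{A_n}^r$, on which $Y$ is complete.
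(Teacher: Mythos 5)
Your proposal follows the same core strategy as the paper's proof: turn the lemma into a scalar equation along the flow and extract both $\tau(z)$ and its smooth dependence from the implicit function theorem, the key input in each case being positivity of the radial derivative along $Y$. But your execution is materially more complete. The paper's proof only verifies the implicit-function-theorem hypothesis: it argues that $\partial_t(\rho\circ\gamma)\neq 0$ at zeros via the dichotomy ``either $\grad\rho$ fails to be transverse or $\dot\gamma=0$,'' which is not airtight as written (two nonzero vectors can be orthogonal; the honest reason, which you give, is that $\dot\gamma=Y$, so the derivative is $dg(Y)$ evaluated along the curve, i.e.\ $|\gamma_z(t)|^2>0$), and it never actually proves that an integral curve starting on $\varphi(L(n+1,n))$ reaches $L_{A_n}^r$ at some positive time. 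Your monotonicity-plus-intermediate-value argument, with uniqueness from strict monotonicity and the completeness remark justifying flowing up to time $\tau(z)$, supplies exactly the existence statement that the paper leaves implicit.

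One caveat, which concerns the paper's setup rather than your logic. The point you isolate as the only genuine subtlety --- that $Y$ is tangent to $W=f_{A_n}^{-1}(0)\setminus\{\mathbf 0\}$, so that the endpoint lands in $L_{A_n}^r$ rather than merely on $S^5_r$ --- is indeed where the argument is fragile, and the remark you cite does not deliver it for the vector field as written: for $Y=\nabla\rho$ from (\ref{liouY}) one computes $Y(f_{A_n})=\tfrac12\bigl((n+1)z_0^{n+1}+4z_1z_2\bigr)=\tfrac{n-1}{2}\,z_0^{n+1}$ on $f_{A_n}^{-1}(0)$, which is nonzero for $n\geq 2$ wherever $z_0\neq 0$; so the ambient radial field is not tangent to $W$, and its flow (radial rescaling) exits $W$ instantly. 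The cure is to take $Y$ to be the gradient of $\rho|_W$ in the induced K\"ahler metric, equivalently the tangential projection of $\nabla\rho$ onto $TW$: this is tangent to $W$ by construction, is a Liouville vector field for $\omega_{\C^3}|_W$ because $\rho$ is strictly plurisubharmonic and $W$ is a complex hypersurface, and satisfies $dg(Y)=4\,|\nabla^W(\rho|_W)|^2>0$ away from the origin, with a positive lower bound on the compact region between the two hypersurfaces. With that reading, your argument runs verbatim (the intermediate value step now uses the lower bound on $dg(Y)$ rather than $|\gamma_z(t)|^2\to\infty$), and so does the paper's. Note that the paper's own proof never addresses membership of $\gamma_z(\tau(z))$ in $f_{A_n}^{-1}(0)$ at all --- it only handles the sphere condition --- so your making this dependence explicit is an improvement, not a defect.
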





\begin{proof}
In order to apply the implicit function theorem, we must show for all $(t,z)$ with $\rho \circ \gamma =0$ that
\[
\frac{\p (\rho \circ \gamma)}{\p t} \neq 0.
\]
Note that $\rho \circ \gamma$ is smooth.  By the chain rule,
\[
\left. \frac{\p (\rho \circ \gamma)}{\p t}\right|_{(s,p)} = \mbox{grad }\rho |_{\gamma(s,p)} \cdot \dot{\gamma}|_{(s,p)},
 \]
where $\dot{\gamma}|_{(s,p)} = \frac{\p \gamma}{\p t}|_{(s,p)}$. 

If $\mbox{grad } \rho \arrowvert_{\gamma(s,p)} \cdot \dot{ \gamma}|_{(s,p)} = 0$, then either $\mbox{grad } \rho$ is not transverse along $\{ (\rho \circ \gamma) \ (s,p)=0 \}$ or  $ \dot{ \gamma}|_{(s,p)} = 0$, since $\mbox{grad } \rho \neq 0$.  By construction grad $\rho = \nabla \rho$ is a Liouville vector field transverse to $L_{A_n}^r$ .  Furthermore, the conformal symplectic nature of a Liouville vector field implies that for any integral curve $\gamma$ satisfying the initial value problem given by equation (\ref{ivp}), $\dot{\gamma}|_{(s,p)} \neq 0$.  Thus we see that the conditions for the implicit function theorem are satisfied and our claim is proven. 


\end{proof}

\begin{rem}\label{helperrem}
The time $\tau(z)$ can be normalized to 1 for each $z$, yielding a 1-parameter family of diffeomorphic contact manifolds $(M_t,\zeta_t)$ for $0 \le t \le 1$ given by
 \[ M_t = \psi_t( \varphi(L(n+1,n))), \quad \zeta_t = TM_t \cap J_{\C^3} (TM_t)\] where 
\[ M_0 = \psi_0(\varphi(L(n+1,n))) = \varphi(L(n+1,n)), \quad M_1 = \psi_1 (\varphi(L(n+1,n))) = L_{A_n}.\] 
\end{rem}

Moreover, we can relate the standard contact structure on $L(n+1,n)$ under the image of $\varphi$.  To avoid excessive parentheses, we use $S^3/A_n$ in place of $L(n+1,n)$ in this lemma.
\begin{lem}\label{technicalphi}
On $\varphi(S^3/A_n), \ \ \varphi_*\xi_{std}= T(\varphi(S^3/A_n)) \cap J_{\C^3} (T(\varphi(S^3/A_n))) .$
\end{lem}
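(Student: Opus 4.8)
The plan is to exploit the fact that $\varphi$ is the descent of the holomorphic polynomial map $\tilde\varphi$ of (\ref{varphieq}); a holomorphic map is complex linear on each tangent space, and complex linearity is precisely what carries complex tangencies to complex tangencies. It is cleanest to argue upstairs on $S^3$ and then descend. Since $\tilde\varphi$ is $A_n$-invariant we have $\tilde\varphi|_{S^3}=\varphi\circ\pi$, where $\pi\colon S^3\to S^3/A_n$ is the free finite quotient projection, so $d\pi$ is a fiberwise isomorphism. Moreover the standard contact structure is the field of complex tangencies: with $g(z)=|z|^2$ one has $\lambda_{std}=-\tfrac12\,dg\circ J_{\C^2}$ on $S^3$ (as in Example \ref{3-sphere}), whence $\ker\lambda_{std}=TS^3\cap J_{\C^2}(TS^3)$, and this descends to $\xi_{std}=T(S^3/A_n)\cap J_{\C^2}(T(S^3/A_n))$.

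First I would record that $\tilde\varphi$ is holomorphic, so $d\tilde\varphi\circ J_{\C^2}=J_{\C^3}\circ d\tilde\varphi$ on all of $\C^2$; passing to the quotient gives $d\varphi\circ J_{\C^2}=J_{\C^3}\circ d\varphi$. Because $\xi_{std}$ is a $J_{\C^2}$-invariant subspace, its image $\varphi_*\xi_{std}$ is a $J_{\C^3}$-invariant subspace of $T(\varphi(S^3/A_n))$, that is,
\[
\varphi_*\xi_{std}\ \subseteq\ T(\varphi(S^3/A_n))\cap J_{\C^3}(T(\varphi(S^3/A_n))).
\]
To promote this inclusion to equality I would compare dimensions. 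The right-hand side is the field of complex tangencies of the real $3$-dimensional submanifold $\varphi(S^3/A_n)$ sitting inside the complex surface $f_{A_n}^{-1}(0)\setminus\{\mathbf{0}\}$; a real hypersurface in a complex surface has complex tangencies of constant complex dimension $1$. On the other hand $\xi_{std}$ is a complex line, and provided $\varphi$ is an immersion, $d\varphi$ preserves its real (hence complex) dimension. Two complex lines, one contained in the other, must coincide, which is the claim.

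The one point that requires care—and the reason the statement is singled out as technical—is the behaviour along the loci $u=0$ and $v=0$ flagged in the preceding remark, where the dimension count would collapse if $d\varphi$ degenerated. The hard part will therefore be verifying that $\varphi$ is an immersion there. I would settle this by a direct computation of the complex Jacobian of $\tilde\varphi=(uv,\tfrac{i}{\sqrt2}u^{n+1},\tfrac{i}{\sqrt2}v^{n+1})$: its three $2\times2$ minors are proportional to $u^{n+1}$, $v^{n+1}$, and $u^nv^n$, and these cannot vanish simultaneously away from the origin. Hence $\tilde\varphi$ has complex rank $2$ everywhere on $\C^2\setminus\{\mathbf{0}\}$, in particular along $\{u=0\}$ and $\{v=0\}$, so $d\tilde\varphi$, and therefore $d\varphi$, is injective on $S^3/A_n$. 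This validates the dimension count uniformly and completes the argument.
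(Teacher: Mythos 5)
Your proof is correct, and it shares its skeleton with the paper's own argument: lift everything to $S^3$ via $\tilde\varphi=\varphi\circ\pi$, use holomorphicity of $\tilde\varphi$ so that $d\tilde\varphi$ intertwines $J_{\C^2}$ and $J_{\C^3}$, and descend through the free quotient. The difference lies in how equality is closed. The paper argues by a chain of equalities whose key step is $\tilde\varphi_*(TS^3\cap J_{\C^2}TS^3)=\tilde\varphi_*(TS^3)\cap\tilde\varphi_*(J_{\C^2}TS^3)$; distributing a pushforward over an intersection is only legitimate when $d\tilde\varphi$ is injective, and the paper never verifies this (indeed, its earlier remark that $\varphi$ fails to be a diffeomorphism onto its image along $u=0$ and $v=0$ could make a reader doubt it). You instead prove the inclusion $\varphi_*\xi_{std}\subseteq T(\varphi(S^3/A_n))\cap J_{\C^3}(T(\varphi(S^3/A_n)))$ by $J$-equivariance and then force equality with a dimension count: the right-hand side is the complex-tangency field of a real hypersurface in the complex surface $f_{A_n}^{-1}(0)\setminus\{\mathbf{0}\}$, hence has constant complex dimension one, while the left-hand side is a complex line provided $d\varphi$ is injective. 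This makes the rank condition explicit, and your minor computation---the three $2\times 2$ minors of $d\tilde\varphi$ are proportional to $u^{n+1}$, $v^{n+1}$, and $u^nv^n$, which vanish simultaneously only at the origin---settles it, including along the delicate loci $u=0$ and $v=0$. So the two routes buy different things: the paper's chain is shorter but silently relies on exactly the injectivity you check, whereas yours is self-contained at the one point where the argument could conceivably fail; in fact your rank computation is precisely what justifies the paper's pushforward-intersection step. (Both proofs, like the statement of the lemma itself, presuppose that $\varphi(S^3/A_n)$ is an embedded submanifold so that its tangent bundle makes sense; this follows from the global injectivity of $\varphi$ recorded in the construction of Section 4.1, your immersion computation, and compactness.)
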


\begin{proof}
Since $A_n \subset SL(2,\C)$ 
we have
\[
\tilde \varphi (J_{\C^2}TS^3) = J_{\C^3}(T\tilde \varphi(S^3)).
\]
We examine $\varphi_*\big(\xi_{\text{std}}\big)$:
\begin{align*}
  \varphi_*(T(S^3/A_n) \cap J_{\C^2} T(S^3/A_n)) &=\tilde \varphi_*(TS^3 \cap J_{\C^2}(TS^3)) \\
&=\tilde \varphi_*(TS^3) \cap \tilde \varphi_*(J_{\C^2}(TS^3)) \\
&=\tilde \varphi_*(TS^3) \cap J_{\C^3}\tilde \varphi_*(TS^3) \\
&= T\tilde\varphi(S^3) \cap J_{\C^3}(T\tilde\varphi(S^3)) \\
&= T(\varphi(S^3/A_n)) \cap J_{\C^3}(T\varphi(S^3/A_n)).
\end{align*}
\end{proof}

 Lemmas \ref{oneparameter} and \ref{technicalphi} in conjunction with Remark \ref{helperrem} and Lemma \ref{graycor} yields the following proposition.

\begin{prop}\label{propatlast}
The image of the lens space $(\varphi(L(n+1,n)),  \varphi_*\xi_{std})$ is contactomorphic to $(L_{A_n}, \xi_{0})$.
\end{prop}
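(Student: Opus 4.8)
The plan is to apply Gray's stability theorem in the packaged form of Lemma \ref{graycor}, using the flow of the Liouville vector field $Y$ to produce an explicit contactomorphism between the two hypersurfaces of contact type sitting inside $\left( \{f^{-1}_{A_n}(0)\} \setminus \{\mathbf{0}\}, \omega_{\C^3}\right)$. First I would invoke Lemma \ref{oneparameter} together with Remark \ref{helperrem}: for each $z \in \varphi(L(n+1,n))$ the integral curve $\gamma_z$ of $Y$ reaches $L_{A_n}^r$ at a smoothly varying time $\tau(z) > 0$. I would then build a single smooth function $h$ on a neighborhood of the region swept out between the two hypersurfaces whose value along each such integral curve is the constant $\tau(z)$; smoothness of $h$ follows from the smoothness of $\tau$ and of the flow, and one extends $h$ off this region by a cutoff, which is harmless since Lemma \ref{graycor} only concerns behavior near the hypersurfaces. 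Because $h$ is constant along each orbit of $Y$, the flow of $hY$ is merely a reparametrization of the flow of $Y$: through $z$ it is the curve $t \mapsto \gamma_z(\tau(z)\,t)$, so its time-$1$ map sends $z \mapsto \gamma_z(\tau(z)) \in L_{A_n}^r$ and is therefore a diffeomorphism from $\varphi(L(n+1,n))$ onto $L_{A_n}$.

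Next, Lemma \ref{graycor} immediately upgrades this diffeomorphism to a contactomorphism from $\left(\varphi(L(n+1,n)), \ker \iota_Y \omega_{\C^3}|_{T\varphi(L(n+1,n))}\right)$ to $\left(L_{A_n}, \ker \iota_Y \omega_{\C^3}|_{TL_{A_n}}\right)$, so it remains only to identify the two contact structures with the ones in the statement. Since $\omega_{\C^3}$ is compatible with $J_{\C^3}$ and $Y = \nabla \rho$, the contact form $\iota_Y \omega_{\C^3} = \lambda_{\C^3}$ restricted to any hypersurface $M$ of contact type has kernel equal to the field of complex tangencies $TM \cap J_{\C^3}(TM)$; this is the same computation that identifies $\alpha_0$ with $(-d\rho \circ J_0)|_\Sigma$ in the Brieskorn discussion. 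Hence at the $L_{A_n}$ end the contact structure is exactly $\xi_0$, while at the $\varphi(L(n+1,n))$ end it is $T(\varphi(L(n+1,n))) \cap J_{\C^3}(T\varphi(L(n+1,n)))$, which Lemma \ref{technicalphi} identifies with $\varphi_*\xi_{std}$. Composing these identifications with the contactomorphism above gives the asserted statement.

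The step I expect to be the main obstacle — or at least the one requiring the most care — is the construction of the function $h$ and the verification that the time-$1$ map of $hY$ genuinely carries $\varphi(L(n+1,n))$ onto $L_{A_n}$. The subtlety is that $h$ must be a globally defined smooth function, not merely a function on the starting hypersurface, and the reparametrization identity must hold along every orbit simultaneously. The smoothness of $\tau(z)$ furnished by the implicit function theorem argument in Lemma \ref{oneparameter} is precisely what makes this reparametrization smooth, and the requirement in the definition of $L_{A_n}^r$ that the flow of $Y$ \textbf{hits} $\varphi(L(n+1,n))$ before $L_{A_n}^r$ guarantees $\tau(z) > 0$ consistently, so that the time-$1$ map is an honest global diffeomorphism rather than merely a local one.
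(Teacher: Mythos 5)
Your proposal is correct and follows essentially the same route as the paper, whose own proof consists precisely of combining Lemma \ref{oneparameter}, Remark \ref{helperrem}, Lemma \ref{graycor}, and Lemma \ref{technicalphi}. The details you supply --- the explicit construction of the orbit-constant function $h$ so that the time-$1$ map of $hY$ realizes the flow map $z \mapsto \gamma_z(\tau(z))$, and the identification of $\ker \iota_Y\omega_{\C^3}$ on each hypersurface with the field of complex tangencies (hence with $\xi_0$ and, via Lemma \ref{technicalphi}, with $\varphi_*\xi_{std}$) --- are exactly the steps the paper leaves implicit in Remark \ref{helperrem} and its one-sentence proof.
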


It remains to show that $(\varphi(L(n+1,n)), \varphi_*\xi_{std})$ is contactomorphic to $(L(n+1,n),\xi_{std})$.  To accomplish this, we use Moser's Lemma to prove the following lemma.


\begin{lem}\label{moserlem} The manifolds $(\C^2 \setminus \{ \mathbf{0} \},d\lambda_{\C^2})$ and $(\C^2 \setminus \{\mathbf{0} \}, d\tilde\varphi^*\lambda_{\C^3})$ are contactomorphic.

\end{lem}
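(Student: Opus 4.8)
The plan is to prove this by Moser's method, interpolating between the two Liouville forms $\lambda_0 := \lambda_{\C^2}$ and $\lambda_1 := \tilde\varphi^*\lambda_{\C^3}$ and integrating the resulting Moser vector field to a diffeomorphism of $\C^2\setminus\{\mathbf 0\}$ carrying $d\lambda_1$ to $d\lambda_0$. The first point to settle is that $d\lambda_1$ really is a symplectic form. Computing the Jacobian of $\tilde\varphi(u,v)=(uv,\tfrac{i}{\sqrt2}u^{n+1},\tfrac{i}{\sqrt2}v^{n+1})$ shows that it has full rank $2$ at every point of $\C^2\setminus\{\mathbf 0\}$ (the map degenerates only at the origin), so $\tilde\varphi$ is a holomorphic immersion there. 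Hence for $w\neq 0$,
\[
d\lambda_1(w,J_{\C^2}w)=\omega_{\C^3}\bigl(\tilde\varphi_*w,\,J_{\C^3}\tilde\varphi_*w\bigr)=\|\tilde\varphi_*w\|^2>0,
\]
so $d\lambda_1$ is a positive $(1,1)$-form for $J_{\C^2}$; the same holds trivially for the standard Kähler form $d\lambda_0$. Consequently every convex combination $\omega_s:=(1-s)\,d\lambda_0+s\,d\lambda_1$ is again a taming $(1,1)$-form and therefore nondegenerate for all $s\in[0,1]$, which makes the nondegeneracy hypothesis in Moser's argument automatic.

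Since $\C^2\setminus\{\mathbf 0\}$ is homotopy equivalent to $S^3$ we have $H^2(\C^2\setminus\{\mathbf 0\};\R)=0$, and in any case both forms are exact with the explicit primitives $\lambda_0,\lambda_1$. Setting $\mu:=\lambda_1-\lambda_0$, I would define the Moser vector field $X_s$ by $\iota_{X_s}\omega_s=-\mu$ and let $\Phi_s$ be its flow, so that formally $\Phi_1^*\,d\lambda_1=d\lambda_0$. Because $A_n\subset U(2)$ preserves $\lambda_0$ and $\tilde\varphi$ is $A_n$-invariant, the objects $\mu$, $\omega_s$, $X_s$ and $\Phi_s$ are all $A_n$-equivariant, which is precisely what will allow the construction to descend to the lens-space quotient in the following step.

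The main obstacle is that $\C^2\setminus\{\mathbf 0\}$ is noncompact, so—unlike in Theorem \ref{moser}—one must show that the flow of $X_s$ exists for all $s\in[0,1]$ and never reaches the origin nor escapes to infinity. I would control this using the conical structure of the problem: $\lambda_0$ is homogeneous of degree $2$ under the radial scaling $(u,v)\mapsto(\mu u,\mu v)$, while $\tilde\varphi$ intertwines this scaling with the weighted scaling of weights $(2,n+1,n+1)$ on $\C^3$, so that $\lambda_1$ and all the $\omega_s$ have explicitly controlled weighted-homogeneous behavior in the radial direction. Estimating the vector field $X_s$ determined by $\iota_{X_s}\omega_s=-\mu$ against this scaling bounds its radial component away from the origin and from infinity, yielding completeness of the flow on $[0,1]$ and hence a genuine diffeomorphism $\psi:=\Phi_1$ of $\C^2\setminus\{\mathbf 0\}$ with $\psi^*\,d\lambda_1=d\lambda_0$.

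Finally, to upgrade this symplectomorphism to the asserted contactomorphism I would arrange the isotopy to respect the cone structure, so that $\psi$ intertwines the radial Liouville field of $d\lambda_0$ with a Liouville field for $d\lambda_1$ and thereby matches the induced contact forms on the corresponding contact-type hypersurfaces; this homogeneity bookkeeping, rather than the formal Moser step, is where the real work lies. Together with Lemma \ref{technicalphi} and Proposition \ref{propatlast}, the $A_n$-equivariant version of $\psi$ then descends to identify $(\varphi(L(n+1,n)),\varphi_*\xi_{std})$ with $(L(n+1,n),\xi_{std})$, completing the chain of contactomorphisms.
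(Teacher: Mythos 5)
Your proposal follows the same core strategy as the paper's proof: interpolate linearly, $\omega_s=(1-s)\,d\lambda_{\C^2}+s\,d\tilde\varphi^*\lambda_{\C^3}$, check that each $\omega_s$ is an exact symplectic form, and apply Moser's argument. Two differences in execution are worth recording. First, where the paper simply asserts that ``a simple calculation reveals that $\omega_t\wedge\omega_t$ is a volume form,'' you derive nondegeneracy from the fact that $\tilde\varphi$ is a holomorphic immersion on $\C^2\setminus\{\mathbf 0\}$, so that $d\tilde\varphi^*\lambda_{\C^3}$ and every convex combination with $d\lambda_{\C^2}$ tames $J_{\C^2}$; this is cleaner and is the honest justification of the paper's unproved claim. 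Second, you correctly flag what the paper passes over in silence: Theorem \ref{moser} as cited is for \emph{closed} manifolds, and on the noncompact $\C^2\setminus\{\mathbf 0\}$ one must prove the Moser flow exists for all $s\in[0,1]$.

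However, your proposed resolution of that last point does not work as stated. Homogeneity does not ``bound'' the radial component of $X_s$: along the $u$-axis one computes that the radial component grows like $|u|^{2n+1}$ near infinity when $s$ is near $0$, and like $|u|^{1-2n}$ near the origin when $s$ is near $1$, so the field is genuinely unbounded at both ends. What saves forward trajectories is the \emph{sign} of the radial component (near infinity $\tilde\varphi^*\lambda_{\C^3}$ dominates $\lambda_{\C^2}$, forcing the Moser field inward; near the origin the opposite holds, forcing it outward), and surjectivity of the time-one map requires a separate analysis of backward trajectories; alternatively, one can sidestep all of this by running the argument on a compact shell around $S^3/A_n$, which is all that Corollary \ref{atlast} needs. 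Your closing observations --- that the construction should be kept $A_n$-equivariant, and that an exact symplectomorphism does not automatically match the primitives and hence does not automatically induce the contactomorphism claimed in Corollary \ref{atlast} --- are both well placed: the paper elides these points as well, so on balance your write-up is more candid about where the real work lies, but it does not complete that work either.
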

\begin{proof}
Consider the family of 2-forms 
\[ \oo_t = (1 - t)\oo_{\C^2} + t\tilde \varphi^*\oo_{\C^3}\]
for $0 \le t \le 1$. Then $\oo_t$ is exact because $Y_0$ and $Y$ are  Liouville vector fields for $\C^2\setminus \mathbf{0}$ equipped with the symplectic forms $\oo_{\C^2}$ and $\oo_{\C^3}$ respectively,  thus $d\lambda_t = \oo_t$ for 
\[\lambda_t = (1 - t)\lambda_{\C^2} + t\tilde\varphi^*(\lambda_{\C^3}).\]
for $0 \le t \le 1$.   We claim for each $t \in [0,1]$, $\lambda_t$ is a family of contact forms.

 We compute
\begin{align*}
\frac{2}{i} \tilde\varphi^*d\lambda_{\C^3} & = d(uv)\wedge d(\overline{uv} ) + d(u^{n+1})\wedge d(\bar u^{n+1}) + d(v^{n+1}) \wedge d(\bar v^{n+1}) \\
& = ((n+1)^2 |u|^{2n} + |v|^2)du\wedge d\bar u + 2\Re (u\bar v dv \wedge d\bar u) + ((n+1)^2|v|^{2n} + |u|^2) dv\wedge d\bar v.
\end{align*}
Since  $\oo_t$ is exact for each $t\in[0,1]$, $d(\oo_t)=0$ for each $t\in[0,1]$.  Moreover, a simple calculation reveals that $\oo_t \wedge \oo_t$ is a volume form on $\C^2$ for each $t\in[0,1]$.   Thus we may conclude that, $\oo_t$ is a symplectic form  for each $t\in[0,1]$.  Applying Moser's argument, Theorem \ref{moser}, yields the desired result.   
\end{proof}

This yields the desired corollary.

\begin{cor}\label{atlast}
The manifolds $(L(n+1,n), \ker \lambda_{\C^2})$ and $(L(n+1,n), \ker \varphi^*\lambda_{\C^3})$ are contactomorphic.
\end{cor}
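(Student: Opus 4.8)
The plan is to deduce the corollary from Lemma \ref{moserlem} by passing to the $A_n$-quotient and invoking Gray's stability theorem on the closed $3$-manifold $L(n+1,n)$, rather than by restricting the Moser symplectomorphism directly. The latter route is tempting but problematic: the Moser map only identifies the symplectic forms $d\lambda_{\C^2}$ and $d\tilde\varphi^*\lambda_{\C^3}$, and one computes that it pulls back $\tilde\varphi^*\lambda_{\C^3}$ to $\lambda_{\C^2}+dg$ for some function $g$, so it need not carry the contact structure induced on a fixed hypersurface to the desired one. Instead, I would first record that the linear interpolation
\[
\lambda_t = (1-t)\lambda_{\C^2} + t\,\tilde\varphi^*\lambda_{\C^3}, \qquad t\in[0,1],
\]
is built from $A_n$-invariant data: the $A_n$-action on $\C^2$ is unitary and hence preserves $\lambda_{\C^2}$, while $\tilde\varphi$ is $A_n$-invariant so that $\tilde\varphi^*\lambda_{\C^3}$ is $A_n$-invariant as well. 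Consequently each $\lambda_t$ descends to a $1$-form on $\C^2/A_n$ and restricts to a $1$-form on $L(n+1,n)=S^3/A_n$, with $\lambda_0|_{L}=\lambda_{\C^2}$ and $\lambda_1|_{L}=\varphi^*\lambda_{\C^3}$.

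The central step is to show that each $\lambda_t|_{L(n+1,n)}$ is a contact form. The proof of Lemma \ref{moserlem} already establishes that $\omega_t:=d\lambda_t$ is symplectic on $\C^2\setminus\{\mathbf{0}\}$, i.e.\ $\omega_t\wedge\omega_t$ is a volume form for every $t\in[0,1]$. Letting $Y_t$ denote the Liouville vector field determined by $\iota_{Y_t}\omega_t=\lambda_t$, we have
\[
\lambda_t\wedge d\lambda_t = (\iota_{Y_t}\omega_t)\wedge\omega_t = \tfrac12\,\iota_{Y_t}(\omega_t\wedge\omega_t),
\]
so $\lambda_t|_{L}$ is contact precisely when $Y_t$ is transverse to $S^3$. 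At $t=0$ this field is the radial field $Y_0$, which is manifestly transverse. I would verify transversality for all $t$ using the explicit form
\[
\tilde\varphi^*\lambda_{\C^3}=\tfrac{i}{2}\big(A\,(u\,d\bar u-\bar u\,du)+B\,(v\,d\bar v-\bar v\,dv)\big),\qquad A=|v|^2+\tfrac{n+1}{2}|u|^{2n},\ \ B=|u|^2+\tfrac{n+1}{2}|v|^{2n},
\]
noting that $A,B>0$ on $S^3$, so that the coefficient functions of $\lambda_t$ stay positive, and then checking that $dg(Y_t)\neq0$ on $S^3$ for $g=|u|^2+|v|^2$. This is the step I expect to be the main obstacle: it is the one genuinely new computation beyond the symplectic nondegeneracy supplied by Lemma \ref{moserlem}, and some care is required because $Y_t$ depends nonlinearly on $t$.

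With contactness of the family in hand the conclusion is immediate. The forms $\lambda_t|_{L}$ define a smooth family of contact structures $\ker(\lambda_t|_{L})$ on the closed manifold $L(n+1,n)$, so Gray's stability theorem produces an isotopy $(\psi_t)_{t\in[0,1]}$ of $L(n+1,n)$ with $(\psi_t)_*\ker(\lambda_0|_L)=\ker(\lambda_t|_L)$. Taking $t=1$, the diffeomorphism $\psi_1$ is a contactomorphism from $(L(n+1,n),\ker\lambda_{\C^2})$ to $(L(n+1,n),\ker\varphi^*\lambda_{\C^3})$, which is exactly the assertion of Corollary \ref{atlast}.
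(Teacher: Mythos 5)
Your proposal takes a genuinely different route from the paper's, and your opening criticism is well founded: the paper's own proof of Corollary \ref{atlast} is exactly the ``tempting but problematic'' argument you describe. It takes the Moser symplectomorphism $\phi$ of Lemma \ref{moserlem}, observes $\phi^*d(\varphi^*\lambda_{\C^3}) = d\lambda_{\C^2}$, and from this concludes $\phi_*(\ker\lambda_{\C^2}) = \ker\varphi_*\lambda_{\C^3}$ ``on $L(n+1,n)$.'' This leaves unaddressed precisely the points you raise: matching the exact symplectic forms only yields $\phi^*(\tilde\varphi^*\lambda_{\C^3}) = \lambda_{\C^2} + dg$ (since $H^1_{dR}(\C^2\setminus\{\mathbf{0}\})=0$), the Moser map has no reason to carry the fixed hypersurface $S^3$ (or $S^3/A_n$) to itself, and nothing is said about $A_n$-equivariance, which is needed for $\phi$ to descend to the quotient at all; moreover Theorem \ref{moser} is stated for closed manifolds, while Lemma \ref{moserlem} invokes it on the noncompact $\C^2\setminus\{\mathbf{0}\}$. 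Your skeleton---$A_n$-invariance of the interpolation $\lambda_t$, contactness of each restriction $\lambda_t|_{L(n+1,n)}$, then Gray stability on the closed quotient---avoids all of this (you only need the nondegeneracy computation inside the proof of Lemma \ref{moserlem}, not the Moser diffeomorphism itself), and it parallels how the paper handles the other half of Theorem \ref{lenslinkcontacto} via Lemmas \ref{oneparameter}, \ref{technicalphi}, and \ref{graycor}.

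However, as written your proposal has a gap exactly where you flag it: contactness of $\lambda_t|_{L(n+1,n)}$ for $0<t<1$ is the entire mathematical content of this route beyond quoting Gray, and you only announce how you would check it. Symplectic nondegeneracy of $\omega_t$ alone cannot suffice (a primitive of a symplectic form restricts to a contact form on a hypersurface only when the dual Liouville field is transverse to it), and your criterion via $Y_t$ is correct but hard to run, since $Y_t$ involves inverting $\omega_t$. The check is easier in the coefficient picture you already set up, and it does close. Writing $s_1=|u|^2$, $s_2=|v|^2$ with angular coordinates $\theta_1,\theta_2$, one has $\lambda_{\C^2}=s_1\,d\theta_1+s_2\,d\theta_2$ and, with the paper's normalization $\lambda_{\C^3}=\iota_Y\omega_{\C^3}$ (your $A,B$ are too large by a factor of $2$, which is harmless here),
\[
\tilde\varphi^*\lambda_{\C^3} = \left(\tfrac12 s_1s_2 + \tfrac{n+1}{4}s_1^{n+1}\right)d\theta_1 + \left(\tfrac12 s_1s_2 + \tfrac{n+1}{4}s_2^{n+1}\right)d\theta_2,
\]
so that on $S^3=\{s_1+s_2=1\}$, with $x=s_1$ and $F_t(x) = (1-t)x + t\left(\tfrac12 x(1-x) + \tfrac{n+1}{4}x^{n+1}\right)$, the family is $\lambda_t|_{S^3}=F_t(x)\,d\theta_1+F_t(1-x)\,d\theta_2$ and
\[
\lambda_t\wedge d\lambda_t = Q_t\,ds_1\wedge d\theta_1\wedge d\theta_2, \qquad Q_t = F_t(1-x)F_t'(x)+F_t(x)F_t'(1-x).
\]
Setting $y=1-x$ and expanding in $t$,
\[
Q_t=(1-t)^2+t(1-t)\left[\tfrac12(x^2+y^2)+\tfrac{(n+1)^2}{4}xy(x^{n-1}+y^{n-1})+\tfrac{n+1}{4}(x^{n+1}+y^{n+1})\right]+t^2Q_1,
\]
\[
Q_1=\tfrac{(n+1)^2}{8}xy(x^{n}+y^{n})+\tfrac{n+1}{8}(x-y)(x^{n+1}-y^{n+1})+\tfrac{(n+1)^3}{16}x^{n}y^{n},
\]
and every term is nonnegative with strictly positive sum for all $t\in[0,1]$, $x\in[0,1]$ (at $x\in\{0,1\}$ one has $Q_t=F_t(1)F_t'(0)>0$; along the two core circles, where polar coordinates degenerate, a one-line direct evaluation gives $\lambda_t\wedge d\lambda_t\neq0$). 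With this positivity supplied, your $A_n$-invariance observation and Gray stability finish the proof. So: right strategy, sounder in conception than the paper's one-line restriction of the Moser map, but the decisive computation must actually be carried out before the corollary is proved.
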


\begin{proof}
Let $\phi:(\C^2 \setminus \{ \mathbf{0} \},d\lambda_{\C^2})$ and $(\C^2 \setminus \{ \mathbf{0} \}, d\tilde\varphi^*\lambda_{\C^3})$ be the symplectomorphism, which exists by Lemma \ref{moserlem}.  It induces the desired contactomorphism.  On $\C^2 \setminus \{ \mathbf{0} \}$,
\[
\phi^*d(\varphi^*\lambda_{\C^3}) = d\lambda_{\C^2},
\]
thus
\[
d\phi^*(\varphi^*\lambda_{\C^3}) = d\lambda_{\C^2}.
\]
So indeed on $L(n+1,n)$,
\[
\begin{array}{lcl}
\phi_*(\xi_{std}) &=& \phi_*(\ker \lambda_{\C^2}) \\
&=& \ker \varphi_* \lambda_{\C^3} \\
& =& \varphi_* \xi_{std}. \\
\end{array}
\]
\end{proof}
Proposition \ref{propatlast} and Corollary \ref{atlast} complete the proof of Theorem \ref{lenslinkcontacto}. 


\end{document}